\newtheorem{theorem}{Theorem}[section]
\newtheorem{lemma}[theorem]{Lemma}
\newtheorem{corollary}[theorem]{Corollary}
\theoremstyle{definition}
\newtheorem{definition}[theorem]{Definition}
\newtheorem{example}[theorem]{Example}
\theoremstyle{remark}
\newtheorem*{remark}{Remark}
\numberwithin{equation}{section}
\def \BV {{\rm BV}}
\def \Chi {\raise .3ex \hbox{\large $\chi$}}
\def\R{\mathbb{R}}%{{\relax\ifmmode I\!\!R\else$I\!\!R$\fi}}
\def\Z{\mathbb{Z}} %{{\relax\ifmmode Z\!\!\!Z\else$Z\!\!\!Z$\fi}}
\def\N{\mathbb{N}}
\def\argmin{\mathop{\rm argmin}}
\def\Div{{\rm div}}
\def\dist{{\rm dist}}
\def\diam{{\rm diam}}
\def\e{\epsilon}
\title{Multiscale Decompositions and Optimization}
\author{Xiaohui Wang}
\begin{document}

\maketitle

\begin{abstract}
In this thesis,  the following type Tikhonov regularization problem will be systematically studied:
\[(u_t,v_t):=\argmin_{u+v=f}~\{\|v\|_X+t\|u\|_Y\},\]
where $Y$ is a smooth space such as  a $\BV$ space or a Sobolev space and $X$ is the space in which we measure distortion.
Examples of the above problem occur in denoising in image processing,  in numerically treating inverse problems,  and
in the sparse recovery problem of compressed sensing. It is also at the heart of interpolation of linear operators by the real method of interpolation.
We shall characterize the minimizing pair $(u_t,v_t)$ for $(X,Y):=(L_2(\Omega),\BV(\Omega))$ as a primary example and generalize Yves Meyer's result in \cite{Meyer} and Antonin Chambolle's result in \cite{Chambolle}. After that, the following multiscale decomposition scheme will be studied:
\[u_{k+1}:=\argmin_{u\in \BV(\Omega)\cap L_2(\Omega)}~\{\frac{1}{2}\|f-u\|^2_{L_2}+t_{k}|u-u_k|_{\BV}\},\]
where $u_0=0$ and $\Omega$ is a bounded Lipschitz domain in $\R^d$. This method was introduced by Eitan Tadmor et al. and we shall improve the $L_2$ convergence result in \cite{Tadmor}. Other pairs such as $(X,Y):=(L_p,W^{1}(L_\tau))$ and $(X,Y):=(\ell_2,\ell_p)$ will also be mentioned. In the end, the numerical implementation for $(X,Y):=(L_2(\Omega),\BV(\Omega))$ and the corresponding convergence results will be given.

\end{abstract}

%\begin{keywords}
%\end{keywords}
%\begin{AMS}
%\end{AMS}

\section{Introduction: The Importance of Research}
Many problems in optimization and applied mathematics center on decomposing a given function $f$
into a sum of two functions with prescribed properties. Typically, one of these functions is called a {\em good} function $u$ and represents the properties of $f$ we wish to maintain while the second part $v$ represents  error/distortion or noise in the stochastic setting. Examples occur in denoising in image processing,  in numerically treating inverse problems,  and
in the sparse recovery problem of compressed sensing. The general problem of decomposing a function
as a sum of two functions is also at the heart of interpolation of linear operators by the real method of interpolation. My research explores the mathematics behind such decompositions and their numerical implementation.

One can formulate the decomposition problem for any pair of Banach spaces $X$ and $Y$ with $Y$ the space of good functions and $X$ the space in which we measure distortion. Given a real number $t>0$, we consider the minimization problem:
\begin{equation}
\label{Kfunctional}
K(f,t):= \inf_{f=u+v} ~\{\|v\|_X+t\|u\|_Y\}.
\end{equation}
$K(f,t)$ is called the K-functional for the pair $(X,Y)$. The pair $(u_t,v_t)$ which minimizes $K(f,t)$ is the  Tikhonov regularization pair:
\begin{equation}
\label{Tikhonov}
(u_t,v_t):=\argmin_{u+v=f}~\{\|v\|_X+t\|u\|_Y\}.
\end{equation}
One can usually prove (by compactness argument and strict convexity) that there exists a unique solution $(u_t,v_t)$ for problem (\ref{Tikhonov}). As we vary $t$, we obtain different decompositions. These decompositions describe how $f$ sits relative to $X$ and $Y$.
There are many variants of (\ref{Tikhonov}) that are commonly used.  For example, the norm of $Y$ can be replaced by a semi-norm or a quasi-norm and sometimes the norm with respect to $X$ is raised to a power.

While the above formulation can be defined for any pair $(X,Y)$ and any $f$ in $X+Y$,  in applications we are interested in specific pairs. One common setting, and the first  one of interest to me, is when $X=L_p$ and $Y$ is a smooth space such as  a Sobolev space or a BV space. This particular case appears in many problems of image processing, optimization, compression, and encoding. We shall study various questions associated to such decompositions.

The main problems to be investigated in this thesis are:

\begin{description}
\item[(i)~~] Given $f$ and $t>0$, characterize the minimizing pair $(u_t,v_t)$.
\item[(ii)~] Find  an analytic expression for $K(f,t)$ in terms of classical quantities and thereby characterize the interpolation spaces for a given pair $(X,Y)$.
\item[(iii)] Multiscale decompositions corresponding to the pair $(X,Y)$ that can be derived from the characterization of the minimizing pair.
\item[(iv)~] Numerical methods for  computing this decomposition or something close to it.
\end{description}

The structure of this thesis is as following:
\begin{description}
\item[Chapter 1:]Introduction: The Importance of Research.
\item[Chapter 2:]Basic properties of $\BV(\Omega)$ and Hausdorff measure.
\item[Chapter 3:]Decomposition for the pair $(L_2(\Omega),\BV(\Omega))$, where $\Omega$ is a bounded Lipschitz domain in $\R^d$. In this section, we first characterize the minimizing pair $(u_t,v_t)$ by studying the Euler-Lagrange equation associated to (\ref{Tikhonov}) which includes giving an appropriate setting for the boundary condition. We generalize Yves Meyer's result in \cite{Meyer} and Antonin Chambolle's result in \cite{Chambolle} on the properties of $(u_t,v_t)$. Then the expression of $K(f,t)$ follows as a simple consequence. In addition, we propose simpler proofs about characterizing the subdifferential of $\BV$ semi-norm which were first proved in \cite{Andreu}.
\item[Chapter 4:]Multiscale decompositions corresponding to the pair $(L_2,\BV)$. In this section, we study the scheme introduced by Eitan Tadmor et al. under the general framework of {\em Inverse Scale Space Methods} and improve the $L_2$ convergence result in \cite{Tadmor}.
\item[Chapter 5:]Decomposition for $(X,Y):=(L_p,W^{1}(L_\tau))$ with $1/\tau:=1/p+1/d$.
\item[Chapter 6:]Decomposition for $(X,Y):=(\ell_2,\ell_p)$ with $1\leq p<\infty$.
\item[Chapter 7:]Numerical implementation for $(X,Y):=(L_2(\Omega),\BV(\Omega))$ and the corresponding convergence results.
\end{description}

\section{The Space $\BV(\Omega)$ and Hausdorff Measure}
In this section, we will introduce some basic facts about the space $\BV(\Omega)$, where $\Omega$ is a bounded Lipschitz domain.
First, we need to give a definition of the Lipschitz domain.
\begin{definition}[Lipschitz Domain]
An open set $\Omega\subset \R^d$ is a {\em Lipschitz domain} if for any $x_0\in\partial \Omega$ there exists $r>0$ and a Lipschitz function $\Phi:\R^{d-1}\rightarrow \R$ such that~--~upon relabeling and reorienting the coordinates axis~--~we have
\[\Omega\cap B(x_0,r)=\{x\in B(x_0,r):x_d>\Phi(x_1,\dotsc,x_{d-1})\}.\]
\end{definition}
In the following text, without specifically mentioned, we will assume $\Omega$ is a bounded Lipschitz domain in $\R^d$ and use the following notations:
\begin{itemize}
\item $|\Omega|$: Lebesgue measure of $\Omega$.
\item $x:=(x_1,x_2,\dotsc,x_d)$.
\item $|x|:=\sqrt{\sum_{i=1}^{d}x^2_i}$.
\item $\nabla u:=(\frac{\partial u}{\partial {x_1}},\frac{\partial u}{\partial {x_2}},\dotsc,\frac{\partial u}{\partial {x_d}})$.
\item $D^{\alpha}u:=\partial^{\alpha_1}_{x_1} \partial^{\alpha_2}_{x_2} \dotsm \partial^{\alpha_d}_{x_d}u$, where $\alpha:=(\alpha_1,\alpha_2,\dotsc,\alpha_d)$ and $\alpha_i\in \N\cup \{0\}$ for $1\leq i \leq d$.
\end{itemize}

Before introducing the space $\BV(\Omega)$, we need to give definitions of {\em weak derivative} and {\em measure}.

\begin{definition}[Weak Derivative]
Let $u\in L_1(\Omega)$. For a given multi-index $\alpha$, a function $v\in L_1(\Omega)$ is called the $\alpha^{\rm th}$ weak derivative of $u$ if
\[\int_{\Omega}v\phi~dx=(-1)^{|\alpha|}\int_{\Omega}uD^{\alpha}\phi~dx\]
for all $\phi\in C^{\infty}_{0}(\Omega)$. Here $|\alpha|:=\alpha_1+\alpha_2+\dotsb+\alpha_d$.
\end{definition}

\begin{definition}[Measure]
The $\alpha^{\rm th}$ weak derivative of $u$ is called a measure if there exists a regular Borel (signed) measure $\mu$ on $\Omega$ such that
\[\int_{\Omega}\phi ~d\mu=(-1)^{|\alpha|}\int_{\Omega}u D^{\alpha}\phi~dx\]
for all $\phi \in C^{\infty}_{0}(\Omega)$. In addition $|D^{\alpha}u|(\Omega)$ denotes the total variation of the measure $\mu$.
\end{definition}

\begin{definition}
\[\BV(\Omega):=L_1(\Omega)\cap\{u:D^{\alpha}u\mbox{~is~a~measure,~}|D^{\alpha}u|(\Omega)<\infty,|\alpha|=1\}.\]
In addition, the BV semi-norm $|u|_{\BV}$ can be defined as:
\[|u|_{\BV}=\int_{\Omega}|Du|:=\sup~\{\int_{\Omega}u\Div(\phi)~dx:\phi\in C^{\infty}_{0}(\Omega;\R^d),|\phi(x)|\leq 1{\rm~for~}x\in \Omega\}.\]
\end{definition}
\begin{remark}It is easy to see, for $u \in W^{1}(L_1(\Omega))$, $|u|_{\BV}=\int_{\Omega}|\nabla u|~dx$.\end{remark}
\begin{theorem}[Coarea Formula]
Let $u\in \BV(\Omega)$ and define $E_{t}:=\{x\in \Omega:u(x)<t\}$. Then
\[\int_{\Omega}|Du|=\int^{\infty}_{-\infty}dt\int_{\Omega}|D\Chi_{E_t}|.\]
\end{theorem}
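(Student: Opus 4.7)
The plan is to establish the coarea formula by proving the two inequalities separately: the ``$\leq$'' direction via the duality definition of $|u|_\BV$ together with a layer-cake decomposition of $u$, and the ``$\geq$'' direction via smooth approximation and the lower semicontinuity of total variation.

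For the inequality $|u|_\BV \leq \int_{-\infty}^{\infty}|D\Chi_{E_t}|(\Omega)\,dt$, I would take any test vector field $\phi \in C^{\infty}_0(\Omega;\R^d)$ with $|\phi|\leq 1$ and rewrite $u$ pointwise using the layer-cake identity
\[u(x)=\int_0^\infty \Chi_{\{u>t\}}(x)\,dt-\int_{-\infty}^0 \Chi_{\{u<t\}}(x)\,dt.\]
Because $\phi$ has compact support in $\Omega$ we have $\int_\Omega \Div(\phi)\,dx=0$, so after substituting and applying Fubini (which is legal since $u\in L_1(\Omega)$ and $\Div \phi$ is bounded), everything reduces to
\[\int_\Omega u\,\Div(\phi)\,dx = -\int_{-\infty}^{\infty}\!\int_\Omega \Chi_{E_t}\,\Div(\phi)\,dx\,dt.\]
Bounding each inner integral by $|D\Chi_{E_t}|(\Omega)$ and taking the supremum over $\phi$ yields the desired inequality; a preliminary step here is verifying that $t\mapsto |D\Chi_{E_t}|(\Omega)$ is Lebesgue measurable, which follows because it is the supremum of the countable family of measurable maps $t\mapsto \int_\Omega\Chi_{E_t}\,\Div(\phi_n)\,dx$ for a dense countable subfamily $\{\phi_n\}$ of admissible test fields.

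For the reverse inequality $\int_{-\infty}^{\infty}|D\Chi_{E_t}|(\Omega)\,dt\leq |u|_\BV$, I would first settle the smooth case. For $u\in C^\infty(\Omega)\cap W^1(L_1(\Omega))$, Sard's theorem gives that for a.e.\ $t$ the level set $\{u=t\}$ is a smooth hypersurface, and combining the classical smooth coarea formula with the identification $|D\Chi_{E_t}|(\Omega)=\mathcal{H}^{d-1}(\{u=t\}\cap \Omega)$ produces equality. To pass to general $u\in \BV(\Omega)$, I would invoke the standard density result that there exist $u_n\in C^\infty(\Omega)$ with $u_n\to u$ in $L_1(\Omega)$ and $\int_\Omega|\nabla u_n|\,dx\to |u|_\BV$. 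Setting $E_t^{(n)}:=\{u_n<t\}$, Fubini applied to $\int_\Omega|u_n-u|\,dx = \int_{-\infty}^{\infty}|\{u<t\}\triangle\{u_n<t\}|\,dt$ shows that, along a subsequence, $\Chi_{E_t^{(n)}}\to \Chi_{E_t}$ in $L_1(\Omega)$ for a.e.\ $t$. Then lower semicontinuity of total variation gives $|D\Chi_{E_t}|(\Omega)\leq \liminf_n|D\Chi_{E_t^{(n)}}|(\Omega)$ for a.e.\ $t$, and Fatou's lemma combined with the smooth coarea formula applied to each $u_n$ produces
\[\int_{-\infty}^{\infty}|D\Chi_{E_t}|(\Omega)\,dt \leq \liminf_n\int_\Omega|\nabla u_n|\,dx = |u|_\BV.\]

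I expect the main obstacle to be the approximation step in the second half: verifying that $\chi_{E_t^{(n)}}\to \chi_{E_t}$ in $L_1$ for a.e.\ $t$ requires a careful Fubini argument together with the observation that $\{t:|\{u=t\}|>0\}$ is at most countable, and one has to ensure that the good subsequence does not depend on $t$. The smooth-case identification $|D\Chi_{E_t}|(\Omega)=\mathcal{H}^{d-1}(\{u=t\}\cap \Omega)$ also needs justification at $t$ where the level set meets $\partial\Omega$, which is why the Lipschitz hypothesis on $\Omega$ is essential. The layer-cake step in the first half is essentially bookkeeping once measurability is verified.
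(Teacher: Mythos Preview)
Your outline is correct and is in fact the standard two-inequality argument for the BV coarea formula. The paper, however, does not prove this theorem at all: its ``proof'' consists of the single sentence ``See Theorem~1.23 in \cite{Giusti}.'' So there is nothing to compare at the level of ideas --- you have supplied a genuine proof sketch where the paper simply defers to the literature. For what it is worth, your sketch follows exactly the strategy Giusti uses in that reference: the layer-cake/duality bound for ``$\leq$'' and smooth approximation plus lower semicontinuity and Fatou for ``$\geq$''. The obstacles you flag (measurability of $t\mapsto |D\Chi_{E_t}|(\Omega)$, a.e.\ $L_1$-convergence of the level-set indicators along a $t$-independent subsequence) are the real ones, and your indicated resolutions are the standard ones.
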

\begin{proof}
See Theorem 1.23 in \cite{Giusti}.
\end{proof}
\begin{definition}[Hausdorff Measure]
For set $E\subset \R^{d}$, $0\leq k<\infty$ and $0<\delta\leq \infty$, we define
\[H^{\delta}_{k}(E):=\omega_{k}2^{-k}\inf~\{\sum^{\infty}_{j=1}(\diam~S_j)^k:E\subset \mathop{\bigcup}^{\infty}_{j=1}S_{j},\diam~S_{j}<\delta\}\]
and
\[H^{k}(E):=\lim_{\delta\rightarrow 0}H^{\delta}_{k}(E)=\sup_{\delta}H^{\delta}_{k}(E),\]
where $\omega_{k}=\Gamma(\frac{1}{2})^{k}/\Gamma(\frac{k}{2}+1)$, $k\geq 0$, is the Lebesgue measure of the unit ball in $\R^{k}$.\\
$H^k$ is called the $k$-dimensional Hausdorff measure.
\end{definition}
\begin{example}
Suppose $E\subset \Omega$ has $C^{2}$ boundary and consider $\Chi_{E}$ the characteristic function of $E$, then
\[|\Chi_{E}|_{\BV}=H^{d-1}(\partial E\cap \Omega).\]
\end{example}
\begin{proof}
See Example 1.4 in \cite{Giusti}.
\end{proof}

For a bounded Lipschitz domain $\Omega$, the outward unit normal vector $\nu(x):=(\nu_1,\nu_2,...,\nu_d)$ is defined $H^{d-1}$-a.e. on $\partial \Omega$. 
Then we have the following generalized Gauss-Green theorem:
\[\int_\Omega\Div(F)~dx=\int_{\partial \Omega}F(x)\cdot \nu(x)~dH^{d-1}\]
whenever $F\in C^1(\bar{\Omega};\R^d)$. For detailed exposition, please refer to \cite{Ziemer}.
%Give a definition of $\BV(\Omega)$.  State that there are several equivalent definitions.  Make precise your definition of $|u|_{BV}$.

\section{Decomposition for $(X,Y)=(L_2(\Omega),\BV(\Omega))$}
While there are many settings and potential decompositions that we shall discuss,  a particular problem which  is of high interest and is a primary example of the goal of my research is the problem of decomposing a function $f\in L_2(\Omega)$, where $\Omega$ is a bounded Lipschitz domain in ${\R}^d$, into a sum of an $L_2$ function and a $\BV$ function. For any prescribed $t>0$, we define the   pair $(u_t,v_t)$ as the solution of  the following minimization problem:
\begin{equation}
\label{equ:L2-BV-pair}
(u_t,v_t):=\argmin_{u+v=f}~\{\frac{1}{2}\|v\|^2_{L_2}+t |u|_{\BV}\}.
\end{equation}
If we define $T(u):=\frac{1}{2}\|f-u\|^{2}_{L_2}+t|u|_{\BV}$, then problem (\ref{equ:L2-BV-pair}) is equivalent to the following problem:
\begin{equation}
\label{equ:L2-BV}
u_t:=\argmin_{u\in \BV(\Omega)\cap L_2(\Omega)}T(u).
\end{equation}
Problem (\ref{equ:L2-BV}) is closely related to the following constrained minimization problem:
\begin{equation}
\label{constrain-min}
\min_{u\in \BV(\Omega)\cap L_2(\Omega)}~J(u) {\rm~~subject~to~~}\int_\Omega f~dx=\int_\Omega u~dx ~{\rm and}~\|f-u\|_{L_2}=\sigma,
\end{equation}
where $J(u):=|u|_{\BV}$.
It is widely used in  image denoising where it is called Rudin-Osher-Fatemi model for $\Omega\subset \R^2$ (see \cite{Osher1}).    If  $f$ is a  given noisy image, then $u_t$ captures the main features of $f$ and $v_t$ contains the oscillatory patterns of texture or the inherent noise in the image.  In the stochastic setting, a central question is what is the best choice of $t$.

Before introducing our main results, we need to spend a few words on the rigorous definition of the solution of (\ref{equ:L2-BV}).

To derive the Euler-Lagrange equation associated to problem (\ref{equ:L2-BV}), we first consider a special case that $u\in C^1(\bar{\Omega})$ and $\partial \Omega$ is $C^1$. Set $T(u):=\frac{1}{2}\|f-u\|_{L_2}^2+tJ(u)$ with $J(u):=\int_{\Omega}|\nabla u|~dx$. Consider the following minimization problem:
\[\min_{u\in S}T(u),\]
where \[S:=\{u\in C^1(\bar{\Omega})~:~\frac{\partial u}{\partial \nu}=0 {\rm~for~} x \in \partial \Omega {\rm~and~} \nabla u \neq 0 {\rm~for~all~} x \in \Omega\}\]
is the admissible set.

Notice $\nabla(|x|)=\frac{x}{|x|}$ for $x\neq 0$. We can thus calculate the Gateaux derivative of $J(u)$ for $u\in S$. Given $u \in S$, for any $h \in C^1(\bar{\Omega})$ with $\frac{\partial h}{\partial \nu}=0$ on $\partial \Omega$, when $\epsilon$ small enough, we have $u+\epsilon h \in S$. Hence:
\begin{eqnarray*}
\delta J(u;h)&=&\lim_{\epsilon \rightarrow 0}\frac{1}{\epsilon}\{J(u+\epsilon h)-J(u)\}\\
&=&\lim_{\epsilon \rightarrow 0}\frac{1}{\epsilon}\int_{\Omega}\{|\nabla (u+\epsilon h)|-|\nabla u|\}~dx\\
&=&\lim_{\epsilon \rightarrow 0} \frac{1}{\epsilon}\int_{\Omega}\{\nabla(|x|)|_{x=\nabla u}\cdot \epsilon \nabla h+O(\epsilon^2)\}~dx\\
&=&\int_{\Omega} \frac{\nabla u}{|\nabla u|} \cdot \nabla h~dx\\
&=&\int_{\Omega} -\Div(\frac{\nabla u}{|\nabla u|})h~dx+\int_{\Omega}\Div(\frac{\nabla u}{|\nabla u|}h)~dx\\
&=&\int_{\Omega} -\Div(\frac{\nabla u}{|\nabla u|})h~dx+\int_{\partial \Omega}\frac{h}{|\nabla u|}\frac{\partial u}{\partial \nu}~ds\\
&=&\int_{\Omega} -\Div(\frac{\nabla u}{|\nabla u|})h~dx,
\end{eqnarray*}
where $\frac{\partial u}{\partial \nu}:=\nabla u \cdot \nu$.

The reason why we choose Neumann boundary condition is to make $\int_{\Omega}u~dx=\int_{\Omega}f~dx$, which means the error/distortion $v=f-u$ has mean value zero. As we shall show below, $\int_{\Omega}u~dx=\int_{\Omega}f~dx$ will automatically be satisfied when $u$ is a minimizer for problem  (\ref{equ:L2-BV}).

The necessary condition for $u$ to be a minimizer is: $\delta T(u;h)=0$ which means
\[t\delta J(u;h)- \int_\Omega(f-u)h~dx=0\]
for any $h \in C^1(\bar{\Omega})$ with $\frac{\partial h}{\partial \nu}=0$.
Hence, we can {\em informally} write the Euler-Lagrange equation associated to problem (\ref{equ:L2-BV}) as:
\[
\left \{\begin{array}{ll}u-t\Div(\frac{\nabla u}{|\nabla u|})=f&{\rm in}~ \Omega
\\ \frac{\partial u}{\partial \nu}=0&{\rm on}~ \partial \Omega\end{array} \right.
\]

Now we come back to the more general case which $u \in \BV(\Omega)\cap L_2(\Omega)$ with
\[J(u):=|u|_{\BV}=\sup~\{\int_{\Omega}u\Div(\phi)~dx:\phi \in V\},\]
where $V:=\{\phi\in C_{0}^1(\Omega;\R^d):|\phi(x)|\leq 1\mbox{~for~}x\in \Omega\}$.

We can extend the domain of $J(u)$ to $L_2(\Omega)$ in the following way:
\[J(u):=\left \{ \begin{array}{ll}|u|_{\BV}&u\in \BV(\Omega)\cap L_2(\Omega)\\+\infty&u\in L_2(\Omega) \backslash  (\BV(\Omega)\cap L_2(\Omega))\end{array} \right.\]
In this way, we can also define 
\[T(u):=\frac{1}{2}\|f-u\|_{L_2}^2+tJ(u)\]
on $L_2(\Omega)$. In the following text, without specific mention, we will assume $J(u)$ and $T(u)$ defined on the whole space of $L_2(\Omega)$ as above. It is easy to check that $J(u)$ and $T(u)$ defined in this way are proper convex functionals on $L_2(\Omega)$.%
\begin{lemma}
\label{semi-contin}
$J(u)$ is weakly lower semi-continuous with respect to $L_p(1\leq p \leq \infty)$ topology, i.e. if $u_n\rightharpoonup u$ weakly in $L_p$, we have: $J(u)\leq \liminf_{n\rightarrow \infty} J(u_n)$. In addition, if $\liminf_{n\rightarrow \infty} J(u_n)< \infty$, we have $u \in \BV(\Omega)$.
\end{lemma}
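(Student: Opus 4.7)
The natural strategy is to exploit the variational (dual) representation of the $\BV$ seminorm, which expresses $J$ as a pointwise supremum of a family of linear functionals that are continuous under weak $L_p$ convergence; the supremum of lower semi-continuous functionals is lower semi-continuous, and the conclusion follows directly.

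Concretely, I would fix an arbitrary test field $\phi\in V:=\{\phi\in C_{0}^{1}(\Omega;\R^{d}):|\phi(x)|\leq 1\}$ and examine the linear functional $L_\phi(w):=\int_{\Omega}w\,\Div(\phi)\,dx$. Since $\phi$ has compact support in $\Omega$ and $\Omega$ is bounded, $\Div(\phi)$ is a bounded continuous function with compact support; hence $\Div(\phi)\in L_{q}(\Omega)$ for every $q\in[1,\infty]$. This means $L_\phi$ is continuous with respect to weak convergence in $L_{p}$ for $1\leq p<\infty$ (pairing against $L_{p'}$) and with respect to weak-$*$ convergence in $L_{\infty}$ (pairing against $L_{1}$). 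Therefore, whenever $u_{n}\rightharpoonup u$ in the appropriate sense,
\[
L_\phi(u)=\lim_{n\to\infty}L_\phi(u_{n}).
\]
By the definition of $J$, we have $L_\phi(u_n)\leq J(u_n)$ in all cases (the bound is trivial when $J(u_n)=+\infty$ and immediate from the dual formula of $|u_n|_{\BV}$ when $u_n\in\BV(\Omega)\cap L_2(\Omega)$). Passing to $\liminf$ yields $L_\phi(u)\leq \liminf_{n\to\infty}J(u_n)$, and taking the supremum over $\phi\in V$ gives $|u|_{\BV}\leq \liminf_{n\to\infty}J(u_{n})$, which is the desired inequality (noting that the right-hand side upper-bounds $J(u)$ regardless of whether $u\in L_2$, and that when all $u_n\in L_2$ with $u_n\rightharpoonup u$ we also have $u\in L_2$ for $p\geq 2$).

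For the second assertion, suppose $\liminf J(u_{n})<\infty$. The inequality just established gives $|u|_{\BV}<\infty$. Since $\Omega$ is bounded, $L_{p}(\Omega)\subset L_{1}(\Omega)$ for every $p\in[1,\infty]$, so $u\in L_{1}(\Omega)$; combined with finiteness of $|u|_{\BV}$, the definition of the space yields $u\in\BV(\Omega)$. The only real care needed in the whole argument is in separating the reflexive range $1<p<\infty$ from the endpoint cases $p=1$ and $p=\infty$, but because $\Div(\phi)$ lies simultaneously in every $L_q(\Omega)$, no case is actually problematic; the argument is entirely uniform in $p$.
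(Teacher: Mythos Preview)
Your proof is correct and follows essentially the same route as the paper's: fix a test field $\phi\in V$, use weak convergence to pass the linear functional $\int_\Omega u_n\,\Div(\phi)\,dx$ to the limit, bound it by $J(u_n)$, take $\liminf$, then take the supremum over $\phi$. You supply more detail than the paper does (in particular you treat the second assertion about $u\in\BV(\Omega)$ explicitly, which the paper leaves implicit), but the argument is identical in substance.
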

\begin{proof}
For any $\phi\in V$, where  $V:=\{\phi\in C_{0}^1(\Omega;\R^d):|\phi(x)|\leq 1\mbox{~for~}x\in \Omega\}$, we have
\begin{eqnarray*}
\int_{\Omega}u\Div(\phi)~dx&=&\lim_{n\rightarrow \infty}\int_{\Omega}u_n\Div(\phi)~dx\\
&=&\liminf_{n\rightarrow \infty}\int_{\Omega}u_n\Div(\phi)~dx\\
&\leq&\liminf_{n\rightarrow \infty}J(u_n).
\end{eqnarray*}
Take $\phi$ over the set $V$, we get $J(u)\leq \liminf_{n\rightarrow \infty}J(u_n)$.
\end{proof}
Now we give the existence and uniqueness result for problem (\ref{equ:L2-BV}) without invoking the associated Euler-Lagrange equation.
\begin{theorem}
\label{min}
For $t > 0$, there exists a unique minimizer $u_t$ for problem (\ref{equ:L2-BV}). In addition, $u_t$ solves (\ref{constrain-min}) for $\sigma=\|f-u_t\|_{L_2} \leq \|f-\bar{f}\|_{L_2}$ and $\int_{\Omega}u_t~dx=\int_{\Omega}f~dx$, where $\bar{f}=\frac{1}{|\Omega|}\int_{\Omega}f~dx$.
\end{theorem}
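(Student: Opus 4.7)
The plan is to apply the direct method of the calculus of variations: extract a weakly convergent minimizing subsequence, use weak lower semi-continuity on both pieces of $T$ to pass to the limit, and then obtain uniqueness by strict convexity. After that, the three side-claims (mean value, $\sigma$-bound, equivalence with \eqref{constrain-min}) follow from short comparison arguments.

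First I would fix a minimizing sequence $\{u_n\}\subset \BV(\Omega)\cap L_2(\Omega)$ with $T(u_n)\to \inf T$. Because $T(\bar f)=\tfrac12\|f-\bar f\|_{L_2}^2<\infty$, the infimum is finite, and consequently both $\|f-u_n\|_{L_2}$ and $|u_n|_{\BV}$ are uniformly bounded. In particular $\{u_n\}$ is bounded in $L_2(\Omega)$, so by reflexivity we can extract a subsequence (still denoted $u_n$) with $u_n\rightharpoonup u^*$ weakly in $L_2$. The squared $L_2$ distance $u\mapsto \tfrac12\|f-u\|_{L_2}^2$ is weakly lower semi-continuous on $L_2$, and Lemma \ref{semi-contin} gives $J(u^*)\le \liminf_n J(u_n)$ together with $u^*\in \BV(\Omega)$. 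Adding the two inequalities yields $T(u^*)\le \liminf_n T(u_n)=\inf T$, so $u^*$ is a minimizer; set $u_t:=u^*$.

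For uniqueness I would use that $u\mapsto \tfrac12\|f-u\|_{L_2}^2$ is strictly convex while $J$ is convex, so $T$ is strictly convex on $L_2(\Omega)$. If $u_t$ and $\tilde u_t$ were two distinct minimizers, applying strict convexity to their midpoint would contradict minimality. To obtain $\int_\Omega u_t\,dx=\int_\Omega f\,dx$, I would perturb $u_t$ by a constant: since $J(u_t+c)=J(u_t)$ for every $c\in\R$, an explicit expansion gives
\[
T(u_t+c)=T(u_t)-c\int_\Omega(f-u_t)\,dx+\tfrac{c^2}{2}|\Omega|,
\]
and minimality at $c=0$ forces $\int_\Omega(f-u_t)\,dx=0$; equivalently, choosing $c=\tfrac{1}{|\Omega|}\int_\Omega(f-u_t)\,dx$ and noting that $T(u_t+c)<T(u_t)$ unless $c=0$ gives the same conclusion.

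For the bound on $\sigma$, I would test $T$ against the constant $\bar f$: since $J(\bar f)=0$, the inequality $T(u_t)\le T(\bar f)$ immediately gives $\tfrac12\|f-u_t\|_{L_2}^2\le \tfrac12\|f-\bar f\|_{L_2}^2$. Finally, to see that $u_t$ solves the constrained problem \eqref{constrain-min} with $\sigma=\|f-u_t\|_{L_2}$, I would take any admissible competitor $u$ with $\int_\Omega u\,dx=\int_\Omega f\,dx$ and $\|f-u\|_{L_2}=\sigma$; then $T(u_t)\le T(u)$ reduces to $J(u_t)\le J(u)$, exactly the minimality in \eqref{constrain-min}. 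The only subtle step is the initial compactness argument: one must verify that $L_2$ weak convergence suffices to invoke Lemma \ref{semi-contin} (it does, since the lemma handles every $L_p$ weak topology) and that the limit genuinely lies in $\BV\cap L_2$, which is also part of Lemma \ref{semi-contin}.
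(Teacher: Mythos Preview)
Your argument is correct and follows essentially the same route as the paper: a direct-method existence proof via weak $L_2$ compactness together with Lemma~\ref{semi-contin}, uniqueness by strict convexity, and the mean-value and $\sigma$-bound claims by comparing $u_t$ with the constant competitors $u_t+c$ and $\bar f$. Your version is slightly more explicit (e.g.\ the quadratic expansion in $c$ and the verification that $u_t$ solves \eqref{constrain-min}), but there is no substantive difference in method.
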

\begin{proof}
Let ${u_n}$ be a minimizing sequence for $T(u)$. Since $\|u\|_{L_2}\leq \|f\|_{L_2}+\sqrt{2T(u)}$, $\{\|u_n\|_{L_2}\}$ are bounded. Since $L_p$ is reflexive when $1<p<\infty$, there exists a subsequence $\{u_{n_j}\}$ such that $u_{n_j}\rightharpoonup u_t$ weakly in $L_2$. By the weakly lower semi-continuity of $J(u)$ as in lemma \ref{semi-contin}, we have:
\[tJ(u_t)+\frac{1}{2}\|f-u_t\|^2_{L_2}\leq \liminf_{j\rightarrow\infty}~\{tJ(u_{n_j})+\frac{1}{2}\|f-u_{n_j}\|^2_{L_2}\}=\min_{u\in \BV(\Omega)\cap L_2(\Omega)}T(u).\]
Hence $u_t$ solves (\ref{equ:L2-BV}) and the uniqueness of the minimizer follows immediately from strict convexity of $T(u)$.

Suppose $\sigma=\|f-u_t\|_{L_2} > \|f-\bar{f}\|_{L_2}$, then
\[tJ(\bar{f})+\frac{1}{2}\|f-\bar{f}\|^2_{L_2}<\frac{1}{2}\|f-u_t\|^2_{L_2} \leq tJ(u_t)+\frac{1}{2}\|f-u_t\|^2_{L_2},\]
which is contradictory with the definition of the minimizer. So we have  $\sigma=\|f-u_t\|_{L_2} \leq \|f-\bar{f}\|_{L_2}$.

Similarly, if $\int_{\Omega}u_t~dx\neq \int_{\Omega}f~dx$, let $c=\frac{1}{|\Omega|}\int_{\Omega}(f-u_t)~dx$. Then we have $\|f-(u_t+c)\|_{L_2}<\|f-u_t\|_{L_2}$ while $J(u_t+c)=J(u_t)$, which is contradictory with the definition of the minimizer.
\end{proof}
\begin{remark}There is an alternative way to get the existence proof by compactness argument for $\BV(\Omega)$ (See \cite{Acar}).
\end{remark}

To characterize the minimizer of problem (\ref{equ:L2-BV}), we have to come back to the PDE approach. The associated Euler-Lagrange equation can be formally written as:
\begin{equation}
\label{Euler}
\left \{\begin{array}{ll}u-t\Div(\frac{D u}{|D u|})=f&\mbox{in}~ \Omega
\\ \frac{\partial u}{\partial \nu}=0&{\rm on}~ \partial \Omega\end{array} \right.
\end{equation}
To understand this equation correctly, we first need to give a rigorous definition of the boundary condition and the nonlinear operator $-\Div(\frac{D u}{|D u|})$.

\subsection{Definition of Neumann Boundary Condition}
Throughout this section we frequently make use of results shown by Anzellotti in \cite{Anzellotti}.
To define the Neumann boundary condition in the sense of trace, we shall consider the following spaces:
\begin{itemize}
\item $\BV(\Omega)_p:=\BV(\Omega)\cap L_p(\Omega)$.
\item $X(\Omega)_q=\{z \in L_{\infty}(\Omega;\R^d):\Div(z)\in L_q(\Omega)\}$, where $\Div(z)$ is defined in the sense of distribution: $\int_\Omega \Div(z) \phi~dx=-\int_\Omega z\cdot \nabla \phi~dx$ for any $\phi \in C^\infty_0(\Omega)$. Here $\|z\|_{L_{\infty}}:=\|\sqrt{\sum_{i=1}^{d}z^2_i}\|_{L_\infty}$.
\end{itemize}
Set
\[q:=\left \{\begin{array}{ll}\infty & p=1\\ \frac{p}{p-1} & 1<p<\infty\\1 & p=\infty\end{array}\right.\]
If $z \in X(\Omega)_{q}$ and $w \in \BV(\Omega)_p$, we can define the functional $(z,Dw):C^{\infty}_{0}(\Omega)\mapsto \R$ by the formula:
\[\langle (z,Dw), \phi \rangle:=-\int_{\Omega}w\phi \Div(z)~dx-\int_{\Omega}wz\cdot\nabla{\phi}~dx.\]
\begin{theorem}
\label{measure}
The functional $(z,Dw)$ defined above for $z \in X(\Omega)_{q}$ and $w \in \BV(\Omega)_p$ is a Radon measure on $\Omega$, and
\[\int_{\Omega}(z,Dw)=\int_{\Omega}z\cdot\nabla w~dx\]
for $w\in W^{1}(L_1(\Omega))\cap L_{p}(\Omega)$. In addition, we have 
$|\int_B (z,Dw)|\leq \int_B |(z,Dw)|\leq \|z\|_{L_{\infty}}\int_{B} |Dw|$ for any Borel set $B\subset \Omega$.
\end{theorem}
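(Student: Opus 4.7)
The plan is to check first that the pairing $\langle (z,Dw),\phi\rangle$ is well defined on $C_0^\infty(\Omega)$. With $\phi$ compactly supported, $\phi\Div(z)$ lies in $L_q(\Omega)\cap L_\infty(\Omega)$, so $w\phi\Div(z)\in L_1(\Omega)$ by the $L_p$--$L_q$ duality; likewise $wz\cdot\nabla\phi\in L_1(\Omega)$ since $z\in L_\infty$ and $\nabla\phi$ is bounded with compact support. For the smooth case $w\in W^1(L_1(\Omega))\cap L_p(\Omega)$, I would invoke the distributional product rule $\Div(wz)=w\Div(z)+z\cdot\nabla w$, which is valid when one factor is in a Sobolev space and the other in the Anzellotti class $X(\Omega)_q$. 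Substituting
\[\int_\Omega wz\cdot\nabla\phi\,dx=-\int_\Omega\phi\bigl(w\Div(z)+z\cdot\nabla w\bigr)\,dx\]
into the definition of $(z,Dw)$ cancels the $w\Div(z)$ terms and yields the identification $(z,Dw)=z\cdot\nabla w\,dx$ claimed in the theorem.

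For a general $w\in\BV(\Omega)\cap L_p(\Omega)$, I would approximate locally by mollification. Fix an open $U\Subset\Omega$ and a test function $\phi\in C_0^\infty(U)$ with $|\phi|\leq 1$, pick $U'\Subset U$ containing the support of $\phi$, and set $w_n:=\rho_{1/n}\ast w$, smooth on $U'$ for $n$ large, with $w_n\to w$ in $L_p(U')$. The key localized BV estimate is the mollifier inequality $|\nabla w_n|\,dx\leq \rho_{1/n}\ast|Dw|$ in the sense of measures, which gives
\[\int_{U'}|\nabla w_n|\,dx\leq |Dw|(U)\]
once $1/n<\dist(U',\partial U)$. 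By the smooth case already established,
\[|\langle (z,Dw_n),\phi\rangle|=\Bigl|\int_\Omega\phi\,z\cdot\nabla w_n\,dx\Bigr|\leq\|\phi\|_\infty\|z\|_{L_\infty}|Dw|(U),\]
and both terms in the definition of $\langle(z,Dw_n),\phi\rangle$ are continuous in the $L_p(U')$-topology (via the pairings against $\phi\Div(z)\in L_q$ and $z\cdot\nabla\phi\in L_q$), so the same bound passes to the limit and gives $|\langle(z,Dw),\phi\rangle|\leq\|z\|_{L_\infty}|Dw|(U)$. Since this holds for every admissible $\phi\in C_0^\infty(U)$, the Riesz representation theorem identifies $(z,Dw)$ with a signed Radon measure on $\Omega$ whose total variation satisfies $|(z,Dw)|(U)\leq\|z\|_{L_\infty}|Dw|(U)$ on every open set; outer regularity of both sides then extends the bound to arbitrary Borel sets.

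The main obstacle is this localization step. A global strict $\BV$-approximation $|w_n|_{\BV}\to|w|_{\BV}$ would immediately deliver the bound only on all of $\Omega$, whereas the theorem demands the sharper inequality $|(z,Dw)|(B)\leq\|z\|_{L_\infty}|Dw|(B)$ for every Borel $B$. The route above uses the pointwise convolution bound $|\nabla(\rho_\epsilon\ast w)|\leq\rho_\epsilon\ast|Dw|$ together with Fubini to transfer sharpness from $\Omega$ down to the subset $U'$, and it is the verification of this inequality against the full $\BV$ measure $|Dw|$, including its singular part, that is the technical heart of the proof.
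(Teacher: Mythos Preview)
Your argument is correct and follows the same overall strategy as the paper: bound $|\langle(z,Dw),\phi\rangle|$ by $\|\phi\|_\infty\|z\|_{L_\infty}\int_A|Dw|$ for $\phi\in C_0^\infty(A)$ via a smooth approximation of $w$ with local $\BV$-control, then invoke Riesz. The one genuine difference is in how that approximating sequence is produced. The paper cites the Anzellotti--Giusti construction, which furnishes a single sequence $w_n\in C^\infty(\Omega)\cap\BV(\Omega)_p$ with $w_n\to w$ in $L_p(\Omega)$ and $\limsup_n\int_{\bar A\cap\Omega}|Dw_n|\le\int_{\bar A\cap\Omega}|Dw|$ simultaneously for every open $A$; this is a somewhat heavy black box but gives the local estimate immediately. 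You instead mollify locally and rely on the pointwise inequality $|\nabla(\rho_\epsilon*w)|\le\rho_\epsilon*|Dw|$, which is elementary (it follows from Jensen applied to the vector-valued convolution and works against the full measure $|Dw|$, singular part included). Your route is more self-contained; the paper's route avoids having to manage a different $w_n$ for each $U$. Both lead to the same bound on open sets, and your final step of passing to Borel sets by outer regularity is fine since both sides are Radon measures.

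One small remark: your ``product rule'' for the $W^{1}(L_1)\cap L_p$ case is exactly what the paper does in one line, writing $\langle(z,Dw),\phi\rangle=-\int_\Omega w\,\Div(\phi z)\,dx=\int_\Omega\phi\,z\cdot\nabla w\,dx$; the second equality needs a density argument (mollify $w\phi\in W^{1,1}_0\cap L_p$), which you implicitly acknowledge and which the paper leaves unstated.
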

\begin{proof}
%See Theorem 1.5 in \cite{Anzellotti}.
Given $w\in \BV(\Omega)_{p}$, we can find a sequence $\{w_{n}\}\subset C^{\infty}(\Omega)\cap \BV(\Omega)_p$(see \cite{Giusti}) such that:
\[w_{n}\rightarrow w~\mbox{in~}L_{p}(\Omega)~\mbox{and}~\limsup_{n\rightarrow \infty}\int_{\bar{A}\cap \Omega}|Dw_{n}|\leq \int_{\bar{A}\cap \Omega}|Dw|~{\rm for~any~open~set~}A \subset \Omega.\]
Take any $\phi \in C^{\infty}_{0}(A)$ and consider an open set $V$ such that ${\rm supp}(\phi)\subset V \subset\subset A$, then we have
\[\langle (z,Dw_{n}), \phi \rangle=-\int_{V}w_n\Div(\phi z)~dx=\int_{V}\phi z\cdot \nabla w_{n}~dx.\]
So
\[|\langle (z,Dw_{n}), \phi \rangle|\leq \|\phi\|_{L_{\infty}(V)}\|z\|_{L_{\infty}(V)}\int_{V}|\nabla w_{n}|~dx.\]
Taking the limit for $n\rightarrow \infty$, we get
\[|\langle (z,Dw), \phi \rangle|\leq \|\phi\|_{L_{\infty}(V)}\|z\|_{L_{\infty}(V)}\int_{\bar{V}\cap \Omega}|D w|\leq \|\phi\|_{L_\infty}\|z\|_{L_\infty}\int_{A}|Dw|~~\mbox{for any }\phi \in C^{\infty}_{0}(A).\]
So $(z,Dw)$ is a Radon measure and we have 
$|\int_B (z,Dw)|\leq \int_B |(z,Dw)|\leq \|z\|_{L_{\infty}}\int_{B} |Dw|$ for any Borel set $B\subset \Omega$.

Since $\langle (z,Dw), \phi \rangle=-\int_{\Omega}w\Div(\phi z)~dx=\int_{\Omega}\phi z\cdot \nabla w~dx$ for any $w\in W^{1}(L_1(\Omega))\cap L_p(\Omega)$ and $\phi\in C^{\infty}_{0}(\Omega)$,
we get $\int_{\Omega}(z,Dw)=\int_{\Omega}z\cdot \nabla w~dx$ for $w\in W^{1}(L_1(\Omega))\cap L_{p}(\Omega)$.
\end{proof}
\begin{theorem}
\label{dense}
Let $z \in X(\Omega)_{q}$, $w \in \BV(\Omega)_p$ and $(z,Dw)$ be defined as in Theorem \ref{measure}. Then there exists a sequence $\{w_n\}^{\infty}_{n=0}\subset C^{\infty}(\Omega)\cap\BV(\Omega)_p$ such that
\[w_{n}\rightarrow w \mbox{~in~}L_{p}(\Omega)\mbox{~~and~~}\int_{\Omega}(z,Dw_n)\rightarrow \int_{\Omega}(z,Dw).\]
\end{theorem}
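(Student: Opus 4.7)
The plan is to invoke the standard smooth approximation in $\BV$ (the Meyers--Serrin type result already cited from Giusti in the proof of Theorem \ref{measure}) to produce $\{w_n\} \subset C^\infty(\Omega) \cap \BV(\Omega)_p$ with $w_n \to w$ in $L_p(\Omega)$ and strict convergence of total variations, $\int_\Omega |Dw_n| \to \int_\Omega |Dw|$. For such smooth $w_n$, Theorem \ref{measure} already identifies $\int_\Omega (z, Dw_n) = \int_\Omega z \cdot \nabla w_n\, dx$, so the remaining task is to show that this sequence of scalars converges to the total mass $\int_\Omega (z, Dw)$ of the Radon measure produced by Theorem \ref{measure}.

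To get past the fact that the pairing $(z,Dw)$ is only defined a priori by its action on $C_0^\infty(\Omega)$, I would fix an exhaustion $\Omega_1 \subset\subset \Omega_2 \subset\subset \cdots \subset \Omega$ with $\bigcup_k \Omega_k = \Omega$, together with cutoffs $\phi_k \in C_0^\infty(\Omega)$ satisfying $\phi_k \equiv 1$ on $\Omega_k$ and $0 \leq \phi_k \leq 1$, and decompose
\[
\int_\Omega (z, Dw_n) = \langle (z, Dw_n), \phi_k \rangle + \int_\Omega (1-\phi_k)(z, Dw_n),
\]
together with the analogous decomposition for $w$. For each fixed $k$, the defining formula
\[
\langle (z, Dw_n), \phi_k \rangle = -\int_\Omega w_n \phi_k \Div(z)\, dx - \int_\Omega w_n\, z \cdot \nabla \phi_k\, dx
\]
converges to $\langle (z, Dw), \phi_k \rangle$ as $n \to \infty$, because $\phi_k \Div(z)$ and $z \cdot \nabla \phi_k$ are fixed $L_q(\Omega)$ functions and $w_n \to w$ in $L_p(\Omega)$.

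The hard step is the uniform control of the two tail integrals. From the Borel bound in Theorem \ref{measure},
\[
\left| \int_\Omega (1-\phi_k)(z, Dw_n) \right| \leq \|z\|_{L_\infty} \int_{\Omega \setminus \Omega_k} |Dw_n|,
\]
and similarly for $w$. Applying Lemma \ref{semi-contin} on the open set $\Omega_k$ gives $\int_{\Omega_k} |Dw| \leq \liminf_n \int_{\Omega_k} |Dw_n|$, which combined with the strict convergence on all of $\Omega$ yields
\[
\limsup_n \int_{\Omega \setminus \Omega_k} |Dw_n| \leq \int_\Omega |Dw| - \int_{\Omega_k} |Dw| = \int_{\Omega \setminus \Omega_k} |Dw|,
\]
and the right-hand side tends to $0$ as $k \to \infty$ since $|Dw|$ is a finite Radon measure on $\Omega$. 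Given $\e > 0$, I would first pick $k$ so that $\|z\|_{L_\infty} \int_{\Omega \setminus \Omega_k} |Dw| < \e/3$, controlling the $w$-tail and, for $n$ sufficiently large, the $w_n$-tail as well; then pick $n$ even larger so that $|\langle (z, Dw_n), \phi_k \rangle - \langle (z, Dw), \phi_k \rangle| < \e/3$. Combining the three estimates gives $\int_\Omega (z, Dw_n) \to \int_\Omega (z, Dw)$, and the whole argument pivots on the strict convergence of total variations supplied by the $\BV$ smooth approximation, which is exactly the ingredient not present in bare $L_p$ convergence.
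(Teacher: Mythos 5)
Your proof is correct and follows essentially the same route as the paper's: both arguments fix a cutoff $g$ (your $\phi_k$) equal to $1$ on a large compact piece of $\Omega$, pass to the limit in the interior term using only $L_p$ convergence of $w_n$ against the fixed $L_q$ data $\phi_k\Div(z)$ and $z\cdot\nabla\phi_k$, and control the two tail terms by $\|z\|_{L_\infty}$ times the residual total variation near $\partial\Omega$. The only (harmless) difference is that the paper takes the localized bound $\limsup_n\int_{\Omega\setminus A}|\nabla w_n|\,dx\le\int_{\Omega\setminus A}|Dw|$ directly as a property of the Giusti mollification, whereas you rederive it from global strict convergence of the total variations together with lower semicontinuity on the interior set.
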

\begin{proof}
%See Lemma 1.8 and Lemma 5.2 in \cite{Anzellotti}.
For any $\epsilon > 0$, take an open set $A \subset \Omega$ such that
\[\int_{\Omega\setminus A}|Dw|<\epsilon,\]
and let $g\in C^{\infty}_{0}(\Omega)$ be such that $0\leq g(x) \leq 1$ in $\Omega$ and $g(x)\equiv 1$ in $A$. We can find a sequence $\{w_{n}\}\subset C^{\infty}(\Omega)\cap \BV(\Omega)_p$(see \cite{Giusti}) such that:
\[w_{n}\rightarrow w~\mbox{in~}L_{p}(\Omega)~\mbox{and}~\limsup_{n\rightarrow \infty}\int_{\Omega\setminus A}|\nabla w_{n}|~dx\leq \int_{ \Omega\setminus A}|Dw|.\]
Then
\begin{eqnarray*}|\int_{\Omega}(z,Dw_n)-\int_{\Omega}(z,Dw)|&\leq& |\langle (z,Dw_n),g\rangle-\langle (z,Dw),g\rangle|\\
&~&+\int_{\Omega}|(z,Dw_n)|(1-g)+\int_{\Omega}|(z,Dw)|(1-g),
\end{eqnarray*}
where
\[\lim_{n\rightarrow \infty}\langle (z,Dw_n),g\rangle =\langle (z,Dw),g\rangle,\]
\[\limsup_{n\rightarrow \infty}\int_{\Omega}|(z,Dw_n)|(1-g)\leq \|z\|_{L_{\infty}}\limsup_{n\rightarrow \infty}\int_{\Omega\setminus A}|Dw_n|<\epsilon \|z\|_{L_{\infty}},\]
\[\int_{\Omega}|(z,Dw)|(1-g)\leq \int_{\Omega\setminus A}|(z,Dw)|<\epsilon \|z\|_{L_{\infty}}.\]
So the theorem is proved, as $\epsilon$ is arbitrary.
\end{proof}
\begin{theorem}
\label{trace}
There exists a linear operator $\gamma:X(\Omega)_{q}\mapsto L_{\infty}(\partial \Omega)$ such that
\begin{enumerate}
\item $\|\gamma(z)\|_{L_{\infty}(\partial \Omega)}\leq \|z\|_{L_{\infty}(\Omega)}$.
\item $\gamma(z)(x)=z(x)\cdot \nu(x)$~$H^{d-1}$-a.e.~on $\partial\Omega$~for~$z\in C^1(\bar{\Omega};\R^d)$.
\item $\langle z, w \rangle_{\partial \Omega}:=\int_{\Omega}w\Div(z)~dx+\int_{\Omega}(z,Dw)=\int_{\partial \Omega}\gamma(z)tr(w)~dH^{d-1}$~~for~any~$w\in\BV(\Omega)_p$, where $tr(w)\in L_1(\partial \Omega)$ is the trace of $w$ on $\partial \Omega$.
\end{enumerate}
\end{theorem}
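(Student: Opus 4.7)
The strategy is to take item 3 as a defining relation and build $\gamma(z)$ as a bounded linear functional on $L_1(\partial \Omega)$ via the duality $L_\infty(\partial\Omega)=L_1(\partial\Omega)^{\ast}$. For $\phi$ in the dense subspace $L_1(\partial\Omega)\cap L_\infty(\partial\Omega)$, I will pick any extension $w\in W^1(L_1(\Omega))\cap L_\infty(\Omega)$ with $tr(w)=\phi$ (bounded extensions exist by Gagliardo's surjectivity of the trace followed by truncation, and such $w$ lie in $L_p(\Omega)$ because $\Omega$ is bounded), and set
\[\Lambda_z(\phi):=\int_{\Omega}w\,\Div(z)\,dx+\int_{\Omega}z\cdot \nabla w\,dx,\]
both integrals being finite by H\"older since $\Div(z)\in L_q$ and $z\in L_\infty$. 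The first step is to check independence of the chosen extension: if $w_1,w_2$ both extend $\phi$, their difference lies in $W^1_0(L_1(\Omega))\cap L_\infty(\Omega)$ and can be approximated by $\varphi_n\in C^\infty_0(\Omega)$ simultaneously in $L_p$ and in $W^1(L_1)$; for each $\varphi_n$ the identity $\int_\Omega\varphi_n\Div(z)\,dx=-\int_\Omega z\cdot\nabla\varphi_n\,dx$ is exactly the definition of $\Div(z)$, and passing to the limit yields independence.

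The heart of the argument is the estimate $|\Lambda_z(\phi)|\leq \|z\|_{L_\infty}\|\phi\|_{L_1(\partial\Omega)}$. I will prove it by constructing, for each $\epsilon>0$, a special extension $w_\epsilon$ of $\phi$ supported in the collar $\{x\in\Omega:\dist(x,\partial\Omega)<\epsilon\}$ and satisfying $\|w_\epsilon\|_{L_p}<\epsilon$ together with $\int_\Omega|\nabla w_\epsilon|\,dx\leq(1+\epsilon)\|\phi\|_{L_1(\partial\Omega)}$. Substituting into the formula for $\Lambda_z(\phi)$ yields $|\Lambda_z(\phi)|\leq \epsilon\|\Div(z)\|_{L_q}+(1+\epsilon)\|z\|_{L_\infty}\|\phi\|_{L_1(\partial\Omega)}$, and $\epsilon\downarrow 0$ gives the bound. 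By density $\Lambda_z$ then extends to a bounded linear functional on $L_1(\partial\Omega)$ with norm at most $\|z\|_{L_\infty}$, and I set $\gamma(z):=\Lambda_z\in L_\infty(\partial\Omega)$; this is item 1.

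Items 2 and 3 follow from the construction. For item 2, if $z\in C^1(\bar{\Omega};\R^d)$ then the classical Gauss--Green formula applied to any smooth extension $w$ of $\phi$ reduces $\Lambda_z(\phi)$ to $\int_{\partial\Omega}(z\cdot\nu)\phi\,dH^{d-1}$, forcing $\gamma(z)=z\cdot\nu$ a.e.\ on $\partial\Omega$. For item 3, the identity is immediate when $w\in W^1(L_1(\Omega))\cap L_p(\Omega)$, using $\int_\Omega(z,Dw)=\int_\Omega z\cdot\nabla w\,dx$ from Theorem \ref{measure} together with the definition of $\Lambda_z$. To extend it to general $w\in\BV(\Omega)_p$, I will invoke a trace-preserving Meyer--Serrin type approximation producing $w_n\in C^\infty(\Omega)\cap W^1(L_1(\Omega))\cap L_p(\Omega)$ with $tr(w_n)=tr(w)$, $w_n\to w$ in $L_p$, and $\int_\Omega(z,Dw_n)\to\int_\Omega(z,Dw)$ (compare Theorem \ref{dense}). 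Passing to the limit in all three terms of item 3 completes the proof.

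The main obstacle is the boundary-collar construction used in the boundedness estimate: for each $\phi\in L_1(\partial\Omega)\cap L_\infty(\partial\Omega)$ I must produce an extension whose $L_1$ gradient cost is essentially $\|\phi\|_{L_1(\partial\Omega)}$ and whose $L_p$ mass is arbitrarily small. This is done chart by chart using the Lipschitz parametrizations from the definition of a Lipschitz domain, pulling back to a half-space, and multiplying $\phi$ by a tent cutoff of height $\epsilon$ in the normal direction; the estimate genuinely depends on the Lipschitz geometry of $\partial\Omega$. A secondary subtlety is that the smoothing in item 3 must preserve boundary traces, which ordinary mollification does not, so the standard trace-preserving refinement of the Meyer--Serrin theorem on Lipschitz domains is required.
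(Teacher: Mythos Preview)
Your proposal is correct and follows essentially the same route as the paper: define the boundary pairing via the defining identity in item~3, prove it depends only on the trace, establish the bound $|\langle z,w\rangle_{\partial\Omega}|\le\|z\|_{L_\infty}\|tr(w)\|_{L_1(\partial\Omega)}$ by replacing $w$ with a Gagliardo-type collar extension supported near $\partial\Omega$, and then invoke the $L_1(\partial\Omega)^\ast=L_\infty(\partial\Omega)$ duality. The only organizational difference is that the paper defines $\langle z,w\rangle_{\partial\Omega}:=\int_\Omega w\,\Div(z)\,dx+\int_\Omega(z,Dw)$ directly for all $w\in\BV(\Omega)_p$ (using the measure pairing of Theorem~\ref{measure}), so item~3 is the definition and no final approximation step is needed; you instead start on the dense subspace of bounded boundary data with $W^1(L_1)\cap L_\infty$ extensions and must close with a trace-preserving strict approximation of $\BV$ functions to recover item~3 in full generality.
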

\begin{proof}
%See Theorem 1.2 and Theorem 1.9 in \cite{Anzellotti}.
Let $\langle z, w \rangle_{\partial \Omega}:=\int_{\Omega}w\Div(z)~dx+\int_{\Omega}(z,Dw)$, first we want to show 
$\langle z, w_1 \rangle_{\partial \Omega}=\langle z, w_2 \rangle_{\partial \Omega}$ for any \(tr(w_1)=tr(w_2)\) and $w_1,w_2\in \BV(\Omega)_{p}$.
We can find a sequence of functions $\{g_n\}\subset C^{\infty}_{0}(\Omega)$ such that $g_n\rightarrow w_1-w_2$ in $L_p$ and $\int_{\Omega}(z,Dg_n)\rightarrow \int_{\Omega}(z,D(w_1-w_2))$. Then we have:
\begin{eqnarray*}\langle z, w_1-w_2 \rangle_{\partial \Omega}&=&\int_{\Omega}(w_1-w_2)\Div(z)~dx+\int_{\Omega}(z,D(w_1-w_2))\\
&=&\lim_{n\rightarrow \infty}~\{\int_{\Omega}g_n\Div(z)~dx+\int_{\Omega}(z,Dg_n)\}=0.
\end{eqnarray*}
So $\langle z,w_1 \rangle_{\partial \Omega}=\langle z, w_2\rangle_{\partial \Omega}$.

Now we want to show $|\langle z,w \rangle_{\partial \Omega}|\leq \|z\|_{L_{\infty}}\int_{\partial \Omega}|tr(w)|~dH^{d-1}$.
For bounded Lipschitz domain $\Omega$, given any $u\in L_1(\partial \Omega)$ and $\epsilon>0$, we can find a function $w\in W^{1}(L_{1}(\Omega))$ such that
\[tr(w)=u,~~\int_{\Omega}|\nabla w|~dx\leq \int_{\partial \Omega}|u|~dH^{d-1}+\epsilon,~~w(x)=0~\mbox{for}~x\in \Omega_{\e},\]
where $\Omega_{\epsilon}:=\{x\in \Omega:\dist(x,\partial \Omega)>\epsilon\}$. Then for any $tr(w)\in L_{1}(\Omega)$, we can find $\hat{w}\in W^{1}(L_1(\Omega))$ such that $tr(\hat{w})=tr(w)$ with the above properties. So
\begin{eqnarray*}
|\langle z,w\rangle_{\partial \Omega}|&=&|\langle z,\hat{w}\rangle_{\partial \Omega}|\\
&\leq& |\int_{\Omega}\hat{w}\Div(z)~dx|+\|z\|_{L_{\infty}}\int_{\Omega}|D\hat{w}|\\
&\leq& |\int_{\Omega\setminus \Omega_{\epsilon}}\hat{w}\Div(z)~dx|+\|z\|_{L_{\infty}}\{\int_{\partial \Omega}|tr(w)|~dH^{d-1}+\epsilon\}.
\end{eqnarray*}
Since $\lim_{\epsilon\rightarrow 0}\int_{\Omega\setminus \Omega_{\epsilon}}\hat{w}\Div(z)~dx=0$, let $\epsilon$ goes to 0, we get
\begin{equation}
\label{trace-bound}
|\langle z,w\rangle_{\partial \Omega}|\leq \|z\|_{L_{\infty}}\int_{\partial \Omega}|tr(w)|~dH^{d-1}.
\end{equation}

Now given a fixed $z\in X(\Omega)_q$, we can define the linear functional $F_{z}:L_{1}(\partial \Omega)\mapsto \R$ by
\[F_{z}(u):=\langle z, w\rangle_{\partial \Omega},\]
where $tr(w)=u$. From (\ref{trace-bound}), we know $|F_{z}(u)|\leq \|z\|_{L_\infty(\Omega)}\|u\|_{L_1(\partial \Omega)}$. By Rieze Representation theorem, there exists $\gamma(z)\in L_{\infty}(\partial \Omega)$ such that
\[F_{z}(u)=\int_{\partial \Omega}\gamma(z)u~dH^{d-1}.\]
So $\langle z,w\rangle_{\partial \Omega}=\int_{\partial \Omega}\gamma(z)tr(w)~dH^{d-1}$ and $\|\gamma(z)\|_{L_{\infty}(\partial \Omega)}\leq \|z\|_{L_{\infty}(\Omega)}$.

When $z\in C^{1}(\bar{\Omega};\R^{d})$, $\langle z, w\rangle_{\partial \Omega}=\int_{\Omega}\Div(wz)~dx=\int_{\partial \Omega}tr(w)z\cdot \nu~dH^{d-1}$. So $\gamma(z)=z\cdot \nu~H^{d-1}$-a.e. on $\partial \Omega$.
\end{proof}
Thus the function $\gamma(z)$ is a weakly defined trace on $\partial \Omega$ of the normal component of $z$, we shall denote $\gamma(z)$ by $[z,\nu]$. In this way, the Neumann boundary condition can be expressed as $[z,\nu]=0~H^{d-1}$-a.e. on $\partial \Omega$.
\subsection{Definition of the Operator $-\Div(\frac{D u}{|D u|})$}
Let $A: X\mapsto 2^{X^*}$ be a \emph{multivalued mapping} defined on a Banach space $X$, i.e., $A$ assigns to each point $u\in X$ a subset $Au$ of $X^*$, where $X^*$ is the dual space of $X$. In this paper we will simply call such mapping an \emph{operator}.
\begin{enumerate}
\item The set $D(A):=\{u\in X: Au\neq \varnothing\}$ is called the \emph{effective domain} of $A$. When $D(A)\neq \emptyset$, we say $A$ is \emph{proper}.
\item The set $R(A):=\bigcup_{u\in X}Au$ is called the \emph{range} of $A$.
\item The set $G(A):=\{(u,v)\in X\times Y:u\in D(A),v\in Au\}$ is called the \emph{graph} of $A$. In this paper, we briefly write $(u,v)\in A$ instead of $(u,v)\in G(A)$ and we will identify an operator $A$ with its graph $G(A)$.
\item An operator $A$ is called a \emph{monotone operator}, if $\langle v_1-v_2,u_1-u_2 \rangle \geq 0$ for $(u_1,v_1),(u_2,v_2)\in A$.
\item A monotone operator $A$ is called a \emph{maximal monotone operator}, if for any monotone operator $B$ that $A\subset B$, we have $A=B$.
\end{enumerate}
\begin{lemma}
\label{nonexpansive}
Let $X$ be a Hilbert space and $A:D(A)\subset X \mapsto 2^X$. If $A$ is a monotone operator, then for any $\lambda>0$, $(I+\lambda A)^{-1}:X \mapsto D(A)$ is nonexpansive, i.e. for $v_1,v_2\in D((I+\lambda A)^{-1})$, we have $\|(I+\lambda A)^{-1}(v_1)-(I+\lambda A)^{-1}(v_2)\|_{X}\leq \|v_1-v_2\|_{X}$.
\end{lemma}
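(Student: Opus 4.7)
The plan is to unwind the definition of $(I+\lambda A)^{-1}$ and apply the monotonicity inequality together with Cauchy--Schwarz. First I would translate the resolvent condition into the language of the graph: for $u_i := (I+\lambda A)^{-1}(v_i)$, the relation $v_i \in u_i + \lambda A u_i$ is equivalent to $(u_i,\, \lambda^{-1}(v_i - u_i)) \in A$ for $i=1,2$.

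Next I would apply the definition of monotonicity to these two pairs. This yields
\[\bigl\langle \lambda^{-1}(v_1 - u_1) - \lambda^{-1}(v_2 - u_2),\; u_1 - u_2 \bigr\rangle \geq 0,\]
which after multiplying by $\lambda > 0$ rearranges to
\[\langle v_1 - v_2,\; u_1 - u_2 \rangle \geq \|u_1 - u_2\|_X^2.\]
Applying Cauchy--Schwarz to the left-hand side gives $\|v_1 - v_2\|_X \, \|u_1 - u_2\|_X \geq \|u_1 - u_2\|_X^2$, and dividing by $\|u_1 - u_2\|_X$ (the case $u_1 = u_2$ being trivial) yields the claimed nonexpansiveness $\|u_1 - u_2\|_X \leq \|v_1 - v_2\|_X$.

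Before concluding I would note that the same inequality shows $(I+\lambda A)^{-1}$ is actually single-valued, so that writing it as a map from its domain into $D(A)$ makes sense: if $u_1$ and $u_2$ are two candidates for $(I+\lambda A)^{-1}(v)$ with $v_1 = v_2 = v$, then the inequality forces $\|u_1 - u_2\|_X^2 \leq 0$, hence $u_1 = u_2$. The only subtle point in the argument is making sure that taking $v_i \in D((I+\lambda A)^{-1})$ genuinely guarantees the existence of $u_i \in D(A)$ with $(u_i, \lambda^{-1}(v_i-u_i)) \in A$, but this is immediate from the definition of the inverse of a graph, so there is no real obstacle here; the entire proof is an exercise in chasing the definitions of monotonicity, inverse, and resolvent.
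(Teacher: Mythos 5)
Your proof is correct and follows essentially the same route as the paper: both unwind the resolvent into the graph condition $\lambda^{-1}(v_i-u_i)\in A(u_i)$ and invoke monotonicity of $A$. The only cosmetic difference is that the paper expands $\|v_1-v_2\|_X^2$ directly and discards the nonnegative terms, whereas you derive the firm-nonexpansiveness inequality $\langle v_1-v_2,u_1-u_2\rangle\geq\|u_1-u_2\|_X^2$ and finish with Cauchy--Schwarz; your added observation that this also gives single-valuedness of the resolvent is a nice (if not strictly required) touch.
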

\begin{proof}
Let $v_1\in(I+\lambda A)(u_1)$ and $v_2\in(I+\lambda A)(u_2)$. Then $w_1=\frac{1}{\lambda}(v_1-u_1)\in A(u_1)$ and $w_2=\frac{1}{\lambda}(v_2-u_2)\in A(u_2)$. By the fact $\langle w_1-w_2,u_1-u_2 \rangle \geq 0$,  we have:
\begin{eqnarray*}\|v_1-v_2\|^2_{X}&=&\langle (I+\lambda A)(u_1)-(I+\lambda A)(u_2),(I+\lambda A)(u_1)-(I+\lambda A)(u_2) \rangle\\
&=&\|u_1-u_2\|^2_{X}+\lambda^2\|w_1-w_2\|^2_{X}+2\lambda \langle w_1-w_2,u_1-u_2 \rangle \\
&\geq&\|u_1-u_2\|^2_{X}.
\end{eqnarray*}
\end{proof}
Now we introduce the following operator $\mathcal{A}$ on $L_2(\Omega)$:
\begin{definition}
$v\in \mathcal{A}(u)$ means:
\[\begin{array}{l}
\exists~z\in X(\Omega)_2 \mbox{~with~} \|z\|_{L_\infty} \leq 1, v=-\Div(z) \mbox{~in~} \mathcal{D'}(\Omega) \mbox{~such that}\\
\int_\Omega (\phi-u)v~dx=\int_\Omega (z,D\phi)-|u|_{\BV} \mbox{~for any~} \phi\in \BV(\Omega)\cap L_2(\Omega).
\end{array}\]
\end{definition}
\noindent Let's recall the set $S$ that we used for deriving Euler-Lagrange Equation:
\[S:=\{u\in C^1(\bar{\Omega})~:~\frac{\partial u}{\partial \nu}=0 \mbox{~for~} x \in \partial \Omega \mbox{~and~} \nabla u \neq 0 \mbox{~for all~} x \in \Omega\}.\]
For $u\in S$, $\mathcal{A}(u)=-\Div(\frac{\nabla u}{|\nabla u|})$. Hence the operator $\mathcal{A}$ can be viewed as generalization of $-\Div(\frac{\nabla u}{|\nabla u|})$. \emph{Formally}, we can write $\mathcal{A}(u)=-\Div(\frac{D u}{|D u|})$ for $u\in \BV(\Omega)\cap L_2(\Omega)$.

To associate the operator $\mathcal{A}$ with our minimization problem (\ref{equ:L2-BV}), we need to introduce the concept about \emph{subdifferential}. Let $L:X\mapsto [-\infty,+\infty]$ be a functional on a real Banach space $X$. The functional $u^*$ in $X^*$ is called a \emph{subgradient} of $L$ at the point $u$ if and only if $L(u)\neq \pm \infty$ and
\[L(w)\geq L(u)+\langle u^*, w-u \rangle \mbox{~~for any } w\in X.\]
For each $u\in X$, the set:
\[\partial L(u):=\{u^* \in X^*:u^* \mbox{~is a subgradient of~} L \mbox{~at~}u\}\]
is called the \emph{subdifferential} of $L$ at $u$. Thus $\partial L$ is a multivalued mapping defined on $X$ and $\partial L(u)=\emptyset$ if $L(u)=\pm \infty$.
\begin{theorem}
\label{semi}
Let $L:X\mapsto (-\infty,+\infty]$ be a proper convex and lower semi-continuous functional on the real Banach space $X$, then the subdifferential $\partial L:X\mapsto 2^{X^*}$ is maximal monotone.
\end{theorem}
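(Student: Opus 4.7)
The plan is to treat monotonicity and maximality separately. Monotonicity is immediate: if $(u_1,u_1^*),(u_2,u_2^*)\in G(\partial L)$, the defining inequalities $L(u_2)\geq L(u_1)+\langle u_1^*,u_2-u_1\rangle$ and $L(u_1)\geq L(u_2)+\langle u_2^*,u_1-u_2\rangle$ add to yield $\langle u_1^*-u_2^*,u_1-u_2\rangle\geq 0$. For maximality I would follow Rockafellar's classical strategy: assume $(u_0,u_0^*)\in X\times X^*$ satisfies $\langle u^*-u_0^*,u-u_0\rangle\geq 0$ for every $(u,u^*)\in G(\partial L)$, and show $u_0^*\in\partial L(u_0)$. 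Replacing $L$ by $\widetilde{L}(w):=L(w+u_0)-\langle u_0^*,w\rangle$ — which remains proper, convex, and lsc, with subdifferential equal to the obvious translate of $\partial L$ — reduces the task to proving: if $\langle u^*,u\rangle\geq 0$ for every $(u,u^*)\in G(\partial\widetilde{L})$, then $\widetilde{L}(w)\geq\widetilde{L}(0)$ for all $w$.

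The analytic workhorse is the Br\o{}ndsted--Rockafellar theorem, itself a consequence of Ekeland's variational principle: whenever $\widetilde{L}(w)\geq\widetilde{L}(u)+\langle u^*,w-u\rangle-\varepsilon$ for every $w$, then for any $\lambda>0$ there exist $\bar u\in X$ and $\bar u^*\in\partial\widetilde{L}(\bar u)$ with $\|\bar u-u\|\leq\varepsilon/\lambda$ and $\|\bar u^*-u^*\|_{X^*}\leq\lambda$. My plan is: at an arbitrary target $w_0$, apply Ekeland's principle to an approximate minimizer of $w\mapsto\widetilde{L}(w)+\frac{1}{2\lambda}\|w-w_0\|^2$ to extract an element of the $\varepsilon$-subdifferential of $\widetilde{L}$ at some point near $w_0$; promote it via Br\o{}ndsted--Rockafellar to an exact pair $(\bar u,\bar u^*)\in G(\partial\widetilde{L})$; feed this pair into the monotonicity hypothesis $\langle\bar u^*,\bar u\rangle\geq 0$ and combine with the subgradient inequality $\widetilde{L}(0)\geq\widetilde{L}(\bar u)+\langle\bar u^*,-\bar u\rangle$; then let $\varepsilon\to 0$ and tune $\lambda$ so that the comparison survives and forces $\widetilde{L}(w_0)\geq\widetilde{L}(0)$.

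The principal obstacle is precisely this Br\o{}ndsted--Rockafellar step. In a Hilbert space one could bypass it entirely by verifying $R(I+\partial L)=X$ through the resolvent mapping (the pattern underlying Lemma \ref{nonexpansive}) and then invoking Minty's surjectivity criterion. Outside the Hilbert setting there is no inner product, no canonical nonexpansive resolvent, and no automatic reflexivity, so one must instead rely on Ekeland's principle to manufacture exact subgradients from approximate ones. The delicate bookkeeping is controlling \emph{simultaneously} the position $\bar u$, the functional value $\widetilde{L}(\bar u)$, and the dual error $\|\bar u^*-u^*\|_{X^*}$ so that, after the approximate subgradient has been upgraded to an exact one, the monotonicity hypothesis still carries enough information through the limit to pin down the value of $\widetilde{L}$ at $0$.
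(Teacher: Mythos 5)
The paper does not actually prove this theorem: its ``proof'' is the citation to Rockafellar's 1970 paper, and your outline is essentially a reconstruction of that cited argument (translation to reduce to the origin, then Ekeland/Br\o{}ndsted--Rockafellar to manufacture exact subgradient pairs that can be tested against the monotonicity hypothesis). The monotonicity half of your proposal is complete and correct as written, and the tools you name for maximality are the right ones --- in a general Banach space there is indeed no resolvent/Minty shortcut, so some form of the variational principle is unavoidable.

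There is, however, one concrete defect in the maximality plan: you anchor the quadratic perturbation at the arbitrary comparison point $w_0$, but the hypothesis you are allowed to use, $\langle u^*,u\rangle\ge 0$ for all $(u,u^*)\in G(\partial\widetilde{L})$, is anchored at the origin, and the two must be aligned. Concretely, with the perturbation $\frac{1}{2\lambda}\|w-w_0\|^2$ the extracted pair satisfies $\bar u^*\approx-\frac{1}{\lambda}J(\bar u-w_0)$ ($J$ the duality map), and the monotonicity inequality $\langle\bar u^*,\bar u\rangle\ge 0$ only yields $\|\bar u-w_0\|\lesssim\|w_0\|$; it does not force $\bar u\to w_0$, so lower semicontinuity cannot be applied at $w_0$. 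Moreover the two estimates you propose to combine, namely $\widetilde{L}(0)\ge\widetilde{L}(\bar u)-\langle\bar u^*,\bar u\rangle$ (a lower bound on $\widetilde{L}(0)$, weakened further because $\langle\bar u^*,\bar u\rangle\ge 0$ is being subtracted) and the approximate minimality bound $\widetilde{L}(\bar u)\le\widetilde{L}(w_0)+\varepsilon$ (an upper bound on $\widetilde{L}(\bar u)$), point the same way and cannot be chained into $\widetilde{L}(w_0)\ge\widetilde{L}(0)$. The repair is to center the perturbation at the origin: approximately minimize $\widetilde{L}(w)+\frac{1}{2\lambda}\|w\|^2$ (the infimum is finite because a proper convex lsc functional has a continuous affine minorant), use Ekeland plus the Moreau--Rockafellar sum rule to get $(\bar u,\bar u^*)\in G(\partial\widetilde{L})$ with $\bar u^*\approx-\frac{1}{\lambda}J(\bar u)$ up to an error of size $\varepsilon$; then monotonicity gives $\frac{1}{\lambda}\|\bar u\|^2\lesssim\varepsilon\|\bar u\|$, so $\bar u\to 0$ as $\varepsilon\to 0$, and lower semicontinuity yields $\widetilde{L}(0)\le\liminf\widetilde{L}(\bar u)\le\widetilde{L}(w)+\frac{1}{2\lambda}\|w\|^2$ for every $w$; letting $\lambda\to\infty$ gives $\widetilde{L}(0)\le\widetilde{L}(w)$, i.e.\ $0\in\partial\widetilde{L}(0)$. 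With that re-anchoring your outline closes and agrees with the argument of the reference the paper cites.
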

\begin{proof}
See the fundamental paper by R. T. Rockafellar(\cite{Rockafellar}).
\end{proof}
\begin{theorem}
Let $X$ be a Hilbert space and $A:D(A)\subset X\mapsto 2^X$. Then the following two statements are equivalent:
\begin{enumerate}
\item $A$ is a monotone operator and $R(I+A)=X$
\item $A$ is a maximal monotone operator.
\end{enumerate}
\end{theorem}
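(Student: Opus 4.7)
The implication (1) $\Rightarrow$ (2) is straightforward. Assume $A$ is monotone with $R(I+A)=X$, and let $B\supset A$ be any monotone extension. Given $(u_0,v_0)\in B$, surjectivity of $I+A$ furnishes $(u_1,v_1)\in A\subset B$ with $u_1+v_1=u_0+v_0$. Monotonicity of $B$ applied to these two pairs then gives
\[0\leq \langle v_0-v_1,u_0-u_1\rangle=-\|u_0-u_1\|^2,\]
so $u_0=u_1$, $v_0=v_1$, and $(u_0,v_0)\in A$. Hence $B=A$, and $A$ is maximal.

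The harder implication (2) $\Rightarrow$ (1) is Minty's theorem. My strategy is to show that $R(I+A)$ is simultaneously closed, open, and nonempty in the connected Hilbert space $X$, and hence equal to $X$. Nonemptiness follows from $D(A)\neq\emptyset$, which is itself a consequence of maximality applied to any trivial monotone operator such as $\{(0,0)\}$.

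For closedness, suppose $w_n:=u_n+v_n\to w$ with $v_n\in A(u_n)$. Monotonicity yields $\langle v_n-v_m,u_n-u_m\rangle\geq 0$, so
\[\|w_n-w_m\|^2=\|u_n-u_m\|^2+2\langle u_n-u_m,v_n-v_m\rangle+\|v_n-v_m\|^2\geq\|u_n-u_m\|^2.\]
Thus $\{u_n\}$ is Cauchy with limit $u$, and $v_n\to w-u=:v$. Passing to the limit in $\langle v_n-v',u_n-u'\rangle\geq 0$ for each $(u',v')\in A$ yields $\langle v-v',u-u'\rangle\geq 0$, so maximality of $A$ forces $(u,v)\in A$ and $w\in R(I+A)$.

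The main obstacle is openness. Given $w_0=u_0+v_0\in R(I+A)$ with $v_0\in A(u_0)$, I would reduce the problem by writing $u=u_0+\xi$ and $h=w-w_0$, turning the equation $w-u\in A(u)$ into $\xi+\tilde{A}(\xi)\ni h$, where $\tilde{A}(\xi):=A(u_0+\xi)-v_0$ is again maximal monotone with $0\in\tilde{A}(0)$. I would then run a continuation argument: set $\Lambda:=\{\lambda\in[0,1]:\lambda h\in R(I+\tilde A)\}$, observe $0\in\Lambda$, note $\Lambda$ is closed by the previous step, and prove $\Lambda$ is open via a Banach contraction built from Lemma \ref{nonexpansive}, exploiting the nonexpansiveness of $(I+\tilde A)^{-1}$ on its domain. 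Connectedness of $[0,1]$ then forces $1\in\Lambda$, so $w\in R(I+A)$. Setting up the local contraction without circular reasoning is the delicate point, since the resolvent is only a priori defined on its range.
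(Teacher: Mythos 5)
Your proof of $(1)\Rightarrow(2)$ is correct and is the same computation the paper uses: surjectivity of $I+A$ supplies $(u_1,v_1)\in A$ with $u_1+v_1=u_0+v_0$, and monotonicity of the extension gives $0\leq\langle v_0-v_1,u_0-u_1\rangle=-\|u_0-u_1\|^2$, forcing the two pairs to coincide. Your observations that $R(I+A)$ is nonempty and closed are also correct (the estimate $\|w_n-w_m\|^2\geq\|u_n-u_m\|^2$, the passage to the limit in the monotonicity inequality, and the appeal to maximality are exactly right).

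The genuine gap is the openness step in $(2)\Rightarrow(1)$, and it cannot be closed within the strategy you outline. To prove $\Lambda$ is open at $\lambda_0$ you would need a self-map of some ball whose fixed point solves $\xi+\tilde A(\xi)\ni\lambda h$ for $\lambda$ near $\lambda_0$; any such map built from $(I+\tilde A)^{-1}$ must evaluate that resolvent at points not yet known to lie in $R(I+\tilde A)$, which is exactly the circularity you flag in your last sentence. Lemma \ref{nonexpansive} controls the resolvent on its range but says nothing about whether nearby points belong to the range, and ``nonempty, closed, with nonexpansive single-valued inverse'' is not by itself enough to force openness; maximality must be used again in an essential way, and the contraction principle alone cannot supply it. This is precisely why Minty's theorem is nontrivial: the standard proofs all introduce a substantively new ingredient at this point. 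Minty's and Browder's arguments solve the problem on finite-dimensional sections of the graph using Brouwer's fixed point theorem (via the Debrunner--Flor lemma) and pass to the limit by weak compactness, while the modern convex-analytic proof minimizes the coercive convex lower semi-continuous function $(x,y)\mapsto F_A(x,y)+\frac{1}{2}\|x\|^2+\frac{1}{2}\|y\|^2$, where $F_A(x,y):=\sup_{(a,b)\in A}\{\langle x,b\rangle+\langle a,y\rangle-\langle a,b\rangle\}$ is the Fitzpatrick function, and exploits $F_A(x,y)\geq\langle x,y\rangle$ with equality exactly on the graph. The continuation-by-contraction scheme does work for the related fact that $\{\lambda>0:R(I+\lambda A)=X\}$ is open and closed in $(0,\infty)$, because there the resolvent at $\lambda_0$ is globally defined by hypothesis; that is the (misleading) source of the intuition behind your plan. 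Note that the paper itself only cites Minty for this direction, so you are attempting strictly more than the source does, but as written the implication $(2)\Rightarrow(1)$ is not established.
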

\begin{proof}~\\
$1. \Rightarrow 2.$\\
We only need to show that: if for any $v\in A(u)$, we have $\langle v-v_0, u-u_0\rangle \geq 0$, then $v_0\in A(u_0)$.
Since $R(I+A)=X$, we can find $u_1\in X$ such that $u_0+v_0= u_1+v_1$ for some $v_1\in A(u_1)$. So $\langle v_1-v_0,u_1-u_0 \rangle = -\|u_1-u_0\|_{X}^{2}\geq 0$. Consequently, we have $u_0=u_1$, so $v_0\in A(u_0)$.\\
$2. \Rightarrow 1.$ See the fundamental paper by G. Minty(\cite{Minty}).
\end{proof}

\begin{theorem}
The operator $\mathcal{A}$ defined above is a maximal monotone operator on $L_2(\Omega)$.
\end{theorem}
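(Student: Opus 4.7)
The plan is to invoke the Minty characterization proved just above: $\mathcal{A}$ is maximal monotone iff it is monotone and $R(I+\mathcal{A})=L_2(\Omega)$.

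Monotonicity is a short direct check. Given pairs $(u_i,v_i)$ in $\mathcal{A}$ with witness fields $z_i$ for $i=1,2$, testing the defining identity for $(u_1,v_1)$ at $\phi=u_2$ yields
\[\int_\Omega (u_2-u_1)v_1\,dx=\int_\Omega(z_1,Du_2)-|u_1|_{\BV}\le |u_2|_{\BV}-|u_1|_{\BV},\]
by Theorem \ref{measure} together with $\|z_1\|_{L_\infty}\le 1$. Symmetrizing and adding gives $\int_\Omega (v_1-v_2)(u_1-u_2)\,dx\ge 0$.

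For the range condition, given $f\in L_2(\Omega)$ I would take $u$ to be the unique minimizer of $T(u)=\frac12\|f-u\|_{L_2}^2+|u|_{\BV}$ furnished by Theorem \ref{min}, set $v:=f-u\in L_2(\Omega)$, and construct a field $z$ witnessing $v\in\mathcal{A}(u)$. Minimality delivers the subgradient inequality $\int_\Omega v(\phi-u)\,dx\le |\phi|_{\BV}-|u|_{\BV}$ for every $\phi\in\BV(\Omega)\cap L_2(\Omega)$ (compare $T(u)$ with $T((1-\epsilon)u+\epsilon\phi)$, divide by $\epsilon$, let $\epsilon\to 0^+$, and use convexity of $|\cdot|_{\BV}$). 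Testing this inequality against $\phi=u\pm\lambda\psi$ for $\psi\in C^\infty_0(\Omega)$ and $\lambda\in\R$ gives $|\int_\Omega v\psi\,dx|\le\int_\Omega|\nabla\psi|\,dx$, so Hahn--Banach on the subspace $\{\nabla\psi:\psi\in C^\infty_0(\Omega)\}\subset L_1(\Omega;\R^d)$ produces a $z\in L_\infty(\Omega;\R^d)$ with $\|z\|_{L_\infty}\le 1$ satisfying $\int_\Omega v\psi\,dx=\int_\Omega z\cdot\nabla\psi\,dx$ for every $\psi\in C^\infty_0(\Omega)$. That is, $v=-\Div(z)$ in $\mathcal{D}'(\Omega)$, and because $\Div(z)=-v\in L_2$ we in fact have $z\in X(\Omega)_2$.

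It remains to verify the defining pairing identity of $\mathcal{A}$. Applying the Green formula of Theorem \ref{trace} rewrites, for each $\phi\in\BV\cap L_2$,
\[\int_\Omega(\phi-u)v\,dx=\int_\Omega(z,D\phi)-\int_\Omega(z,Du)-\int_{\partial\Omega}[z,\nu]\bigl(tr(\phi)-tr(u)\bigr)\,dH^{d-1},\]
so the target identity reduces to $\int_\Omega(z,Du)+\int_{\partial\Omega}[z,\nu](tr(\phi)-tr(u))\,dH^{d-1}=|u|_{\BV}$ for all admissible $\phi$. The saturation $\int_\Omega vu\,dx=|u|_{\BV}$ follows by testing the subgradient inequality at $\phi=0$ and $\phi=2u$, and forcing $\phi$-independence of the boundary term (exploiting that traces of $\BV\cap L_2$ functions fill out a dense subspace of $L_1(\partial\Omega)$) yields the Neumann condition $[z,\nu]=0$ $H^{d-1}$-a.e.\ on $\partial\Omega$ and the interior saturation $\int_\Omega(z,Du)=|u|_{\BV}$. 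The main obstacle I anticipate is precisely this last step: Hahn--Banach furnishes no boundary or saturation information a priori, so carefully disentangling the interior pairing from the boundary trace --- using Theorem \ref{measure} to bound $\int_\Omega(z,Du)$ from above by $|u|_{\BV}$, Theorem \ref{dense} to approximate in the $(z,D\cdot)$ pairing, and Theorem \ref{trace} to control boundary integrals --- is where the bulk of the technical work will reside.
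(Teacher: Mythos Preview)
Your route is genuinely different from the paper's. The paper does \emph{not} construct the witness field by Hahn--Banach; instead it approximates by the $p$-Laplace operators $A_p$, solves $u_p+A_p(u_p)=v$ for each $p>1$, and then passes to the limit $p\to 1^+$: the fields $|\nabla u_p|^{p-2}\nabla u_p$ converge weakly in $L_1$ (after a careful equiintegrability and truncation argument) to a $z$ with $\|z\|_{L_\infty}\le 1$, and the variational inequality is obtained in the limit. Your idea of going straight through the minimizer of $T$ and Hahn--Banach is shorter in spirit and avoids the entire $p\to 1$ machinery; but as you yourself flag, the step you leave open is exactly the crux.

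Concretely, the gap is this: extending the functional $\nabla\psi\mapsto\int_\Omega v\psi$ only from the subspace $\{\nabla\psi:\psi\in C^\infty_0(\Omega)\}$ gives a $z$ with $v=-\Div(z)$ but carries no boundary information whatsoever, and there is no mechanism afterward to force $[z,\nu]=0$ or the saturation $\int_\Omega(z,Du)=|u|_{\BV}$ for \emph{that particular} $z$. The identity you write down cannot be ``forced'' to hold for all $\phi$ because you only have the subgradient \emph{inequality}, not equality, so the boundary term cannot be isolated by varying $\phi$.

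The remedy is to enlarge the subspace before extending. Since $\int_\Omega v\,dx=0$ (Theorem~\ref{min}), the linear map $\nabla\psi\mapsto\int_\Omega v\psi$ is well defined on the larger subspace $\{\nabla\psi:\psi\in W^1(L_1(\Omega))\cap L_2(\Omega)\}\subset L_1(\Omega;\R^d)$, and the same test $\phi=u\pm\psi$ in the subgradient inequality gives the bound $|\int_\Omega v\psi|\le\int_\Omega|\nabla\psi|$ for all such $\psi$. Hahn--Banach from \emph{this} subspace produces $z\in L_\infty(\Omega;\R^d)$, $\|z\|_{L_\infty}\le 1$, with $\int_\Omega v\psi=\int_\Omega z\cdot\nabla\psi=\int_\Omega(z,D\psi)$ for every $\psi\in W^1(L_1)\cap L_2$. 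Comparing with Theorem~\ref{trace} shows $\int_{\partial\Omega}[z,\nu]\,tr(\psi)\,dH^{d-1}=0$ for all such $\psi$, hence $[z,\nu]=0$; and then Green together with your saturation $\int_\Omega vu=|u|_{\BV}$ immediately gives $\int_\Omega(z,Du)=|u|_{\BV}$. The defining identity for $\mathcal{A}$ then follows for smooth $\phi$ and extends to $\BV\cap L_2$ by Theorem~\ref{dense}. With this adjustment your argument is complete and is considerably more direct than the paper's $p$-Laplacian limit.
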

\noindent To prove this theorem, we need to introduce a p-Laplace type operator $A_p$ defined on $L_2(\Omega)$:
\[\begin{array}{c}(u,v)\in A_p \mbox{~~if and only if~}u\in W^{1}(L_p(\Omega))\cap L_2(\Omega), v\in L_2(\Omega)\mbox{~and}\\
\int_{\Omega}v\phi~dx=\int_{\Omega}|\nabla u|^{p-2}\nabla u\cdot \nabla \phi~dx~\mbox{for any~}\phi\in W^{1}(L_p(\Omega))\cap L_2(\Omega).
\end{array}\]
From the definition of $A_p$, we can see that $v=A_p(u)=-\Div(|\nabla u|^{p-2}\nabla u)$ in $\mathcal{D'}(\Omega)$ for $u\in D(A_p)$.
\begin{lemma}
$A_p$ is a monotone operator and $R(I+A_p)=L_2(\Omega)$, i.e. $A_p$ is maximal monotone on $L_2(\Omega)$.
\end{lemma}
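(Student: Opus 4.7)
The plan is to establish the two properties separately, since a theorem just above shows that $A_p$ monotone with $R(I+A_p)=L_2(\Omega)$ is equivalent to $A_p$ being maximal monotone on the Hilbert space $L_2(\Omega)$.

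For monotonicity, I would appeal to the classical pointwise inequality
\[(|\xi|^{p-2}\xi-|\eta|^{p-2}\eta)\cdot(\xi-\eta)\ge 0 \quad \text{for all } \xi,\eta\in\R^d,\]
valid for $p>1$ (with equality iff $\xi=\eta$). Given $(u_i,v_i)\in A_p$ for $i=1,2$, the function $\phi:=u_1-u_2$ lies in $W^1(L_p(\Omega))\cap L_2(\Omega)$ and is therefore admissible in the defining identity. Using it against both pairs and subtracting yields
\[\int_\Omega(v_1-v_2)(u_1-u_2)\,dx=\int_\Omega\bigl(|\nabla u_1|^{p-2}\nabla u_1-|\nabla u_2|^{p-2}\nabla u_2\bigr)\cdot\nabla(u_1-u_2)\,dx\ge 0.\]

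For the range condition, I would use a direct variational approach. Given $f\in L_2(\Omega)$, define
\[F(u):=\tfrac{1}{2}\|u-f\|_{L_2}^2+\tfrac{1}{p}\int_\Omega|\nabla u|^p\,dx\]
on $V:=W^1(L_p(\Omega))\cap L_2(\Omega)$. Then $F$ is proper (the constant $\bar f$ is admissible), strictly convex (the quadratic part is, and $|\cdot|^p$ is convex), and coercive: $F(u_n)$ bounded forces both $\|u_n\|_{L_2}$ and $\|\nabla u_n\|_{L_p}$ bounded. Taking a minimizing sequence $\{u_n\}$, I would extract subsequences with $u_{n_j}\rightharpoonup u^*$ weakly in $L_2$ and $\nabla u_{n_j}\rightharpoonup g$ weakly in the reflexive space $L_p$; testing against $C_0^\infty(\Omega)$ and passing to the limit in the weak-derivative identity identifies $g=\nabla u^*$, so $u^*\in V$. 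Weak lower semi-continuity of the $L_2$ norm in the Hilbert space and of the $p$-Dirichlet energy (convex integrand, weak convergence of $\nabla u_{n_j}$) then show $F(u^*)=\inf_V F$, with uniqueness from strict convexity. Computing the Gateaux derivative of $F$ at $u^*$ along any $\phi\in V$ yields the Euler--Lagrange identity
\[\int_\Omega(u^*-f)\phi\,dx+\int_\Omega|\nabla u^*|^{p-2}\nabla u^*\cdot\nabla\phi\,dx=0,\]
so setting $v^*:=f-u^*\in L_2(\Omega)$ gives $(u^*,v^*)\in A_p$ and $f=u^*+v^*\in(I+A_p)(u^*)$.

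The main technical hurdle is the Gateaux-differentiation step for the $p$-Dirichlet integral: one must justify exchanging $\lim_{\e\to 0}\tfrac{1}{\e}$ with $\int_\Omega$ in the difference $\tfrac{1}{\e}\bigl(|\nabla u^*+\e\nabla\phi|^p-|\nabla u^*|^p\bigr)$. The mean-value bound $\bigl||a+\e b|^p-|a|^p\bigr|\le p|\e||b|\bigl(|a|+|\e b|\bigr)^{p-1}$ combined with H\"older's inequality supplies an $L_1$ majorant from $\nabla u^*,\nabla\phi\in L_p$, so dominated convergence applies. All remaining ingredients (existence via the direct method, uniqueness from strict convexity, and the algebraic passage from Euler--Lagrange to $A_p$) are routine for $1<p<\infty$, and this completes both the monotonicity and the surjectivity of $I+A_p$.
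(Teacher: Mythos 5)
Your proposal is correct and takes essentially the same route as the paper: monotonicity via the pointwise $p$-Laplacian inequality (which the paper derives by Cauchy--Schwarz and the scalar factorization $(|a|^{p-1}-|b|^{p-1})(|a|-|b|)\ge 0$), and the range condition via the direct method applied to $F(u)=\tfrac{1}{2}\|u-f\|_{L_2}^2+\tfrac{1}{p}\|\nabla u\|_{L_p}^p$ followed by the Euler--Lagrange identity. The only cosmetic difference is that the paper normalizes the mean and invokes Poincar\'e to get weak compactness of the minimizing sequence in $W^{1}(L_p)$, whereas you extract weak limits of $u_n$ in $L_2$ and of $\nabla u_n$ in $L_p$ separately and identify the weak derivative by testing against $C_0^\infty(\Omega)$; both are routine, and your explicit dominated-convergence justification of the Gateaux derivative is if anything more careful than the paper's $O(\epsilon^2)$ shorthand.
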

\label{lemma:A_p}
\begin{proof}~\\
(i) $A_p$ is monotone: let $(u_1,v_1),(u_2,v_2)\in A_p$, then
\begin{eqnarray*}\langle v_1-v_2, u_1-u_2 \rangle&=&\int_\Omega v_1 u_1 + v_2 u_2 - v_1 u_2 - v_2 u_1~dx\\
&=&\int_\Omega |\nabla u_1|^{p-2}\nabla u_1\cdot \nabla u_1 + |\nabla u_2|^{p-2}\nabla u_2 \cdot \nabla u_2\\
& & - |\nabla u_1|^{p-2}\nabla u_1 \cdot \nabla u_2 - |\nabla u_2|^{p-2}\nabla u_2 \cdot \nabla u_1~dx\\
&\geq&\int_\Omega |\nabla u_1|^p+|\nabla u_2|^p-|\nabla u_1|^{p-1}|\nabla u_2|-|\nabla u_2|^{p-1}|\nabla u_1|~dx\\
&=&\int_\Omega (|\nabla u_1|^{p-1}-|\nabla u_2|^{p-1})(|\nabla u_1|-|\nabla u_2|)~dx\\
&\geq&0.
\end{eqnarray*}
(ii) $R(I+A_p)=L_2(\Omega)$: for any $v\in L_2(\Omega)$, we need to find $u\in D(A_p)$ such that $u+A_p(u)\ni v$. That is to say:
 given $v\in L_2(\Omega)$, we need to find $u\in  W^{1}(L_p(\Omega))\cap L_2(\Omega)$ such that
\begin{equation}
\int_\Omega (v-u)\phi~dx=\int_\Omega |\nabla u|^{p-2}\nabla u\cdot \nabla \phi~dx\label{A_p:weak}
\end{equation}
for any $\phi\in W^{1}(L_p(\Omega))\cap L_2(\Omega)$.
It is actually a weak solution of the Euler-Lagrange equation for the following minimization problem:
\begin{equation}\min_{u\in  W^{1}(L_p(\Omega))\cap L_2(\Omega)} T(u)\label{A_p:min}\end{equation}
where $T(u):=\frac{1}{2}\|v-u\|^2_{L_2}+\frac{1}{p}\|\nabla u\|^p_{L_p}$.
We will first prove there exists a minimizer for (\ref{A_p:min}). Then we will show that such a minimizer is a solution for (\ref{A_p:weak}). Select a minimizing sequence $\{u_k\}_{k=1}^{\infty}$ for (\ref{A_p:min}). Without loss of generality, we can assume $\int_\Omega (v-u_k)~dx=0$ for every $u_k$, otherwise if $\frac{1}{|\Omega|}\int_\Omega (v-u_k)~dx=c\neq 0$, then $\|v-(u_k+c)\|^2_{L_2}<\|v-u_k\|^2_{L_2}$. Let $m=\inf_{u\in  W^{1}(L_p(\Omega))\cap L_2(\Omega)}T(u)<\infty$. Since $T(u_k)\rightarrow m$ and $\|\nabla u_k\|_{L_p}\leq pT(u_k)$, we have $\sup_{k} \|\nabla u_k\|_{L_p}<\infty$. Since $\int_{\Omega}u_k-u_1~dx=0$, applying Poincare inequality, we have:

\begin{eqnarray*}\|u_k\|_{L_p}&\leq&\|u_k-u_1\|_{L_p}+\|u_1\|_{L_p}\leq C\|\nabla u_k-\nabla u_1\|_{L_p}+\|u_1\|_{L_p}\\
&~&\leq C\|\nabla u_k\|_{L_p}+C\|\nabla u_1\|_{L_p}+\|u_1\|_{L_p}.
\end{eqnarray*}
So $\{u_k\}_{k=1}^{\infty}$ is bounded in $W^{1}(L_p(\Omega))$. In addition, since $\|u_k\|_{L_2}\leq \sqrt{2T(u_k)}+\|v\|_{L_2}$, we also have $\{u_k\}_{k=1}^{\infty}$ bounded in $L_2(\Omega)$. Consequently there exists a subsequence $\{u_{k_j}\}_{j=1}^{\infty}\subset \{u_k\}_{k=1}^{\infty}$ and a function $u\in W^{1}(L_p(\Omega))$ such that $u_{k_j}\rightharpoonup u$ weakly in $W^{1}(L_p(\Omega))$ and in $L_2(\Omega)$. So $T(u)\leq \liminf_{j\rightarrow \infty}T(u_{k_j})$. By the fact that $\{u_{k_j}\}^\infty_{j=1}$ is a minimizing sequence, we have $T(u)\leq m$. But from the definition of $m$, $m\leq T(u)$. Consequently $u$ is indeed a minimizer.
Now we will show that $u$ is a solution for (\ref{A_p:weak}).
\begin{eqnarray*}\delta T(u;\phi)&=&\lim_{\epsilon \rightarrow 0}\frac{1}{\epsilon}\{T(u+\epsilon \phi)-T(u)\}\\
&=&\lim_{\epsilon \rightarrow 0}\frac{1}{\epsilon}\{\frac{1}{2}(\|v-(u+\epsilon \phi)\|^2_{L_2}-\|v-u\|^2_{L_2})+\frac{1}{p}(\|\nabla (u+\epsilon \phi)\|^p_{L_p}-\|\nabla u\|^p_{L_p})\}\\
&=&\lim_{\epsilon \rightarrow 0}\frac{1}{\epsilon}\int_{\Omega}(u-v)\epsilon \phi+|\nabla u|^{p-2}\nabla u\cdot (\epsilon \nabla \phi)+O(\epsilon^2)~dx\\
&=&\int_\Omega (u-v)\phi+|\nabla u|^{p-2}\nabla u\cdot \nabla \phi~dx.
\end{eqnarray*}
The necessary condition for $u$ to be a minimizer of $T(u)$ is $\delta T(u;\phi)=0$, which is (\ref{A_p:weak}).
\end{proof}
\noindent Roughly speaking, we want to see \lq\lq{$\mathcal{A}(u)=\lim_{p\rightarrow 1}A_p(u)$}\rq\rq. It is easy to show that $\mathcal{A}$ is monotone, to prove it is a maximal monotone operator on $L_2(\Omega)$, we also need the range condition: $R(I+\mathcal{A})=L_2(\Omega)$. We need the following two lemmas:
\begin{lemma}
$\mathcal{A}$ is a monotone operator on $L_2(\Omega)$.
\end{lemma}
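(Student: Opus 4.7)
The plan is to unpack the definition of $\mathcal{A}$ twice, swap the test functions, and use the pointwise bound on the pairing $(z,Du)$ from Theorem \ref{measure} to get the inequality essentially for free.

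Concretely, let $(u_1,v_1),(u_2,v_2)\in\mathcal{A}$ with associated vector fields $z_1,z_2\in X(\Omega)_2$, $\|z_i\|_{L_\infty}\leq 1$, $v_i=-\Div(z_i)$. First, I would specialize the defining identity for $(u_i,v_i)$ to the test function $\phi=u_i$ itself, which gives the self-pairing
\[
\int_\Omega (z_i,Du_i)=|u_i|_{\BV},\qquad i=1,2.
\]
Next, I would take $\phi=u_2$ in the identity for $(u_1,v_1)$ and $\phi=u_1$ in the identity for $(u_2,v_2)$, yielding
\[
\int_\Omega (u_2-u_1)v_1\,dx=\int_\Omega (z_1,Du_2)-|u_1|_{\BV},
\]
\[
\int_\Omega (u_1-u_2)v_2\,dx=\int_\Omega (z_2,Du_1)-|u_2|_{\BV}.
\]
Adding these gives
\[
-\langle v_1-v_2,u_1-u_2\rangle=\int_\Omega (z_1,Du_2)+\int_\Omega (z_2,Du_1)-|u_1|_{\BV}-|u_2|_{\BV}.
\]

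Finally, I would invoke the measure bound from Theorem \ref{measure}: since $\|z_i\|_{L_\infty}\leq 1$,
\[
\Bigl|\int_\Omega (z_1,Du_2)\Bigr|\leq \int_\Omega |Du_2|=|u_2|_{\BV},\qquad \Bigl|\int_\Omega (z_2,Du_1)\Bigr|\leq |u_1|_{\BV}.
\]
Consequently the right-hand side of the previous display is $\leq 0$, so $\langle v_1-v_2,u_1-u_2\rangle\geq 0$, which is monotonicity.

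There is not really a hard step here, only a small bookkeeping issue: one must make sure that $u_1,u_2\in \BV(\Omega)\cap L_2(\Omega)$ so that they are admissible test functions in each other's defining relation. This is automatic, since the definition of $v\in\mathcal{A}(u)$ already forces $u\in \BV(\Omega)\cap L_2(\Omega)$ (otherwise $|u|_{\BV}=+\infty$ and the defining identity cannot hold against every bounded $\phi$). After that, the whole argument is a one-line consequence of the Cauchy-Schwarz-type pairing estimate $\int_B(z,Dw)\leq\|z\|_{L_\infty}\int_B|Dw|$ supplied by Theorem \ref{measure}.
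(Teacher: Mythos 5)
Your proof is correct and is essentially the paper's own argument: both amount to testing each defining identity against the other element's $u$, reducing $\langle v_1-v_2,u_1-u_2\rangle$ to $|u_1|_{\BV}+|u_2|_{\BV}-\int_\Omega(z_1,Du_2)-\int_\Omega(z_2,Du_1)$, and then applying the bound $\int_\Omega(z,Dw)\leq\|z\|_{L_\infty}|w|_{\BV}$ from Theorem \ref{measure}. The only cosmetic difference is that the paper expands the inner product into four cross terms directly rather than adding the two swapped identities.
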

\begin{proof}
Let $(u_1,v_1),(u_2,v_2)\in \mathcal{A}$. We have:
\begin{eqnarray*}\langle v_1-v_2,u_1-u_2 \rangle &=&\langle v_1,u_1 \rangle+ \langle v_2,u_2 \rangle- \langle v_1,u_2 \rangle - \langle v_2,u_1 \rangle\\
&=&\int_{\Omega} v_1 u_1+v_2 u_2-v_1 u_2 - v_2 u_1~dx\\
&=&\int_{\Omega} (z_1, Du_1)+(z_2,Du_2)-(z_1,Du_2)-(z_2,Du_1)\\
&=&|u_1|_{\BV}+|u_2|_{\BV}-\int_{\Omega}(z_1,Du_2)-\int_{\Omega}(z_2,Du_1)
\end{eqnarray*}
Since $\int_{\Omega}(z_1,Du_2)+(z_2,Du_1)\leq |u_2|_{\BV}+|u_1|_{\BV}$, we get $\langle v_1-v_2,u_1-u_2 \rangle \geq 0$, which means $\mathcal{A}$  is monotone.
\end{proof}
\begin{lemma}
$R(I+\mathcal{A})=L_2(\Omega)$
\end{lemma}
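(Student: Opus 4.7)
The plan is to approximate the operator $\cA$ by the $p$-Laplace type operators $A_p$ from the previous lemma and pass to the limit $p\to 1^+$. Fix $f\in L_2(\Omega)$. For each $p\in(1,2]$, the previous lemma produces a unique $u_p\in W^{1}(L_p(\Omega))\cap L_2(\Omega)$ with $u_p+A_p(u_p)\ni f$; equivalently $u_p$ minimizes $T_p(u):=\tfrac12\|f-u\|_{L_2}^2+\tfrac1p\|\nabla u\|_{L_p}^p$, and setting $z_p:=|\nabla u_p|^{p-2}\nabla u_p$, we have $u_p-\Div(z_p)=f$ in $\mathcal{D}'(\Omega)$ with a Neumann-type condition on $z_p$ encoded in the choice of test functions $\phi\in W^{1}(L_p(\Omega))\cap L_2(\Omega)$. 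The target $u$ and $z$ in the definition of $\cA$ will be obtained as weak limits of $u_p$ and $z_p$ along a suitable subsequence.

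From $T_p(u_p)\leq T_p(0)=\tfrac12\|f\|_{L_2}^2$ I obtain $\|u_p\|_{L_2}\leq\|f\|_{L_2}$ and $\tfrac1p\int_\Omega|\nabla u_p|^p\,dx\leq\tfrac12\|f\|_{L_2}^2$. The pointwise Young inequality $x\leq\tfrac{x^p}{p}+\tfrac{1}{p'}$ gives $\int_\Omega|\nabla u_p|\,dx\leq\tfrac12\|f\|_{L_2}^2+\tfrac{|\Omega|}{p'}$, so $\{u_p\}$ is uniformly bounded in $\BV(\Omega)\cap L_2(\Omega)$. Since $\|z_p\|_{L_{p'}}^{p'}=\int_\Omega|\nabla u_p|^p\,dx$ is bounded and $p'\to\infty$ as $p\to 1^+$, Hölder yields $\|z_p\|_{L_q}$ uniformly bounded for every fixed $q<\infty$ once $p$ is close enough to $1$. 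By $\BV$-compactness and a diagonal extraction, along a subsequence $u_p\to u$ in $L_1(\Omega)$ and weakly in $L_2(\Omega)$, while $z_p\rightharpoonup z$ weakly in $L_q(\Omega;\R^d)$ for every $q<\infty$. Weak lower semicontinuity combined with $\|z_p\|_{L_{p'}}\to 1$ gives $\|z\|_{L_q}\leq|\Omega|^{1/q}$, and letting $q\to\infty$ yields $\|z\|_{L_\infty}\leq 1$.

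Passing to the limit in $\int_\Omega(f-u_p)\phi\,dx=\int_\Omega z_p\cdot\nabla\phi\,dx$ for $\phi\in C^1(\bar\Omega)$ yields $\int_\Omega(f-u)\phi\,dx=\int_\Omega z\cdot\nabla\phi\,dx$. Specializing to $\phi\in C^\infty_0(\Omega)$ shows $-\Div(z)=f-u$ in $\mathcal{D}'(\Omega)$; hence $\Div(z)\in L_2(\Omega)$ and $z\in X(\Omega)_2$. For general $\phi\in C^1(\bar\Omega)$, Anzellotti's identity from Theorem~\ref{trace} reads $\int_\Omega\phi\Div(z)\,dx+\int_\Omega z\cdot\nabla\phi\,dx=\int_{\partial\Omega}[z,\nu]\phi\,dH^{d-1}$, which combined with the previous line forces $\int_{\partial\Omega}[z,\nu]\phi\,dH^{d-1}=0$ for all such $\phi$, hence $[z,\nu]=0$ $H^{d-1}$-a.e.\ on $\partial\Omega$. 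What remains is the compatibility $\int_\Omega(z,Du)=|u|_{\BV}$; combined with $\|z\|_{L_\infty}\leq 1$, $-\Div(z)=f-u$, and $[z,\nu]=0$, this will imply $f-u\in\cA(u)$ and hence $f\in(I+\cA)(u)$.

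The compatibility identity is the main obstacle. The strategy is to prove $T_p(u_p)\to T(u)$, where $T(u):=\tfrac12\|f-u\|_{L_2}^2+|u|_{\BV}$. For the upper bound, let $u^*$ be the unique $T$-minimizer from Theorem~\ref{min} and approximate it by smooth $\phi_\epsilon$ with $\phi_\epsilon\to u^*$ in $L_1$ and $|\phi_\epsilon|_{\BV}\to|u^*|_{\BV}$; then $T_p(u_p)\leq T_p(\phi_\epsilon)\to T(\phi_\epsilon)$ by dominated convergence, so $\limsup T_p(u_p)\leq T(u^*)$. For the lower bound, the pointwise Young inequality $\tfrac1p|\nabla u_p|^p\geq|\nabla u_p|-(1-1/p)$ together with weak $L_2$ lower semicontinuity and $\BV$ lower semicontinuity under $L_1$ convergence yields $T(u)\leq\liminf T_p(u_p)$. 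Uniqueness of the $T$-minimizer then forces $u=u^*$ and all inequalities become equalities, giving in particular $u_p\to u$ strongly in $L_2$ and $\int_\Omega|\nabla u_p|^p\,dx\to|u|_{\BV}$. Testing the $p$-Laplacian equation against $u_p$ itself gives $\int_\Omega u_p(f-u_p)\,dx=\int_\Omega|\nabla u_p|^p\,dx$; passing to the limit using the strong $L_2$ convergence yields $\int_\Omega u(f-u)\,dx=|u|_{\BV}$. Finally, Anzellotti's trace formula together with $[z,\nu]=0$ gives $\int_\Omega u(f-u)\,dx=-\int_\Omega u\Div(z)\,dx=\int_\Omega(z,Du)$, closing the argument.
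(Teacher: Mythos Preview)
Your argument follows the same overall plan as the paper (approximate by the $p$-Laplace operators $A_p$ and pass to the limit), but diverges in two substantive places, and both of your variants are legitimate and in some ways cleaner.

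For the bound $\|z\|_{L_\infty}\leq 1$, the paper splits $z_p$ via the level sets $B_{p,k}=\{|\nabla u_p|>k\}$ and shows that the ``large gradient'' part vanishes in $L_1$ while the ``small gradient'' part has $L_\infty$ norm $\le k^{p-1}\to 1$. Your route via $\|z_p\|_{L_{p'}}^{p'}=\int|\nabla u_p|^p\leq M$, hence $\|z\|_{L_q}\leq|\Omega|^{1/q}$ for every $q<\infty$, and then $q\to\infty$, is shorter and avoids the truncation entirely. One inaccuracy: you write ``$\|z_p\|_{L_{p'}}\to 1$'', but this can fail (take $f=0$); what you actually have and need is $\limsup_{p\to 1^+}\|z_p\|_{L_{p'}}\leq 1$, which suffices.

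For the compatibility $\int_\Omega(z,Du)=|u|_{\BV}$, the paper tests the $p$-equation with $\phi_n-u_p$ and uses only lower semicontinuity to obtain the variational inequality $\int_\Omega(v-u)(\phi-u)+|u|_{\BV}\leq\int_\Omega(z,D\phi)$ directly for all $\phi\in\BV\cap L_2$; equality then follows by a symmetry argument. You instead run a $\Gamma$-convergence style argument: lower bound by Young and lsc, upper bound via a recovery sequence for the $T$-minimizer $u^*$, hence $T_p(u_p)\to T(u^*)$, strong $L_2$ convergence $u_p\to u=u^*$, and the identity drops out by testing with $u_p$ itself. This gives you extra information (strong $L_2$ convergence of $u_p$ and identification $u=u^*$) that the paper does not extract.

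There is one point where your write-up is imprecise. In the upper bound you take smooth $\phi_\epsilon$ with $\phi_\epsilon\to u^*$ in $L_1$ and $|\phi_\epsilon|_{\BV}\to|u^*|_{\BV}$, and then assert $T_p(\phi_\epsilon)\to T(\phi_\epsilon)$ by dominated convergence and $T(\phi_\epsilon)\to T(u^*)$. Two things are needed here that $L_1$ convergence alone does not give: (i) $\phi_\epsilon\to u^*$ in $L_2$, so that $\|f-\phi_\epsilon\|_{L_2}\to\|f-u^*\|_{L_2}$; and (ii) $|\nabla\phi_\epsilon|\in L_{p_0}$ for some fixed $p_0>1$ (e.g.\ $\nabla\phi_\epsilon$ bounded), so that $|\nabla\phi_\epsilon|^p$ is dominated by $1+|\nabla\phi_\epsilon|^{p_0}$ for $1<p<p_0$. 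Both are achievable by choosing the approximants more carefully: truncate $u^*$ to lie in $L_\infty$, extend across $\partial\Omega$ using the Lipschitz hypothesis, and mollify on $\R^d$; then $\nabla\phi_\epsilon=D(Eu^*)\ast\rho_\epsilon$ is bounded and $\phi_\epsilon\to u^*$ in $L_2$. With this adjustment your upper bound is rigorous and the rest of the argument goes through.
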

\begin{proof}
We only need to show that: for any $v\in L_{2}(\Omega)$, there exists $u\in \BV(\Omega)\cap L_2(\Omega)$ such that $(u,v-u)\in \mathcal{A}$. 

By lemma \ref{lemma:A_p}, we know that: given $v\in L_{2}(\Omega)$, for any $p>1$, there is $u_p\in W^{1}(L_p(\Omega))\cap L_2(\Omega)$ such that $(u_p,v-u_p)\in A_p$. Hence we have
\begin{equation}
\label{Range:1}
\int_{\Omega}(v-u_p)\phi~dx=\int_{\Omega}|\nabla u_p|^{p-2}\nabla u_p\cdot \nabla \phi~dx\end{equation}
for every $\phi \in W^{1}(L_p(\Omega))\cap L_{2}(\Omega)$. 

Since $A_p$ is maximal monotone, by lemma \ref{nonexpansive}, and noticing $0\in A_p(0)$, we have
\begin{equation}
\label{Range:2}
\|u_p\|_{L_2}=\|(I+A_p)^{-1}v\|_{L_2}\leq \|v\|_{L_2}.\end{equation}
Take $\phi=u_p$ and combine (\ref{Range:2}), we get the estimate:
\begin{equation}
\label{Range:3}
\int_{\Omega}|\nabla u_p|^p~dx=\int_{\Omega}(v-u_p)u_p~dx\leq \|v-u_p\|_{L_2}\|u_p\|_{L_2}\leq 2\|v\|^2_{L_2}=M,
\end{equation}
for any $p>1$. 

By using Holder inequality we also have:
\[\int_{\Omega}|\nabla u_p|~dx\leq (\int_{\Omega}1~dx)^{\frac{1}{q}}(\int_{\Omega}|\nabla u_p|^{p}~dx)^{\frac{1}{p}}=|\Omega|^{\frac{1}{q}}M^{\frac{1}{p}}\leq \max\{|\Omega|M,|\Omega|,M,1\}=M_0.\]
Hence $\{u_p\}_{p>1}$ is bounded in $W^{1}(L_1(\Omega))$ and we may extract a subsequence such that $u_p$ converges in $L_1(\Omega)$ and almost everywhere to some $u\in L_1(\Omega)$ as $p\rightarrow 1+$. 

By Fatou's lemma, we have
\[\int_{\Omega} u^2~dx\leq \liminf_{p\rightarrow 1+} \int_{\Omega} u_p^2~dx\leq \|v\|^2_{L_2}.\]
By lemma \ref{semi-contin}, we get
\[\int_{\Omega} |D u| \leq \liminf_{p\rightarrow 1+} \int_{\Omega}|\nabla u_p|~dx\leq M_0,\]
so we have that $u\in\BV(\Omega)\cap L_2(\Omega)$. Since $\{u_p\}_{p>1}$ is bounded in $L_2(\Omega)$, without loss of generality, we can assume $u_p \rightharpoonup u$ weakly in $L_2(\Omega)$ as $p\rightarrow 1+$.\\
Now we want to show that $\{|\nabla u_p|^{p-2}\nabla u_p\}_{p>1}$ is weakly relatively compact in $L_{2}(\Omega;\R^d)$. First we need to obtain the following two estimates:
\[\int_{\Omega}|\nabla u_p|^{p-1}~dx\leq (\int_{\Omega}|\nabla u_p|^p~dx)^{\frac{p-1}{p}}|\Omega|^{\frac{1}{p}}\leq M^{\frac{p-1}{p}}|\Omega|^{\frac{1}{p}}\leq M_0\]
and for any measurable subset $E\subset \Omega$,
\[|\int_E |\nabla u_p|^{p-2}\nabla u_p~dx|\leq \int_E |\nabla u_p|^{p-1}~dx\leq M^{\frac{p-1}{p}}|E|^{\frac{1}{p}}\leq \max \{M,1\}|E|^{\frac{1}{2}},~\mbox{for~}|E|<1\mbox{~and~}1<p<2.\]
Hence $\{|\nabla u_p|^{p-2}\nabla u_p\}_{p>1}$ is bounded and equiintegrable in $L_{1}(\Omega;\R^d)$, and consequently weakly relatively compact in $L_1(\Omega;\R^d)$. Thus without loss of generality, we can assume: there exists $z\in L_1(\Omega;\R^d)$ such that,
\[|\nabla u_p|^{p-2}\nabla u_p \rightharpoonup z \mbox{~~as~~} p\rightarrow 1+, \mbox{~weakly in~} L_1(\Omega;\R^d).\]
Take $\phi\in C^{\infty}_{0}(\Omega)$ in (\ref{Range:1}) and let $p\rightarrow 1+$, we obtain
\[\int_{\Omega}(v-u)\phi=\int_{\Omega}z\cdot \nabla \phi,\]
which means $v-u=-\Div(z)$ in $\mathcal{D}'(\Omega)$.\\
Now we need to prove $\|z\|_{L_\infty}\leq 1$. For any $k>0$, let $B_{p,k}:=\{x\in \Omega:|\nabla u_p(x)|>k\}$. By (\ref{Range:3}), we have:
\[k^p|B_{p,k}|\leq \int_{B_{p,k}}|\nabla u_p|^p~dx\leq \int_{\Omega}|\nabla u_p|^p~dx\leq M.\]
Hence $|B_{p,k}|\leq \frac{M}{k^p}\leq \max\{\frac{M}{k},\frac{M}{k^2}\}$ for every $1<p<2$ and $k>0$.

As above, there is some $g_k\in L_1(\Omega;\R^d)$ such that $|\nabla u_p|^{p-2}\nabla u_p \Chi_{B_{p,k}} \rightharpoonup g_k$ weakly in $L_1(\Omega;\R^d)$ as $p\rightarrow 1+$.

Now for any $\phi\in L_{\infty}(\Omega;\R^d)$ with $\|\phi\|_{L_{\infty}}\leq 1$, we can prove that
\[|\int_{\Omega}|\nabla u_p|^{p-2}\nabla u_p\Chi_{B_{p,k}}\cdot \phi~dx| \leq \int_{B_{p,k}} |\nabla u_p|^{p-1}~dx\leq \frac{1}{k}\int_{B_{p,k}}|\nabla u_p|^p~dx\leq \frac{M}{k}.\]
Since
\[|\int_{\Omega}g_k\cdot \phi~dx|\leq |\int_{\Omega}|\nabla u_p|^{p-2}\nabla u_p\Chi_{B_{p,k}}\cdot \phi~dx|+|\int_{\Omega}(|\nabla u_p|^{p-2}\nabla u_p\Chi_{B_{p,k}}-g_k)\cdot \phi~dx|,\]
let $p\rightarrow 1+$, we get:
$|\int_{\Omega}g_k\cdot \phi~dx|\leq \frac{M}{k}$. So $\int_{\Omega}|g_k|~dx\leq \frac{M}{k}$ for every $k>0$. Hence $g_k\rightarrow 0$ in $L_1(\Omega;\R^d)$ and a.e..

Since we have:
\[|~|\nabla u_p|^{p-2}\nabla u_p \Chi_{\Omega \setminus B_{p,k}}|\leq k^{p-1}\leq \max \{k,k^2\}~\mbox{~for~~}1<p<2,\]
there exists $f_k \in L_{\infty}(\Omega;\R^d)$ and we can assume $|\nabla u_p|^{p-2}\nabla u_p \Chi_{\Omega \setminus B_{p,k}}\rightharpoonup f_k$ weakly in $L_{\infty}(\Omega;\R^d)$ when $p\rightarrow 1+$. Thus $\|f_k\|_{L_\infty}\leq \liminf_{p\rightarrow 1+}\|~|\nabla u_p|^{p-2}\nabla u_p \Chi_{\Omega \setminus B_{p,k}}\|_{L_\infty}\leq 1$. 

Since for any $k>0$, we can write $z=f_k+g_k$, then we have $z-f_k=g_k\rightarrow 0$ a.e.. By the fact $\|f_k\|_{L_\infty}\leq 1$, we get $\|z\|_{L_\infty}\leq 1$.
\\
To prove $(u,v-u)\in \mathcal{A}$, we also need to show
\[\int_\Omega (\phi-u)(v-u)~dx=\int_\Omega (z,D\phi)-|u|_{\BV} \mbox{~~for any~}\phi\in W^{1}(L_1(\Omega))\cap L_2(\Omega).\]
For any $\phi\in W^{1}(L_1(\Omega))\cap L_2(\Omega)$, let $\phi_n\in C^{\infty}(\bar{\Omega})$ be such that $\phi_n\rightarrow \phi$ in $W^{1}(L_1(\Omega))\cap L_2(\Omega)$ as $n \rightarrow \infty$. Using $\phi_n-u_p$ as a test function in (\ref{Range:1}), we get
\[\int_{\Omega}(v-u_p)(\phi_n-u_p)~dx=\int_{\Omega}|\nabla u_p|^{p-2}\nabla u_p\cdot \nabla(\phi_n-u_p)~dx.\]
Hence,
\begin{equation}
\label{Range:4}
\int_{\Omega}(v-u_p)(\phi_n-u_p)~dx+\int_{\Omega}|\nabla u_p|^p~dx=\int_{\Omega}|\nabla u_p|^{p-2}\nabla u_p\cdot \nabla \phi_n~dx.
\end{equation}
Since $u_p \rightharpoonup u$ weakly in $L_2(\Omega)$ as $p \rightarrow 1+$, we have $\|u\|_{L_2}\leq \liminf_{p \rightarrow 1+}\|u_p\|_{L_2}$, $\int_{\Omega}(v-u_p)\phi_n~dx\rightarrow \int_{\Omega}(v-u)\phi_n~dx$ and $\int_{\Omega} vu_p~dx\rightarrow \int_{\Omega}vu~dx$. 

And since we have $\|\nabla u_p\|_{L_1}\leq \|\nabla u_p\|_{L_p}|\Omega|^{1-\frac{1}{p}}$ and $|u|_{\BV} \leq \liminf_{p \rightarrow 1+} \|\nabla u_p\|_{L_1}$, then
\[|u|_{\BV}\leq \liminf_{p \rightarrow 1+} \|\nabla u_p\|_{L_p}|\Omega|^{1-\frac{1}{p}}\leq (\liminf_{p\rightarrow 1+} \|\nabla u_p\|_{L_p})(\lim_{p \rightarrow 1+} |\Omega|^{1-\frac{1}{p}})=\liminf_{p\rightarrow 1+} \|\nabla u_p\|_{L_p}.\]
So
\begin{eqnarray*}|u|_{\BV}&\leq&(\liminf_{p \rightarrow 1+}\|\nabla u_p\|^p_{L_p})(\limsup_{p \rightarrow 1+}\|\nabla u_p\|^{1-p}_{L_p})\\
&\leq&(\liminf_{p \rightarrow 1+}\|\nabla u_p\|^p_{L_p})(\lim_{p \rightarrow 1+}M^{\frac{1-p}{p}})=\liminf_{p \rightarrow 1+}\|\nabla u_p\|^p_{L_p}.
\end{eqnarray*}
Combining (\ref{Range:4}), we get:
\[\int_{\Omega}(v-u)(\phi_n-u)~dx+|u|_{\BV}\leq \int_{\Omega}z\cdot \nabla \phi_n~dx.\]
Letting $n \rightarrow \infty$, we get:
\[\int_{\Omega}(v-u)(\phi-u)~dx+|u|_{\BV}\leq \int_{\Omega}z\cdot \nabla \phi~dx.\]
By Theorem \ref{dense}, for any $\phi\in \BV(\Omega)\cap L_2(\Omega)$, we can find a sequence $\{\phi_n\}\subset W^{1}(L_1(\Omega))\cap L_2(\Omega)$ such that:
\[\phi_{n}\rightarrow \phi \mbox{~in~}L_{2}(\Omega)\mbox{~~and~~}\int_{\Omega}(z,D\phi_n)\rightarrow \int_{\Omega}(z,D\phi).\]
So we can claim that
\[\int_{\Omega}(v-u)(\phi-u)~dx+|u|_{\BV}\leq \int_{\Omega}(z,D\phi)\]
holds for any $\phi\in \BV(\Omega)\cap L_2(\Omega)$. Consequently, we have:
\[\int_{\Omega}(v-u)(\phi-u)~dx+|u|_{\BV}= \int_{\Omega}(z,D\phi).\]
So the theorem is proved.
\end{proof}
\noindent Recall:
\[J(u):=\left \{ \begin{array}{ll}|u|_{\BV}&u\in \BV(\Omega)\cap L_2(\Omega)\\+\infty&u\in L_2(\Omega) \backslash  (\BV(\Omega)\cap L_2(\Omega))\end{array} \right.\]
$J$ is a proper convex functional defined on $L_2(\Omega)$. Since $(L_2(\Omega))^*=L_2(\Omega)$, we have $\partial J\subset L_2(\Omega)\times L_2(\Omega)$.
Now we can present the fundamental theorem in this subsection:
\begin{theorem}
\label{th:equiv}
$\partial J =\mathcal{A}$.
\end{theorem}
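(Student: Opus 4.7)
The plan is to establish the equality by proving the two inclusions $\mathcal{A}\subset \partial J$ and $\partial J\subset \mathcal{A}$ separately, with the second following almost formally from the maximal monotonicity already in hand.

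First I would verify the easy direction $\mathcal{A}\subset \partial J$ directly from the definitions. Take $(u,v)\in \mathcal{A}$ with the associated vector field $z\in X(\Omega)_2$, $\|z\|_{L_\infty}\leq 1$, $v=-\Div(z)$. For any $\phi\in \BV(\Omega)\cap L_2(\Omega)$, Theorem \ref{measure} gives the pointwise bound $|(z,D\phi)|\leq \|z\|_{L_\infty}|D\phi|$ as Radon measures, so
\[
\int_\Omega (z,D\phi) \leq \int_\Omega |(z,D\phi)| \leq \|z\|_{L_\infty}\int_\Omega |D\phi| \leq |\phi|_{\BV} = J(\phi).
\]
Plugging this into the defining identity for $\mathcal{A}$ yields
\[
\int_\Omega v(\phi-u)\,dx = \int_\Omega (z,D\phi) - |u|_{\BV} \leq J(\phi) - J(u),
\]
which is exactly the subgradient inequality. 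For $\phi\in L_2(\Omega)\setminus \BV(\Omega)$ the inequality holds trivially since $J(\phi)=+\infty$. Thus $v\in \partial J(u)$.

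For the reverse inclusion I would invoke maximality instead of arguing directly. The functional $J$ is proper and convex on $L_2(\Omega)$, and it is lower semi-continuous on $L_2(\Omega)$ by Lemma \ref{semi-contin} (a strongly convergent sequence is in particular weakly convergent). Hence Theorem \ref{semi} applies to give that $\partial J$ is a maximal monotone operator on $L_2(\Omega)$. On the other hand we have already established that $\mathcal{A}$ is itself a maximal monotone operator on $L_2(\Omega)$. Combined with the inclusion $\mathcal{A}\subset \partial J$ proved in the previous paragraph, and the fact that a maximal monotone operator admits no proper monotone extension, this forces $\mathcal{A}=\partial J$.

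There is no serious obstacle remaining once the maximal monotonicity of $\mathcal{A}$ is granted, which is why all the heavy lifting has already been done in the preceding $p$-Laplace approximation argument. The only point that requires a little care is the Radon-measure inequality $\int_\Omega (z,D\phi)\leq \|z\|_{L_\infty}\int_\Omega |D\phi|$, but this is precisely the content of Theorem \ref{measure}, and it is what makes the subgradient inequality emerge cleanly from the definition of $\mathcal{A}$.
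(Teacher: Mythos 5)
Your proof is correct and follows essentially the same route as the paper: the inclusion $\mathcal{A}\subset\partial J$ (which the paper dismisses as ``easy to see'' and you rightly fill in via the bound $\int_\Omega(z,D\phi)\leq\|z\|_{L_\infty}|\phi|_{\BV}$ from Theorem \ref{measure}), combined with the maximal monotonicity of $\mathcal{A}$ and the monotonicity of $\partial J$ guaranteed by Theorem \ref{semi}, forces equality.
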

\begin{proof}
Since the functional $J(u)$ is proper convex and lower semi-continuous,  by Theorem \ref{semi}, we know $\partial J$ is maximal monotone. It is easy to see $\mathcal{A}\subset \partial J$. As we proved above, $\mathcal{A}$ is maximal monotone. So $\partial J=\mathcal{A}$.

\end{proof}
\begin{remark}
The results in this section also holds for $\Omega=\mathbb{R}^d$.
\end{remark}

\subsection{Characterization of the Pair $(L_2(\Omega),\BV(\Omega))$}
From the definition of subdifferential, we get the following \emph{Minimum Principle}:
\begin{theorem}
Let $L:X\mapsto (-\infty,+\infty]$ be a proper functional on the real Banach space $X$. Then $u^*$ is a minimizer for $L(u)$ if and only if $0\in \partial L(u^*)$.
\end{theorem}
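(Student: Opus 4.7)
The plan is to read off both implications directly from the definition of the subdifferential, specialized to the subgradient $0\in X^*$. No approximation, regularization, or monotonicity argument is required; the result is essentially a tautology once the definitions are unpacked, and in particular it does not rely on convexity, lower semi-continuity, or on Theorem \ref{semi}.

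First I would verify the forward direction. If $0\in \partial L(u^*)$, then by definition $L(u^*)\in\R$ and the subgradient inequality reads
\[L(w)\geq L(u^*)+\langle 0, w-u^*\rangle \qquad \text{for every } w\in X.\]
Since $\langle 0, w-u^*\rangle=0$, this collapses to $L(w)\geq L(u^*)$ on all of $X$, which is exactly the statement that $u^*$ is a minimizer. For the converse, I would suppose $u^*$ minimizes $L$ and first pin down that $L(u^*)$ is finite: since $L$ is proper, there exists $w_0$ with $L(w_0)<+\infty$, and minimality gives $L(u^*)\leq L(w_0)<+\infty$; the range constraint $L:X\mapsto(-\infty,+\infty]$ then yields $L(u^*)\in\R$. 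With this in hand, the minimization inequality $L(w)\geq L(u^*)=L(u^*)+\langle 0, w-u^*\rangle$ for every $w\in X$ is precisely the defining condition of $0\in\partial L(u^*)$.

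The one point that requires any care—and the only conceivable obstacle—is the bookkeeping around the value $L(u^*)$: the definition of subgradient demands $L(u^*)\neq \pm \infty$, so one must invoke properness to guarantee that the subdifferential at a minimizer is not vacuously empty. Beyond this, the statement is a one-line consequence of the definitions in each direction, and no further machinery from the preceding sections is needed.
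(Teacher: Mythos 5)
Your proof is correct: the statement is indeed an immediate consequence of the paper's definition of the subdifferential once properness is used to secure $L(u^*)\in\R$, and the paper itself omits the proof precisely because it is this definitional unpacking. Your attention to the finiteness requirement $L(u^*)\neq\pm\infty$ is the right (and only) point of care.
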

Since $\partial T(u)=t \partial J(u)-(f-u)$, the necessary and sufficient condition for $u_t$ to be a minimizer of problem (\ref{equ:L2-BV}) is:
\begin{equation}
\label{equ:iff}
t\partial J(u_t)-(f-u_t)\ni 0.\end{equation}
Since we have shown in Theorem \ref{th:equiv} that $\mathcal{A}=\partial J$, (\ref{equ:iff}) can be rewritten as:
\[t\mathcal{A}(u_t)-(f-u_t)\ni 0.\]
\begin{theorem}
\label{th:char}
The following assertions are equivalent:
\begin{enumerate}
\item $u_t$ is a minimizer for problem (\ref{equ:L2-BV}).
\item $u_t\in \BV(\Omega)_2$ and there exists $z \in X(\Omega)_2$ with $\|z\|_{L_\infty}\leq 1$, $f-u_t=-t\Div(z)$ such that
\[-\int_\Omega\Div(z)(\phi-u_t)~dx=\int_\Omega(z,D\phi)-|u_t|_{\BV}\]
for any $\phi\in \BV(\Omega)_2$.
\item $u_t\in \BV(\Omega)_2$ and there exists $z \in X(\Omega)_2$ with $\|z\|_{L_\infty}\leq 1$, $f-u_t=-t\Div(z)$ such that
\[\left \{\begin{array}{ll}&\int_\Omega (z,Du_t)=|u_t|_{\BV} \\&[z,\nu]=0\end{array}\right.
\]
\end{enumerate}
\end{theorem}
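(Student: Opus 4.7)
My plan is to use a ring of implications $(1)\Rightarrow(2)\Rightarrow(3)\Rightarrow(2)\Rightarrow(1)$, where the only nontrivial content beyond what is already proved lies in translating between the ``global'' pairing identity in (2) and the ``split'' pair of conditions in (3). For the equivalence $(1)\Leftrightarrow (2)$, I would invoke the minimum principle together with Theorem~\ref{th:equiv}: the functional $T(u)=\tfrac12\|f-u\|_{L_2}^2+tJ(u)$ is proper convex l.s.c.\ on $L_2(\Omega)$, so $u_t$ is a minimizer iff $0\in \partial T(u_t)=t\,\partial J(u_t)-(f-u_t)$, that is, iff $(f-u_t)/t\in \partial J(u_t)=\mathcal{A}(u_t)$. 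Unwinding the definition of $\mathcal{A}$, this says precisely that there exists $z\in X(\Omega)_2$ with $\|z\|_{L_\infty}\le 1$, $(f-u_t)/t=-\Div(z)$, and
\[
\int_\Omega (\phi-u_t)\cdot\frac{f-u_t}{t}\,dx=\int_\Omega(z,D\phi)-|u_t|_{\BV}
\]
for all $\phi\in \BV(\Omega)_2$, which is statement (2) after substituting $f-u_t=-t\Div(z)$.

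For $(2)\Rightarrow(3)$, I would first test the identity in (2) with $\phi=u_t$; the left-hand side vanishes, giving at once $\int_\Omega(z,Du_t)=|u_t|_{\BV}$. For the boundary condition, I would apply Theorem~\ref{trace} to both $\phi$ and $u_t$:
\[
\int_{\partial\Omega}[z,\nu]\,tr(\phi)\,dH^{d-1}=\int_\Omega\phi\,\Div(z)\,dx+\int_\Omega(z,D\phi),
\]
and similarly with $u_t$ in place of $\phi$. Substituting both into the rewritten form $\int_\Omega(z,D\phi)+\int_\Omega \phi\,\Div(z)\,dx=\int_\Omega u_t\,\Div(z)\,dx+|u_t|_{\BV}$ of (2) and using $\int_\Omega(z,Du_t)=|u_t|_{\BV}$, everything collapses to
\[
\int_{\partial\Omega}[z,\nu]\bigl(tr(\phi)-tr(u_t)\bigr)\,dH^{d-1}=0\qquad\text{for every }\phi\in\BV(\Omega)_2.
\]
Since (as used in the proof of Theorem~\ref{trace}) the trace map $\BV(\Omega)_2\to L_1(\partial\Omega)$ is surjective—any $u\in L_1(\partial\Omega)$ is realized by some $\hat w\in W^1(L_1(\Omega))$—the function $tr(\phi)-tr(u_t)$ ranges over a dense subset of $L_1(\partial\Omega)$, forcing $[z,\nu]=0$ in $L_\infty(\partial\Omega)$.

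For $(3)\Rightarrow(2)$, I would run the argument in reverse. Given $[z,\nu]=0$, Theorem~\ref{trace} applied to any $\phi\in\BV(\Omega)_2$ and to $u_t$ yields
\[
\int_\Omega(z,D\phi)=-\int_\Omega \phi\,\Div(z)\,dx,\qquad \int_\Omega(z,Du_t)=-\int_\Omega u_t\,\Div(z)\,dx.
\]
Combined with the hypothesis $\int_\Omega(z,Du_t)=|u_t|_{\BV}$, this gives $\int_\Omega u_t\,\Div(z)\,dx=-|u_t|_{\BV}$, and then
\[
-\int_\Omega \Div(z)(\phi-u_t)\,dx=\int_\Omega(z,D\phi)-|u_t|_{\BV},
\]
which is exactly the pairing identity in (2).

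The conceptually delicate step, and the one I expect to require the most care, is the passage between (2) and (3): one has to notice that the single ``interior'' identity in (2) quietly packages together both the energy saturation $\int_\Omega(z,Du_t)=|u_t|_{\BV}$ and the natural boundary condition $[z,\nu]=0$, and the only clean way to separate them is to bring in the Anzellotti pairing of Theorem~\ref{trace} and to exploit surjectivity of the trace onto $L_1(\partial\Omega)$. The monotone-operator theory and minimum principle take care of $(1)\Leftrightarrow(2)$ essentially mechanically once Theorem~\ref{th:equiv} is in hand.
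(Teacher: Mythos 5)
Your proposal is correct, and it uses the same essential ingredients as the paper (the minimum principle, Theorem~\ref{th:equiv}, and the Anzellotti pairing of Theorem~\ref{trace}), but it routes the implications differently, and the difference is worth noting. The paper proves $2\Rightarrow 1$ by a completely elementary convexity computation: from $\|z\|_{L_\infty}\le 1$ one gets $\int_\Omega(z,D\phi)\le|\phi|_{\BV}$, and the algebraic identity $\frac{1}{2t}\bigl((f-u_t)^2-(f-\phi)^2\bigr)\le\frac{1}{t}(f-u_t)(\phi-u_t)$ then gives minimality directly, with no appeal to $\partial J=\mathcal{A}$; your $(2)\Rightarrow(1)$ instead leans on the full maximal-monotonicity machinery (which is legitimate, since statement (2) is literally the unwound definition of $(f-u_t)/t\in\mathcal{A}(u_t)$, but it makes the easy direction depend on the hardest theorem in the chapter). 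Conversely, the paper extracts the boundary condition in the step $1\Rightarrow 3$ by first deriving $\int_\Omega v_t\phi\,dx=\int_\Omega(z,D\phi)$ for all $\phi$ and concluding $\langle z,\phi\rangle_{\partial\Omega}=0$, whereas you isolate it in $(2)\Rightarrow(3)$ via $\int_{\partial\Omega}[z,\nu](tr(\phi)-tr(u_t))\,dH^{d-1}=0$; these are the same computation in different clothing, and both rest on the same density fact about traces. One small point of care in your version: the extension $\hat w\in W^1(L_1(\Omega))$ of a given boundary datum need not lie in $L_2(\Omega)$, hence need not be an admissible test function in $\BV(\Omega)_2$; this is repaired by testing with $\phi=u_t+\psi$ for $\psi\in C^\infty(\bar\Omega)$, whose traces are dense in $L_1(\partial\Omega)$ (the paper glosses over the same point). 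Your $(3)\Rightarrow(2)$ coincides with the paper's.
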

\begin{proof}~\\
$2.\Rightarrow 1.$
Since $\|z\|_{L_\infty}\leq 1$, we have $\int_\Omega(z,D\phi)\leq |\phi|_{\BV}$. Then:
\[\int_{\Omega}\frac{1}{t}(f-u_t)(\phi-u_t)~dx \leq |\phi|_{\BV}-|u_t|_{\BV}.\]
Since
\begin{eqnarray*}\frac{1}{2t}((f-u_t)^2-(f-\phi)^2)&=&\frac{1}{t}(f-\frac{u_t+\phi}{2})(\phi-u_t)\\
&=&\frac{1}{t}(f-u_t)(\phi-u_t)-\frac{1}{2t}(\phi-u_t)^2\\
&\leq& \frac{1}{t} (f-u_t)(\phi-u_t),
\end{eqnarray*}
we have $\frac{1}{2t}\|f-u_t\|^2_{L_2}-\frac{1}{2t}\|f-\phi\|^2_{L_2}\leq |\phi|_{\BV}-|u_t|_{\BV}$ for any $\phi\in \BV(\Omega)_2$.
This tells us that $u_t$ is a minimizer for problem (\ref{equ:L2-BV}).\\
$3.\Rightarrow 2.$
By Green's formula and the boundary condition $[z,\nu]=0$, we have
\[-\int_\Omega\mbox{div}(z)(\phi-u_t)~dx=\int_\Omega(z,D\phi)-\int_\Omega(z,Du_t).\]
Plug in $\int_\Omega (z,Du_t)=|u_t|_{\BV}$. We get $-\int_\Omega\Div(z)(\phi-u_t)~dx=\int_\Omega(z,D\phi)-|u_t|_{\BV}$.\\
$1.\Rightarrow 3.$ Since $u_t$ is a minimizer for problem (\ref{equ:L2-BV}), we have $t \mathcal{A}(u_t)\ni (f-u_t)$. So there exists $z \in X(\Omega)_2$ with $\|z\|_{L_\infty}\leq 1$, $v_t=\frac{1}{t}(f-u_t)=-\Div(z)$ such that:
\[\left \{ \begin{array}{l}\int_\Omega v_t\phi~dx=\int_\Omega (z, D\phi) \mbox{~for any~}\phi\in \BV(\Omega)_2
\\ \int_\Omega (z,Du_t)=|u_t|_{\BV}\end{array}\right.\]
So $\langle z, \phi \rangle_{\partial \Omega}=\int_\Omega \phi\Div(z)~dx+\int_\Omega (z, D\phi)=0$ for any $\phi\in \BV(\Omega)_2$. Hence $[z,\nu]=0$.
\end{proof}
\begin{remark}
From Theorem \ref{th:char}, we can see that the Neumann boundary condition is a natural assumption for problem (\ref{equ:L2-BV}) and $\int_\Omega f-u_t~dx=\int_\Omega -t\Div(z)~dx=-\int_{\partial \Omega} t[z,\nu]~dH^{d-1}=0$ will automatically hold.
\end{remark}

In order to go further, we denote  the homogeneous part of $L_p(\Omega)$ by  $L^{p}_{\Box}(\Omega):=\{v\in L_{p}(\Omega):\int_{\Omega}v~dx=0\}$ and define $\dot X(\Omega)_{p}:=\{z\in X(\Omega)_p:[z,\nu]=0\}$.
As a rather deep result of  \cite{Bourgain}, Bourgain and Brezis prove that for every $v\in L^{p}_{\Box}(\Omega)$ with $p\geq d$, there exists  $z\in \dot X(\Omega)_{p}$ such that $v=\mbox{div}(z)$. This result is obviously not true for $p<d$ and so we introduce the
following norm for $v\in L^{p}_{\Box}(\Omega)$ with $1\leq p \leq \infty$:
\[\|v\|_{Y_p}:=\inf~{\{\liminf_{k\rightarrow \infty}{\|z_k\|_{L_{\infty}}}~:~\lim_{k\rightarrow \infty}\|\Div(z_k)-v\|_{L_p}=0,~z_{k}\in \dot{X}(\Omega)_{p}\}}\]
and  the corresponding normed vector space:
\[Y(\Omega)_{p}:=\{v \in L^{p}_{\Box}(\Omega)~:~\|v\|_{Y_p}<\infty\}.\]
Then we have the following characterization for $Y(\Omega)_{p}$:
\begin{theorem}
\label{th:Y_p}
For every $v\in Y(\Omega)_p$, there exists $z\in \dot{X}(\Omega)_p$ such that
\[v=\Div(z), ~~~\|z\|_{L_{\infty}}=\|v\|_{Y_p}.\]
In addition, the unit ball $U_{p}:=\{v\in L^{p}_{\Box}(\Omega):\|v\|_{Y_p}\leq 1\}$ is closed in $L_p$ norm topology.
\end{theorem}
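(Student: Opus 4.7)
The plan has two parts: first, produce an extremal $z$ attaining the infimum defining $\|v\|_{Y_p}$; second, deduce closedness of $U_p$ from a compactness argument almost identical to the one used in part one.

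For the attainment, I first unpack the definition. Given $v \in Y(\Omega)_p$ and $\varepsilon>0$, select a sequence $\{z_k\}\subset \dot X(\Omega)_p$ with $\|\Div(z_k)-v\|_{L_p}\to 0$ and $\liminf_k \|z_k\|_{L_\infty}\leq \|v\|_{Y_p}+\varepsilon$; after extracting a subsequence I may assume $\|z_k\|_{L_\infty}$ is uniformly bounded. Since $L_\infty(\Omega;\R^d)$ is the dual of the separable space $L_1(\Omega;\R^d)$, Banach--Alaoglu yields a further subsequence with $z_k\rightharpoonup^* z_\varepsilon$ in $L_\infty$, and weak-$*$ lower semi-continuity of the norm gives $\|z_\varepsilon\|_{L_\infty}\leq \|v\|_{Y_p}+\varepsilon$. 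Testing against $\nabla\phi$ with $\phi\in C^\infty_0(\Omega)$ shows $\Div(z_k)\to \Div(z_\varepsilon)$ in $\mathcal D'(\Omega)$, which combined with $\Div(z_k)\to v$ in $L_p$ forces $\Div(z_\varepsilon)=v$, so $z_\varepsilon\in X(\Omega)_p$. Finally, taking $\varepsilon_n\to 0$, I apply Banach--Alaoglu a second time to the sequence $\{z_{\varepsilon_n}\}$ and obtain a weak-$*$ limit $z$ with $\Div(z)=v$ and $\|z\|_{L_\infty}\leq \|v\|_{Y_p}$; the reverse inequality is automatic, since $z$ itself is an admissible competitor (via the constant sequence).

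The step I expect to require the most care is propagating the boundary condition $[z_k,\nu]=0$ to $z_\varepsilon$, because weak-$*$ convergence in $L_\infty$ provides no direct control on the weakly defined trace. My plan is to dualize through Theorem \ref{trace}(3): for every $w\in C^1(\bar\Omega)$ the identity
\[
\int_{\partial\Omega}[z_k,\nu]\,tr(w)\,dH^{d-1}=\int_\Omega w\,\Div(z_k)\,dx+\int_\Omega z_k\cdot\nabla w\,dx
\]
vanishes. The first term on the right passes to $\int_\Omega w v\,dx=\int_\Omega w\,\Div(z_\varepsilon)\,dx$ thanks to $L_p$ convergence of divergences, and the second passes to $\int_\Omega z_\varepsilon\cdot\nabla w\,dx$ by weak-$*$ convergence against $\nabla w\in L_1$. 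Hence $\langle z_\varepsilon,w\rangle_{\partial\Omega}=0$ for every $w\in C^1(\bar\Omega)$, and since the traces of such $w$ are dense in $L_1(\partial\Omega)$, Theorem \ref{trace}(3) forces $[z_\varepsilon,\nu]=0$, so $z_\varepsilon\in\dot X(\Omega)_p$.

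The closedness of $U_p$ is now essentially free. Given $v_n\in U_p$ with $v_n\to v$ in $L_p$, I invoke the already-proved attainment statement to pick $z_n\in\dot X(\Omega)_p$ with $\Div(z_n)=v_n$ and $\|z_n\|_{L_\infty}=\|v_n\|_{Y_p}\leq 1$. Exactly the same Banach--Alaoglu extraction produces a weak-$*$ limit $z\in L_\infty$ with $\|z\|_{L_\infty}\leq 1$, and the distributional/boundary arguments just described identify $\Div(z)=v$ and $[z,\nu]=0$. Consequently $\|v\|_{Y_p}\leq\|z\|_{L_\infty}\leq 1$, so $v\in U_p$.
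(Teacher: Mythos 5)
Your proof is correct and follows essentially the same route as the paper's: weak-$*$ compactness of bounded sets in $L_\infty(\Omega;\R^d)$, lower semi-continuity of the $L_\infty$ norm under weak-$*$ convergence, identification of $\Div(z)$ distributionally, and propagation of $[z,\nu]=0$ through the generalized Green identity of Theorem \ref{trace}. The only cosmetic difference is your two-stage $\varepsilon$-extraction, where the paper instead extracts a single sequence with $\|z_k\|_{L_\infty}\to\|v\|_{Y_p}$ directly from the definition of the infimum.
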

\begin{proof}
From the definition of the $\|\cdot\|_{Y_p}$ norm, we can find a sequence $\{z_k\}\subset \dot{X}(\Omega)_p$ such that:
\[\lim_{k\rightarrow \infty}\|\Div(z_k)-v\|_{L_p}=0,~~~\lim_{k\rightarrow \infty}\|z_k\|_{L_{\infty}}=\|v\|_{Y_p}.\]
Hence $\{\|z_k\|_{L_{\infty}}\}$ is bounded, so up to an extraction, we can find $z\in L_{\infty}(\Omega;\mathbb{R}^d)$ such that $z_k$ converges to $z$ weak-* in $L_{\infty}(\Omega;\mathbb{R}^d)$. Then for every $\phi \in C^{\infty}(\overline{\Omega})$, 
\begin{eqnarray*}\int_{\Omega}v \phi~dx&=&\lim_{k \rightarrow \infty}\int_{\Omega}\Div(z_k)\phi~dx=\lim_{k\rightarrow \infty}-\int_{\Omega}z_{k}\cdot \nabla \phi~dx\\
&=&-\int_{\Omega}z\cdot \nabla \phi~dx=\int_{\Omega}\Div(z)\phi~dx-\int_{\partial \Omega}[z,\nu]\phi~dH^{d-1}.
\end{eqnarray*}
Choose $\phi \in C^{\infty}_{0}(\Omega)$, we get $\mbox{div}(z)=v\in L_p(\Omega)$ in the sense of distribution. So for any $\phi \in C^{\infty}(\overline{\Omega})$, we have:
\[\int_{\partial \Omega}[z,\nu]\phi~dH^{d-1}=0.\]
Hence $[z,\nu]=0$~$H^{d-1}$ a.e. on $\partial \Omega$.

By weak-* lower semi-continuity of $\|\cdot\|_{L_\infty}$, we get:
\[\|z\|_{L_{\infty}}\leq \lim_{k \rightarrow \infty}\|z_k\|_{L_{\infty}}=\|v\|_{Y_p}.\]
By the definition of the $\|\cdot\|_{Y_p}$ norm, we have $\|z\|_{L_{\infty}}\geq \|v\|_{Y_p}$.
So $\|z\|_{L_{\infty}}=\|v\|_{Y_p}$.

Now let $\{v_n\}$ be a sequence in $U_p$ such that $v_n\rightarrow v$ for some $v\in L^{p}_{\Box}(\Omega)$, we want to show $v\in U_{p}$. Since $v_{n}=\Div(z_n)$ with $z_n\in \dot{X}(\Omega)_p$ and $\|v_{n}\|_{Y_p}=\|z_n\|_{L_\infty}$, we have $\|z_n\|_{L_\infty}\leq 1$. Thus we can find $z\in L_{\infty}(\Omega;\R^{d})$ such that, up to an extraction, $z_{n}\rightharpoonup z$ weak-* in $L_{\infty}(\Omega;\R^{d})$. So $\|z\|_{L_\infty}\leq \liminf_{n\rightarrow \infty}\|z_n\|_{L_\infty}\leq 1$. For any $\phi \in C^{\infty}(\overline{\Omega})$, we have: 
\[\int_{\Omega}v\phi~dx=\lim_{n\rightarrow \infty}\int_{\Omega}v_{n}\phi~dx=\lim_{n\rightarrow \infty}-\int_{\Omega}z_{n}\cdot \nabla \phi~dx=-\int_{\Omega}z\cdot \nabla \phi~dx.\]
Since $-\int_{\Omega}z\cdot \nabla \phi~dx=\int_{\Omega}\Div(z)\phi~dx-\int_{\partial \Omega}[z,\nu]\phi~dH^{d-1}$, pick $\phi \in C^{\infty}_{0}(\Omega)$, we get $v=\Div(z)$. Consequently, $[z,\nu]=0$. So $v\in U_p$.
\end{proof}

\begin{lemma}
\label{conv:weak}
For $\{v_n\}_{n=1}^{\infty}\subset L^p_{\Box}(\Omega)$ and $v\in L^{p}_{\Box}(\Omega)$ with:
\[\sup_{n\in \mathbb{N}}\|v_n\|_{L_p}<\infty~~{\rm and}~~\lim_{n\rightarrow \infty}\|v_n-v\|_{Y_p}=0,\]
we have $v_n \rightharpoonup v$ weakly in $L_p(\Omega)$.
\end{lemma}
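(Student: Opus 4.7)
The plan is to combine the bounded $L_p$ hypothesis with the existence of a divergence representation for $v_n-v$ supplied by Theorem~\ref{th:Y_p}. Concretely, by that theorem applied to each $v_n-v\in Y(\Omega)_p$, we can pick $z_n\in \dot X(\Omega)_p$ with
\[
v_n-v=\Div(z_n),\qquad \|z_n\|_{L_\infty}=\|v_n-v\|_{Y_p}\longrightarrow 0,\qquad [z_n,\nu]=0.
\]
This reduces the problem to showing that an $L_p$-bounded sequence whose difference from $v$ is the divergence of a vector field with vanishing normal trace and arbitrarily small $L_\infty$ norm must converge weakly to $v$.

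The weak convergence will follow from the standard subsequence principle (which applies when $1<p<\infty$ since $L_p$ is reflexive). First I would pick an arbitrary subsequence of $\{v_n\}$. By $\sup_n\|v_n\|_{L_p}<\infty$ and reflexivity, I can extract a further subsequence, still denoted $\{v_{n_k}\}$, such that $v_{n_k}\rightharpoonup w$ weakly in $L_p(\Omega)$ for some $w\in L_p(\Omega)$. The task is then to identify $w=v$.

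To identify the limit I would test against $\phi\in C^\infty(\bar\Omega)$. Using the divergence identity and the boundary condition $[z_{n_k},\nu]=0$ coming from $z_{n_k}\in\dot X(\Omega)_p$, together with Theorem~\ref{trace}, gives
\[
\int_\Omega (v_{n_k}-v)\phi\,dx=\int_\Omega \Div(z_{n_k})\phi\,dx=-\int_\Omega z_{n_k}\cdot\nabla\phi\,dx,
\]
and the right-hand side is bounded in absolute value by $\|z_{n_k}\|_{L_\infty}\|\nabla\phi\|_{L_1}\to 0$. On the other hand, weak convergence in $L_p$ gives $\int_\Omega v_{n_k}\phi\,dx\to \int_\Omega w\phi\,dx$. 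Combining the two,
\[
\int_\Omega w\phi\,dx=\int_\Omega v\phi\,dx\qquad\text{for every }\phi\in C^\infty(\bar\Omega).
\]
Since $C^\infty(\bar\Omega)$ is dense in $L_{p'}(\Omega)$ (with $p'$ the conjugate exponent), this forces $w=v$. Because every subsequence of $\{v_n\}$ has a sub-subsequence weakly converging to the same limit $v$, the full sequence converges weakly to $v$ in $L_p(\Omega)$.

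The only genuine difficulty is bookkeeping the boundary term: one must be sure that the identity $\int_\Omega\Div(z_{n_k})\phi\,dx=-\int_\Omega z_{n_k}\cdot\nabla\phi\,dx$ holds up to the boundary for $\phi\in C^\infty(\bar\Omega)$, and this is exactly what the $[z_{n_k},\nu]=0$ condition built into $\dot X(\Omega)_p$ is designed to give via Theorem~\ref{trace}. Once that observation is in place, the rest is the routine subsequence/test-function argument above. (In the endpoint case $p=1$, one would instead use weak compactness from the Dunford--Pettis theorem or work with $\phi\in L_\infty$ directly; the primary case $1<p<\infty$ covers the applications we need.)
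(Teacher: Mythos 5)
Your proof is correct and follows essentially the same route as the paper: represent $v_n-v=\Div(z_n)$ with $\|z_n\|_{L_\infty}=\|v_n-v\|_{Y_p}\to 0$ via Theorem \ref{th:Y_p}, extract a weakly convergent subsequence from the $L_p$-bounded family, identify the limit by integrating by parts against smooth test functions using $[z_n,\nu]=0$ and Theorem \ref{trace}, and conclude by the subsequence principle. The only (immaterial) difference is your choice of test class $C^\infty(\bar\Omega)$ versus the paper's $W^{1}(L_1(\Omega))\cap L_q(\Omega)$, and your explicit caveat about the non-reflexive endpoint $p=1$, which the paper passes over silently.
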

\begin{proof}
Since $\{v_n\}_{n=1}^{\infty}$ is bounded in $L_{p}(\Omega)$, we can extract a subsequence $\{v_{n_k}\}_{k=1}^{\infty}$ such that:
\begin{equation}
\label{conv:weak-1}
v_{n_k} \rightharpoonup \hat{v}~~~\mbox{weakly~in~} L_p(\Omega)
\end{equation}
for some $\hat{v}\in L^p_{\Box}(\Omega)$. Since $\lim_{n\rightarrow \infty}\|v_n-v\|_{Y_p}=0$, without loss of generality, we can assume $\sup_{n\in \N}\|v_n-v\|_{Y_p}<\infty$. By Theorem \ref{th:Y_p}, we can find $\{z_n\}\subset \dot{X}(\Omega)_p$ such that
\[v_n-v=\Div(z_n) {\rm ~~and~~}\|z_n\|_{L_\infty}=\|v_n-v\|_{Y_p}.\]

Since $[z_n, \nu]=0$, combining Theorem \ref{trace}, we have:
\begin{eqnarray*}
\int_{\Omega}\mbox{div}(z_{n_k})\phi~dx&=&-\int_{\Omega}z_{n_k}\cdot \nabla \phi~dx+\int_{\partial \Omega}[z_{n_k},\nu]tr(\phi)~dH^{d-1}\\
&=&-\int_{\Omega}z_{n_k}\cdot \nabla \phi~dx
\end{eqnarray*}
for all $\phi \in W^{1}(L_1(\Omega))\cap L^{q}(\Omega)$. Since $\lim_{k\rightarrow \infty}\|z_{n_k}\|_{L_\infty}=0$, we have:
\[\lim_{k\rightarrow \infty}\int_{\Omega}(v_{n_k}-v)\phi~dx=-\lim_{k\rightarrow \infty}\int_{\Omega}z_{n_k}\cdot \nabla \phi~dx=0.\]
This together with (\ref{conv:weak-1}) shows $v=\hat{v}$. Consequently, every subsequence of $\{v_n\}_{n=1}^{\infty}$ has in turn a weakly convergent subsequence with limit $v$. This implies that $v_n \rightharpoonup v$ weakly in $L_p(\Omega)$.
\end{proof}

With these preliminaries in hand, I am able to give two characterizations of the minimizing pair $(u_t,v_t)$.
The first is the following.
\begin{theorem}
\label{th:L2-BV}
Let $(u_{t},v_{t})$ be the minimizing pair of problem (\ref{equ:L2-BV-pair}) and $\bar{f}:=\frac{1}{|\Omega|}\int_{\Omega}f~dx$. We have:
\begin{description}
\item[(i)~] $\|f-\bar{f}\|_{Y_2} \leq t \Leftrightarrow u_{t}=\bar{f}$.
\item[(ii)] $\|f-\bar{f}\|_{Y_2} \geq t \Leftrightarrow \|v_{t}\|_{Y_2} = t$ and $\int_{\Omega} v_{t}u_{t}~dx=t|u_{t}|_{\BV}$.
\end{description}
\end{theorem}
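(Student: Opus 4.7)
The plan is to derive both assertions from Theorem~\ref{th:char}, which supplies for the minimizer $u_t$ a field $z\in X(\Omega)_2$ with $\|z\|_{L_\infty}\le 1$, $[z,\nu]=0$, $f-u_t=-t\Div(z)$, and the extremal identity $\int_\Omega(z,Du_t)=|u_t|_{\BV}$, combined with Theorem~\ref{th:Y_p}, which for any $v\in Y(\Omega)_2$ supplies $w\in\dot X(\Omega)_2$ with $\Div(w)=v$ and $\|w\|_{L_\infty}=\|v\|_{Y_2}$. These two tools will do almost all of the work.

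For (i), the implication $u_t=\bar f\Rightarrow\|f-\bar f\|_{Y_2}\le t$ is immediate: the optimal $z$ yields $f-\bar f=\Div(-tz)$ with $-tz\in\dot X(\Omega)_2$ and $\|-tz\|_{L_\infty}\le t$. For the converse, given $\|f-\bar f\|_{Y_2}\le t$ I would extract $z'\in\dot X(\Omega)_2$ from Theorem~\ref{th:Y_p} with $\Div(z')=f-\bar f$ and $\|z'\|_{L_\infty}\le t$, set $z:=-z'/t$, and verify Theorem~\ref{th:char}(3) for the candidate $\bar f$: the pairing $\int_\Omega(z,D\bar f)=0=|\bar f|_{\BV}$ is trivial, while $[z,\nu]=0$ and $f-\bar f=-t\Div(z)$ hold by construction. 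Uniqueness of the minimizer (Theorem~\ref{min}) then forces $u_t=\bar f$.

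For (ii), I would first establish two facts that hold regardless of $\|f-\bar f\|_{Y_2}$: the bound $\|v_t\|_{Y_2}\le t$ (from $v_t=\Div(-tz)$), and the identity $\int_\Omega v_tu_t\,dx=t|u_t|_{\BV}$, obtained by integrating $v_t=-t\Div(z)$ against $u_t$, using $[z,\nu]=0$ with Theorem~\ref{trace} to rewrite $\int_\Omega u_t\Div(z)\,dx=-\int_\Omega(z,Du_t)$, and then substituting the extremal relation. What remains is the sharp equality $\|v_t\|_{Y_2}=t$ and the reverse direction. The boundary case $\|f-\bar f\|_{Y_2}=t$ reduces to (i), since then $u_t=\bar f$ and $v_t=f-\bar f$. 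The decisive case is $\|f-\bar f\|_{Y_2}>t$: (i) gives $u_t\ne\bar f$, forcing $|u_t|_{\BV}>0$ (otherwise $u_t$ would be a constant with the same mean as $f$, i.e., $\bar f$); assuming for contradiction $\|v_t\|_{Y_2}<t$, Theorem~\ref{th:Y_p} produces $w\in\dot X(\Omega)_2$ with $v_t=\Div(w)$ and $\|w\|_{L_\infty}<t$, and a further integration by parts together with Theorem~\ref{measure} gives
\[
t|u_t|_{\BV}=\int_\Omega v_tu_t\,dx=-\int_\Omega(w,Du_t)\le\|w\|_{L_\infty}|u_t|_{\BV}<t|u_t|_{\BV},
\]
a contradiction. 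The reverse direction of (ii) is by contraposition against (i): if $\|f-\bar f\|_{Y_2}<t$, then $u_t=\bar f$ and hence $\|v_t\|_{Y_2}=\|f-\bar f\|_{Y_2}<t$, violating the assumed equality.

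I expect the main obstacle to be precisely this strict equality $\|v_t\|_{Y_2}=t$ when $\|f-\bar f\|_{Y_2}>t$; it is here that the extremal pairing $\int_\Omega(z,Du_t)=|u_t|_{\BV}$ must be played sharply against any alternative representative of $v_t$ with smaller sup-norm, and where the Neumann condition $[z,\nu]=0$ is essential so that the integration by parts of Theorem~\ref{trace} produces no boundary terms. Every other step is essentially a rearrangement of the characterization in Theorem~\ref{th:char}.
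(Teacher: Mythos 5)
Your proposal is correct and follows essentially the same route as the paper: both rest on the characterization of Theorem~\ref{th:char} (the field $z$ with $\|z\|_{L_\infty}\le 1$, $[z,\nu]=0$, and the extremal pairing $\int_\Omega(z,Du_t)=|u_t|_{\BV}$) together with the attainment result of Theorem~\ref{th:Y_p}, and both obtain the sharp equality $\|v_t\|_{Y_2}=t$ by playing the extremal pairing against a hypothetical representative of $v_t$ with smaller sup-norm. If anything, your write-up is more complete than the paper's, which argues via the dichotomy $|u_t|_{\BV}=0$ versus $\|z\|_{L_\infty}=1$ and leaves the converse implications and the boundary case implicit.
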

\begin{proof}
$u_t$ is a minimizer for problem (\ref{equ:L2-BV}) is equivalent to say $(f-u_t)\in t\mathcal{A}(u_t)$. So there exists $z\in X(\Omega)_2$ with $\|z\|_{L_\infty}\leq 1$, $f-u_t=-t\Div(z)$ in $\mathcal{D'}(\Omega)$ such that $\int_{\Omega}(z,Du_t)=|u_t|_{\BV}$.
Since $\int_{\Omega}(z,Du_t)\leq \|z\|_{L_\infty}|u_t|_{\BV}\leq |u_t|_{\BV}$, the equality holds when $|u_t|_{\BV}=0$ or $\|z\|_{L_\infty}=1$.
\begin{enumerate}
\item When $|u_t|_{\BV}=0$, we have $u={\rm constant}$. To minimize $\|f-u_t\|_{L_2}$, we get $u_t=\frac{1}{|\Omega|}\int_{\Omega}f~dx$. So $\|f-\bar{f}\|_{Y_2}=\|-t\Div(z)\|_{Y_2}\leq t\|z\|_{L_\infty}\leq t$.
\item When $|u_t|_{\BV}>0$, we have $\|z\|_{L_\infty}=1$. So we have 
\[\int_{\Omega}v_tu_t~dx=-t\int_{\Omega}\Div(z)u_t~dx=t\int_{\Omega}(z,Du_t)=t|u_t|_{\BV}.\] 
And we claim $\|v_t\|_{Y_2}= t\|z\|_{L_\infty}= t$. Otherwise, there exists $\hat{z}\in \dot{X}(\Omega)_2$ with $v_t=-t\Div(\hat{z})$ and $\|\hat{z}\|_{L_\infty}< 1$, then $\int_{\Omega}v_tu_t~dx=t\int_{\Omega}(\hat{z},Du_t)<t|u_t|_{\BV}$, which contradicts our previous statement.
\end{enumerate}
\end{proof}
\begin{remark}
The above theorem is a generalization of Yves  Meyer's result in  \cite{Meyer} which was proved for the special case $\Omega=\R^{2}$ by using techniques from harmonic analysis.
\end{remark}

Antonin Chambolle  \cite{Chambolle} has introduced another type of characterization  for  a   finite dimensional  minimization  problem related to  (\ref{equ:L2-BV-pair}).  I have generalized his result  to the case $(L_2,\BV)$ as a consequence of theorem \ref{th:L2-BV}.

\begin{theorem}
\label{th:BV-proj}
Given $f\in L_2(\Omega)$, the minimizing pair for problem (\ref{equ:L2-BV-pair}) is $(u_{t},v_{t})=(f-\pi_{t U_{2}}(f),\pi_{tU_{2}}(f))$, where $\pi_{t U_{2}}(f)$ is the $L_2$ projection of $f$ onto the set $t U_{2}$.
\end{theorem}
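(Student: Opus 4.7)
The plan is to exploit the standard variational characterization of the $L_2$ projection onto a closed convex set, together with Theorem \ref{th:L2-BV}. The unit ball $U_2$ is convex (since $\|\cdot\|_{Y_2}$ is a norm on $L^{2}_{\Box}(\Omega)$) and closed in the $L_2$ topology by Theorem \ref{th:Y_p}, so $tU_2$ is a closed convex subset of $L_2(\Omega)$; hence $\pi_{tU_2}(f)$ exists, is unique, and is characterized by $\pi_{tU_2}(f)\in tU_2$ together with
\[
\int_\Omega (f - \pi_{tU_2}(f))(w - \pi_{tU_2}(f))\, dx \leq 0 \qquad \text{for every } w\in tU_2.
\]
It therefore suffices to verify that the minimizer $v_t$ of (\ref{equ:L2-BV-pair}) satisfies both conditions.

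First, I will check that $v_t \in tU_2$. In case (i) of Theorem \ref{th:L2-BV} this is immediate from $v_t = f - \bar f$ together with the hypothesis $\|f - \bar f\|_{Y_2} \leq t$; in case (ii) it is the explicit identity $\|v_t\|_{Y_2} = t$. Since Theorem \ref{min} guarantees $\int_\Omega v_t\, dx = 0$, so that $v_t \in L^{2}_{\Box}(\Omega)$, we obtain $v_t \in tU_2$ in either case.

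Writing $u_t := f - v_t$, the projection inequality becomes $\int_\Omega u_t(w - v_t)\, dx \leq 0$. The key identity $\int_\Omega u_t v_t\, dx = t|u_t|_{\BV}$ is part of case (ii) of Theorem \ref{th:L2-BV} when $|u_t|_{\BV}>0$, and in case (i) both sides collapse to $0$ (since $u_t = \bar f$ is constant and $v_t \in L^{2}_{\Box}(\Omega)$). To bound the remaining term, fix an arbitrary $w \in tU_2$; by Theorem \ref{th:Y_p} there exists $\tilde z \in \dot X(\Omega)_2$ with $w = \Div(\tilde z)$, $\|\tilde z\|_{L_\infty} = \|w\|_{Y_2}\leq t$ and $[\tilde z, \nu] = 0$. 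Applying the integration-by-parts identity of Theorem \ref{trace} (whose boundary term vanishes) and then the pointwise estimate of Theorem \ref{measure} gives
\[
\int_\Omega u_t w\, dx \;=\; \int_\Omega u_t \Div(\tilde z)\, dx \;=\; -\int_\Omega (\tilde z, D u_t) \;\leq\; \|\tilde z\|_{L_\infty}\,|u_t|_{\BV} \;\leq\; t\,|u_t|_{\BV}.
\]
Combining with the key identity yields $\int_\Omega u_t(w - v_t)\, dx \leq 0$ for every $w \in tU_2$, which is the projection property. By uniqueness of the projection, $v_t = \pi_{tU_2}(f)$, and consequently $u_t = f - \pi_{tU_2}(f)$.

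The argument is essentially a bookkeeping step once Theorems \ref{th:L2-BV}, \ref{th:Y_p}, \ref{trace} and \ref{measure} are in hand. The only mildly subtle point is unifying cases (i) and (ii) of Theorem \ref{th:L2-BV}; this is handled cleanly by observing that the identity $\int_\Omega u_t v_t\,dx = t|u_t|_{\BV}$ reduces to $0 = 0$ precisely when $u_t$ is constant, so no separate treatment of case (i) is needed.
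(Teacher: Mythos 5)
Your proof is correct and follows essentially the same route as the paper: verify $v_t\in tU_2$ via Theorem \ref{th:L2-BV}, represent an arbitrary $w\in tU_2$ as a divergence using Theorem \ref{th:Y_p}, integrate by parts to bound $\int_\Omega u_t w\,dx$ by $t|u_t|_{\BV}$, and combine with the identity $\int_\Omega u_t v_t\,dx = t|u_t|_{\BV}$ to obtain the variational inequality characterizing the projection. Your unified treatment of the degenerate case $u_t=\bar f$ (where both sides of the key identity vanish) is a slightly cleaner bookkeeping than the paper's separate case split, but the substance is identical.
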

\begin{proof}
~\\
1. When $\|f-\bar{f}\|_{Y_2} \leq t$, we have $v_t=\pi_{t U_2}(f)=f-\bar{f}\in t U_2$.\\
2. When $\|f-\bar{f}\|_{Y_2} \geq t$, by Theorem \ref{th:L2-BV}, we have $\|v_t\|_{Y_2}= t$
and $\int_{\Omega}v_tu_t~dx=t|u_t|_{\BV}$. For any $w\in tU_2$, there exists $z\in \dot{X}(\Omega)_2$ such that $w=-\mbox{div}(z)$ and $\|z\|_{L_\infty}=\|w\|_{Y_2}\leq t$. Thus
\[\int_{\Omega}w(f-v_t)~dx=\int_{\Omega}wu_t~dx= \int_{\Omega} -\mbox{div}(z)u_t~dx=\int_{\Omega}(z,Du_t).\]
So 
\[\int_{\Omega}w(f-v_t)~dx\leq \|z\|_{L_\infty}|u_t|_{\BV}\leq t |u_t|_{\BV}.\]
Thus we have:
\[\int_{\Omega}(w-v_t)(f-v_t)~dx\leq t |u_t|_{\BV}-t|u_t|_{\BV}=0\mbox{~for any~}w\in tU_2.\]
So we have $v_t=\pi_{tU_2}(f)$ and $u_t=f-\pi_{tU_2}(f)$.
\end{proof}

Since we have characterized the minimizing pair $(u_t,v_t)$, we can now give an alternative expression for the K-functional:
\[K(f,t):= \inf_{f=u+v} \{\frac{1}{2}\|v\|^{2}_{L_2}+t|u|_{\BV}\}.\]
\begin{theorem}
\label{K-characterization}
\[K(f,t)=\int_{\Omega}\pi_{t U_{2}}(f)f~dx-\frac{1}{2}\|\pi_{t U_{2}}(f)\|^2_{L_2}.\]
\end{theorem}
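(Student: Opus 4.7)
The plan is to plug the characterization of the minimizing pair obtained in the preceding theorems directly into the definition of $K(f,t)$. By Theorem \ref{th:BV-proj}, the minimizer is $(u_t,v_t)=(f-\pi_{tU_2}(f),\pi_{tU_2}(f))$, so
$$K(f,t)=\tfrac{1}{2}\|v_t\|^2_{L_2}+t|u_t|_{\BV}.$$
The key identity I need is the pairing relation $\int_\Omega v_t u_t\,dx=t|u_t|_{\BV}$, because it lets me convert the $\BV$ seminorm (which I cannot evaluate directly) into an $L_2$ inner product of quantities that only involve $f$ and the projection $\pi_{tU_2}(f)$.

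First I would verify this pairing relation in both regimes of Theorem \ref{th:L2-BV}. In the case $\|f-\bar{f}\|_{Y_2}\geq t$, it is exactly part (ii) of Theorem \ref{th:L2-BV}. In the case $\|f-\bar{f}\|_{Y_2}\leq t$, part (i) gives $u_t=\bar{f}$, so both sides vanish: the right-hand side because $|u_t|_{\BV}=0$, and the left-hand side because $\int_\Omega v_t u_t\,dx=\bar{f}\int_\Omega(f-\bar{f})\,dx=0$. Hence the identity holds unconditionally.

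Next, using $u_t=f-v_t$, I would rewrite the pairing as
$$t|u_t|_{\BV}=\int_\Omega v_t(f-v_t)\,dx=\int_\Omega v_t f\,dx-\|v_t\|^2_{L_2}.$$
Substituting this into the expression for $K(f,t)$ above gives
$$K(f,t)=\tfrac{1}{2}\|v_t\|^2_{L_2}+\int_\Omega v_t f\,dx-\|v_t\|^2_{L_2}=\int_\Omega v_t f\,dx-\tfrac{1}{2}\|v_t\|^2_{L_2},$$
and replacing $v_t$ by $\pi_{tU_2}(f)$ yields the desired formula.

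There is no real obstacle here: the deep work has already been done in establishing the pairing $\int_\Omega v_t u_t\,dx=t|u_t|_{\BV}$ (which encodes both the Euler--Lagrange equation and the equality case $\int_\Omega(z,Du_t)=|u_t|_{\BV}$ of Theorem \ref{th:char}) and in identifying $v_t$ with the projection $\pi_{tU_2}(f)$. The only thing to watch is that the pairing identity is valid in the trivial regime $\|f-\bar{f}\|_{Y_2}\leq t$ as well, which is why I treat both cases of Theorem \ref{th:L2-BV} at the outset.
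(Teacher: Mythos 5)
Your proof is correct and follows essentially the same route as the paper: identify $(u_t,v_t)=(f-\pi_{tU_2}(f),\pi_{tU_2}(f))$, invoke the pairing identity $\int_\Omega v_t u_t\,dx=t|u_t|_{\BV}$, and simplify. Your explicit check that the pairing identity also holds in the regime $\|f-\bar f\|_{Y_2}\le t$ (where both sides vanish) is a small point the paper glosses over, but the argument is otherwise identical.
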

\begin{proof}
From Theorem \ref{th:L2-BV}, we know that $(u_t,v_t)=(f-\pi_{tU_2}(f),\pi_{tU_2}(f))$ and $\int_{\Omega}v_{t}u_{t}~dx=t |u_t|_{\BV}$. So
\[K(f,t)=\frac{1}{2}\|\pi_{t U_2}(f)\|^2_{L_2}+\int_{\Omega}(f-\pi_{t U_2}(f))\pi_{t U_2}(f)~dx=\int_{\Omega}\pi_{t U_2}(f)f~dx-\frac{1}{2}\|\pi_{t U_2}(f)\|^2_{L_2}.\]
\end{proof}

By using Theorem \ref{th:char}, we can calculate minimizers explicitly for some simple cases.
\begin{example}
Let $\Omega=B(0,R)=\{x\in \mathbb{R}^d:|x|\leq R\}$ and $f=\Chi_{B(0,r)}$ with $0<r<R$. When $0\leq t(|\partial B(0,r)|/|B(0,r)|+|\partial B(0,r)|/(|B(0,R)|-|B(0,r)|)) \leq 1$, we have:
\begin{eqnarray*}
u_t&=&(1-t\frac{d}{ r})\Chi_{B(0,r)}+t\frac{d\cdot r^{d-1}}{R^d-r^d} \Chi_{B(0,R)\setminus B(0,r)}\\
&=&(1-t\frac{|\partial B(0,r)|}{|B(0,r)|})\Chi_{B(0,r)}+t\frac{|\partial B(0,r)|}{|B(0,R)|-|B(0,r)|}\Chi_{B(0,R)\setminus B(0,r)}
\end{eqnarray*}
and
\begin{eqnarray*}
v_t&=&t\frac{d}{ r}\Chi_{B(0,r)}-t\frac{d\cdot r^{d-1}}{R^d-r^d} \Chi_{B(0,R)\setminus B(0,r)}\\
&=&t \frac{|\partial B(0,r)|}{|B(0,r)|}\Chi_{B(0,r)}-t \frac{|\partial B(0,r)|}{|B(0,R)|-|B(0,r)|}\Chi_{B(0,R)\setminus B(0,r)}.
\end{eqnarray*}
When $t(|\partial B(0,r)|/|B(0,r)|+|\partial B(0,r)|/(|B(0,R)|-|B(0,r)|))>1$, we have:
\begin{eqnarray*}
u_t&=&\frac{r^d}{R^d}\Chi_{B(0,R)}\\
&=&\frac{|B(0,r)|}{|B(0,R)|}\Chi_{B(0,R)}
\end{eqnarray*}
and
\begin{eqnarray*}v_t&=&\Chi_{B(0,r)}-\frac{r^d}{R^d}\Chi_{B(0,R)}\\
&=&\Chi_{B(0,r)}-\frac{|B(0,r)|}{|B(0,R)|}\Chi_{B(0,R)}.
\end{eqnarray*}
\end{example}
\begin{proof}
We look for the minimizer $u_t$ with the form $u_t=a \Chi_{B(0,r)}+b \Chi_{B(0,R)\setminus B(0,r)}$. Then $t \mbox{div}(z)=-(f-u_t)=(a-1) \Chi_{B(0,r)}+b \Chi_{B(0,R)\setminus B(0,r)}$.
We take $z=\frac{(a-1)}{td}x$ for $x\in B(0,r)$, then $u_t=f+t \mbox{div}(z)=a$ for $x\in B(0,r)$. To construct $z$ in $B(0,R)\setminus B(0,r)$, we will look for $z$ with the form
$z=\rho(|x|)\frac{x}{|x|}$. Since $\|z\|_{L_\infty}\leq 1$, we need $\rho(r)=-1$, this tells us $\frac{(a-1)}{td}=-\frac{1}{r}$. So $a=1-\frac{td}{r}$. To make $[z,\nu]=0$, we require $\rho(R)=0$. Since
\[\mbox{div}(z)=\nabla \rho(|x|)\cdot \frac{x}{|x|}+\rho(|x|)\mbox{div}(\frac{x}{|x|})=\rho'(|x|)+\rho(|x|)\frac{d-1}{|x|},\]
we must have:
\[\left \{\begin{array}{l}t(\rho'(s)+\rho(s)\frac{d-1}{s})=b~\mbox{for}~r<s<R\\ \rho(r)=-1\\ \rho(R)=0
\end{array}\right.\]
Solve this ODE, we get: $\rho(s)=-\frac{R^d r^{d-1}}{R^d-r^d}s^{1-d}+\frac{r^{d-1}}{R^{d}-r^{d}}s$ and $b=t\frac{d\cdot r^{d-1}}{R^d-r^d}$. In addition, $\rho'(s)=((d-1)(\frac{R}{s})^d+1)\frac{r^{d-1}}{R^d-r^d}\geq 0$, so $-1\leq \rho(s)\leq 0$ for $r<s<R$.

Thus, $u_t=(1-t\frac{d}{r})\Chi_{B(0,r)}+t\frac{d\cdot r^{d-1}}{R^d-r^d} \Chi_{B(0,R)\setminus B(0,r)}$ and $v_t=t\frac{d}{ r}\Chi_{B(0,r)}-t\frac{ d\cdot r^{d-1}}{R^d-r^d} \Chi_{B(0,R)\setminus B(0,r)}$.

To show $u_t$ is a minimizer for (\ref{equ:L2-BV}), we only need to check whether $\int_{\Omega}(z,Du_t)=|u_t|_{\BV}$. By Green's formula, we have:\begin{eqnarray*}\int_{\Omega}(z,Du_t)&=&-\int_{\Omega}\mbox{div}(z)u_t~dx=\frac{1}{t} \int_{\Omega}v_t u_t~dx\\
&=&\frac{1}{t}\{\int_{B(0,r)}(1-t\frac{d}{r})t\frac{d}{r}~dx+\int_{B(0,R)\setminus B(0,r)}t\frac{d\cdot r^{d-1}}{R^d-r^d}(-t\frac{d\cdot r^{d-1}}{R^d-r^d})~dx\}\\
&=&(1-t\frac{d}{r})\frac{d}{r}|B(0,r)|-t\frac{d\cdot r^{d-1}}{R^d-r^d}\frac{d\cdot r^{d-1}}{R^d-r^d}(|B(0,R)|-|B(0,r)|)\\
&=&(1-t\frac{d}{ r}-t\frac{d\cdot r^{d-1}}{R^d-r^d})H^{d-1}(\partial B(0,r))\\
&=&|u_t|_{\BV}.
\end{eqnarray*}
The above equality makes sense only when $1-t(\frac{d}{r}+\frac{d\cdot r^{d-1}}{R^d-r^d})\geq 0$, which means 
\[0\leq t(|\partial B(0,r)|/|B(0,r)|+|\partial B(0,r)|/(|B(0,R)|-|B(0,r)|))\leq 1.\]
\end{proof}

\begin{example}
Let $f(x)=x$ for $x\in [0,1]$.\\
For $0\leq t \leq \frac{1}{8}$, we have:
\[u_{t}=\left \{\begin{array}{ll}\sqrt{2t}&0\leq x\leq \sqrt{2t}\\x& \sqrt{2t}\leq x\leq 1-\sqrt{2t}\\1-\sqrt{2t}&1-\sqrt{2t}\leq x \leq 1\end{array}\right.\]
For $t > \frac{1}{8}$, we have: $u_{t}=\frac{1}{2}$ for $x\in [0,1]$.
\end{example}
\begin{proof}
Since $tz'=-v_t$ and $\|z\|_{L_\infty}\leq 1$, we look for $z$ with the following structure:
\[z=\left \{\begin{array}{ll}-\frac{1}{2t}(x-h)^2+1&0\leq x\leq h\\
1&h\leq x\leq 1-h\\
-\frac{1}{2t}(x-(1-h))^2+1&1-h\leq x \leq 1\end{array}\right.\]
We need to choose $h$ such that $z(0)=z(1)=0$(Neumann Boundary Condition). So we have $1-\frac{h^2}{2t}=0$. Hence $h=\sqrt{2t}$. But we also require $h\leq \frac{1}{2}$, so $\sqrt{2t}\leq \frac{1}{2}$.\\
Thus for $t \leq \frac{1}{8}$, we have:
\[z=\left \{\begin{array}{ll}-\frac{1}{2t}(x-\sqrt{2t})^2+1&0\leq x\leq \sqrt{2t}\\
1&\sqrt{2t}\leq x\leq 1-\sqrt{2t}\\
-\frac{1}{2t}(x-(1-\sqrt{2t}))^2+1&1-\sqrt{2t}\leq x \leq 1\end{array}\right.\]
Then we have:
\[u_{t}=\left \{\begin{array}{ll}\sqrt{2t}&0\leq x\leq \sqrt{2t}\\x& \sqrt{2t}\leq x\leq 1-\sqrt{2t}\\1-\sqrt{2t}&1-\sqrt{2t}\leq x \leq 1\end{array}\right.\]
For $t > \frac{1}{8}$, we have $z=-4(x-\frac{1}{2})^2+1$ for $x\in [0,1]$. Then we have: $u_{t}=\frac{1}{2}$ for $x\in [0,1]$.
To show $u_t$ is a minimizer, we only need to check $\int_{\Omega}(z,Du_{t})=|u_{t}|_{\BV}$:
\[\int_{0}^{1}(z,Du_{t})=\int_{h}^{1-h}z~dx=1-2h=|u_{t}|_{\BV}.\]
\end{proof}

\section{Multiscale Decompositions}
The solution of minimization problems like  (\ref{equ:L2-BV-pair}) leads to multiscale decompositions of a general function $f$.  In the case we have been considering, each $f\in L_2(\Omega)$ is decomposed as $f=\sum_{k=0}^\infty w_k$ where each $w_k$ is viewed as providing the detail of $f$ at some scale.
Currently, there are several ways to achieve this goal.    The most common of these is to use a standard telescoping  decomposition where $w_k:= u_{t_k}-u_{t_{k-1}}$ and $t_k=2^{-k}$.   Other  approaches to obtain multiscale decompositions were given by  Eitan Tadmor et al.'s work (see \cite{Tadmor})  and Stanely Osher et al.'s work (see \cite{Osher2}).

Since $\|v_t\|_{Y_2}$ for problem (\ref{equ:L2-BV-pair}) depends on  the parameter $t$, this gives  us a way of decomposing a given function $f\in L_2(\Omega)$ into different components based on the size of the $\|\cdot\|_{Y_2}$ norm of each component.
This approach falls into a category of methods (called {\it Inverse Scale Space Methods}) that  were introduced by Groetsch and Scherzer in \cite{Groetsch}.  It centers on using the above $\|\cdot\|_{Y_p}$ norm to measure the oscillation of $v_t$ in a cetain sense.
In our language, the choice of components takes  the following form:
\begin{equation}
\label{MsD:1}
u_{k+1}:=\argmin~\{\frac{1}{2}\|f-u\|^2_{L_2}+t_{k}J(u,u_k)\},
\end{equation}
where $\frac{1}{2}\|f-u\|^2_{L_2}$ is the $L_2$-norm fit-to-data term and $J(u,u_k)$ is a regularization term. Typically we initialize $u_0=0$ or $u_0=\frac{1}{|\Omega|}\int_{\Omega}f~dx$ and we require that $\{u_k\}$ satisfies the {\it inverse fidelity} property:
\[\lim_{k\rightarrow \infty}\|f-u_k\|_{L_2}\rightarrow 0.\]

If, as a special case, we consider $\BV$ minimization and choose $J(u,u_k)$ as the Bregman distance  defined by
\[D(u,u_k):=|u|_{\BV}-|u_k|_{\BV}-\int_{\Omega}s(u-u_k)~dx,\]
where $s\in \partial(|u_k|_{\BV})$, then (\ref{MsD:1}) becomes the method  introduced by Osher et.al. in \cite{Osher2}. This method has many promising properties for image denoising which were proved in \cite{Osher2}.

The interpretation of the Bregman distance for image processing given above is somewhat ambiguous.
Another  possibility is to simply take $J(u,u_k):=|u-u_k|_{\BV}$. Roughly speaking, $|u_{k+1}-u_k|_{\BV}$ measures the similarity between two images $u_k$ and $u_{k+1}$.  For any choice  $t_{k}>0$,    $u_{k+1}$   contains more detail  than $u_k$ and is  closer to $f$. We choose a sequence  $t_0>t_1>t_2>...$ with $\lim_{n \rightarrow \infty}t_n=0$. One sees that
\begin{equation}
\label{MsD:2}
u_{k+1}:=\argmin_{u\in \BV(\Omega)\cap L_2(\Omega)}~\{\frac{1}{2}\|f-u\|^2_{L_2}+t_{k}|u-u_k|_{\BV}\},
\end{equation}
with $u_0=0$. If we take $w_{k+1}:=u_{k+1}-u_k$, $w_0:=u_0$ and $v_k:=f-u_k$, then (\ref{MsD:2}) can be viewed as:
\[(w_{k+1},v_{k+1})=\argmin_{w+v=v_k}~\{\frac{1}{2}\|v\|^2_{L_2}+t_{k}|w|_{\BV}\},\]
with $v_0=f$. Thus $u_{k+1}=\sum_{n=1}^{k+1}w_n$ will be a minimizer for (\ref{MsD:2}). This is  the hierarchical $(L_2,\BV)$ decomposition method   introduced by Tadmor et.al. in \cite{Tadmor}.
\begin{theorem}
\label{Conv:1}
Let $f\in L_2(\Omega)$ and $\{u_k\}_{k=1}^{\infty}$ be defined as in (\ref{MsD:2}). Then we have:
\begin{enumerate}
\item $\|f-u_{k+1}\|_{L_2}\leq \|f-u_k\|_{L_2}.$
\item $\|f-u_{k+1}\|_{Y_2}\leq t_k \rightarrow 0.$
\item Let $v_n:=f-u_n$, then $\sum_{k=0}^{n}\{2t_k |u_{k+1}-u_k|_{\BV}+\|u_{k+1}-u_k\|^2_{L_2}\}=\|f\|^2_{L_2}-\|v_{n+1}\|^2_{L_2}$.
\end{enumerate}
\end{theorem}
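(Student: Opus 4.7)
The plan is to address the three conclusions in sequence, exploiting the fact that each update in (\ref{MsD:2}) is itself an $(L_2,\BV)$ Tikhonov decomposition: setting $w_{k+1}:=u_{k+1}-u_k$ and $v_k:=f-u_k$, the change of variable $u=u_k+w$ shows that $(w_{k+1},v_{k+1})$ is the minimizing pair for the data $v_k$ at scale $t_k$ in the sense of (\ref{equ:L2-BV-pair}).

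For (1), I would simply test the minimizer $u_{k+1}$ against the admissible choice $u=u_k$. Since $|u_k-u_k|_{\BV}=0$, the minimization inequality gives
\[\tfrac{1}{2}\|f-u_{k+1}\|^2_{L_2}+t_k|u_{k+1}-u_k|_{\BV}\le \tfrac{1}{2}\|f-u_k\|^2_{L_2},\]
which implies the stated monotonicity immediately.

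For (2), I would apply Theorem \ref{th:L2-BV} to the subproblem with $v_k$ in the role of $f$. In the first case, $\|v_k-\bar{v}_k\|_{Y_2}\le t_k$ forces $w_{k+1}=\bar{v}_k$, so $v_{k+1}=v_k-\bar{v}_k$ and $\|v_{k+1}\|_{Y_2}\le t_k$. In the complementary case, Theorem \ref{th:L2-BV}(ii) directly gives $\|v_{k+1}\|_{Y_2}=t_k$. Either way $\|f-u_{k+1}\|_{Y_2}\le t_k\to 0$. The $Y_2$ norm is meaningful at each step because an induction using Theorem \ref{min} shows $\int_\Omega u_k\,dx=\int_\Omega f\,dx$, so $v_k\in L^2_{\Box}(\Omega)$.

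For (3), I propose to prove the single-step energy identity
\[\|v_k\|^2_{L_2}-\|v_{k+1}\|^2_{L_2}=2t_k|u_{k+1}-u_k|_{\BV}+\|u_{k+1}-u_k\|^2_{L_2},\]
and telescope from $k=0$ to $k=n$ using $v_0=f$. Expanding $\|v_k\|^2_{L_2}=\|v_{k+1}+w_{k+1}\|^2_{L_2}$ reduces this to the cross-term equality $\langle v_{k+1},w_{k+1}\rangle_{L_2}=t_k|w_{k+1}|_{\BV}$. To obtain it I would invoke Theorem \ref{th:char} applied to the subproblem: there exists $z_k\in X(\Omega)_2$ with $\|z_k\|_{L_\infty}\le 1$, $[z_k,\nu]=0$, $v_{k+1}=-t_k\Div(z_k)$, and $\int_\Omega(z_k,Dw_{k+1})=|w_{k+1}|_{\BV}$. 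Using the vanishing boundary pairing from Theorem \ref{trace} to integrate by parts converts $\int_\Omega v_{k+1}w_{k+1}\,dx=-t_k\int_\Omega w_{k+1}\Div(z_k)\,dx$ into $t_k\int_\Omega(z_k,Dw_{k+1})=t_k|w_{k+1}|_{\BV}$, as required. The main subtlety here is that this integration by parts is only legitimate through the Anzellotti pairing developed in Theorems \ref{measure}--\ref{trace}, since neither $w_{k+1}$ nor $z_k$ is smooth; once that machinery is invoked, the remainder is routine algebra and the telescoping collapses the left side to $\|f\|^2_{L_2}-\|v_{n+1}\|^2_{L_2}$.
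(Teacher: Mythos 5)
Your proposal is correct and follows essentially the same route as the paper: test $u_k$ for monotonicity, use the characterization of the minimizer of the subproblem with data $v_k$ to get $\|v_{k+1}\|_{Y_2}\le t_k$, and establish the cross-term identity $\langle v_{k+1},u_{k+1}-u_k\rangle=t_k|u_{k+1}-u_k|_{\BV}$ to telescope the energy identity. The only (harmless) difference is in routing: the paper invokes Theorem \ref{th:L2-BV} with a case split on $\|v_k-c_k\|_{Y_2}$ for the cross term, whereas you derive it uniformly from Theorem \ref{th:char} and the Anzellotti--Green formula, which avoids the case distinction.
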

\begin{proof}~\\
1. Since
\[\frac{1}{2}\|f-u_k\|^2_{L_2}\geq \frac{1}{2}\|f-u_{k+1}\|^2_{L_2}+t_k|u_{k+1}-u_k|_{\BV} \geq \frac{1}{2}\|f-u_{k+1}\|^2_{L_2},\]
we have
\[\|f-u_{k+1}\|_{L_2}\leq \|f-u_k\|_{L_2}.\]\\
2. By Theorem \ref{th:char}.\\
3. If $\|v_k-c_k\|_{Y_2}< t_k$, where $c_k:=\frac{1}{|\Omega|}\int_{\Omega}v_k~dx$, then $(u_{k+1}-u_k,v_{k+1})=(c_k,v_k-c_k)$. Otherwise, we have:
\begin{equation}
\label{eqn:1}
\|v_{k+1}\|_{Y_2}= t_k,~~~\langle v_{k+1}, u_{k+1}-u_{k} \rangle:=\int_{\Omega}v_{k+1}(u_{k+1}-u_{k})~dx=t_k|u_{k+1}-u_{k}|_{\BV}.
\end{equation}
Since $u_{k+1}-u_k+v_{k+1}=v_k$, we get:
\begin{equation}
\label{eqn:2}
\|v_k\|^2_{L_2}=\langle (u_{k+1}-u_k)+v_{k+1}, (u_{k+1}-u_k)+v_{k+1} \rangle = \|v_{k+1}\|^2_{L_2}+\|u_{k+1}-u_k\|^2_{L_2}+2\langle v_{k+1}, u_{k+1}-u_{k} \rangle.
\end{equation}
Combining (\ref{eqn:1}) and (\ref{eqn:2}), we get:
\begin{equation}
\label{eqn:3}
\|v_k\|^2_{L_2}-\|v_{k+1}\|^2_{L_2}=\|u_{k+1}-u_k\|^2_{L_2}+2t_k|u_{k+1}-u_k|_{\BV}.
\end{equation}
Sum (\ref{eqn:3}) from $k=0$ to $k=n$, we get:
\begin{equation}
\label{eqn:4}
\sum_{k=0}^{n}\{2t_k |u_{k+1}-u_k|_{\BV}+\|u_{k+1}-u_k\|^2_{L_2}\}=\|f\|^2_{L_2}-\|v_{n+1}\|^2_{L_2}.
\end{equation}
\end{proof}
In addition, we have the following $L_2$ convergence result:
\begin{theorem}
Let $f\in L_2(\Omega)$ with $\Omega$ bounded Lipschitz domain in $\R^d$ and $t_k=t_{0}\cdot r^k$ with $0<r<1$, then we have:
\begin{enumerate}
\item $\lim_{k\rightarrow \infty}\|f-u_k\|_{L_2}=0.$
\item $\sum_{k=0}^{\infty}\{2t_k |u_{k+1}-u_k|_{\BV}+\|u_{k+1}-u_k\|^2_{L_2}\}=\|f\|^2_{L_2}$.
\end{enumerate}
\end{theorem}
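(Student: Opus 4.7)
The plan is to prove claim (1) first; claim (2) then follows immediately by taking $n\to\infty$ in the telescoping identity of Theorem \ref{Conv:1}(3) and using $\|v_{n+1}\|_{L_2}\to 0$. For (1), the three ingredients I will use are: the duality bound
\[|\langle v_{k+1},h\rangle|\le t_k\,|h|_{\BV}\quad\text{for every }h\in\BV(\Omega)\cap L_2(\Omega),\]
which follows from $\|v_{k+1}\|_{Y_2}\le t_k$ (Theorem \ref{Conv:1}(2)) together with the representation $v_{k+1}=-t_k\Div(z)$, $\|z\|_{L_\infty}\le 1$, $[z,\nu]=0$ supplied by Theorem \ref{th:char} combined with the Green-type formula in Theorem \ref{trace}; the absolute summability $\sum_{k=0}^\infty t_k\,|u_{k+1}-u_k|_{\BV}\le\tfrac12\|f\|^2_{L_2}$ from Theorem \ref{Conv:1}(3); and the density of $\BV(\Omega)\cap L_2(\Omega)$ in $L_2(\Omega)$, witnessed by $C^\infty(\bar\Omega)$.

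First I would establish the sublinear estimate: for every $h\in\BV(\Omega)\cap L_2(\Omega)$,
\[\|v_{k+1}\|_{L_2}\le\|v_{k+1}-h\|_{L_2}+\sqrt{t_k\,|h|_{\BV}}.\]
The derivation splits $\|v_{k+1}\|_{L_2}^2=\langle v_{k+1},v_{k+1}-h\rangle+\langle v_{k+1},h\rangle$, bounds the first summand by Cauchy--Schwarz and the second by the duality bound, and then solves the resulting quadratic inequality $x^2\le xy+c$ in $x=\|v_{k+1}\|_{L_2}$ (with $y=\|v_{k+1}-h\|_{L_2}$ and $c=t_k\,|h|_{\BV}$) via the elementary bound $\sqrt{y^2+4c}\le y+2\sqrt{c}$.

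Next I would show $t_k\,|u_{k+1}|_{\BV}\to 0$. Writing $a_j:=t_j\,|u_{j+1}-u_j|_{\BV}$, the summability above gives $\sum a_j<\infty$ and hence $a_j\to 0$; combining the triangle inequality $|u_{k+1}|_{\BV}\le\sum_{j=0}^k|u_{j+1}-u_j|_{\BV}$ (valid since $u_0=0$) with $t_k/t_j=r^{k-j}$ yields
\[t_k\,|u_{k+1}|_{\BV}\le\sum_{j=0}^k r^{k-j}a_j,\]
and this tends to $0$ by splitting the sum at an arbitrary cutoff $N$: the tail $\sum_{j>N}r^{k-j}a_j$ is dominated by $(\sup_{j>N}a_j)/(1-r)$, which becomes small by choice of $N$, and the head $\sum_{j\le N}r^{k-j}a_j\le r^{k-N}\sum_{j\le N}a_j$ vanishes as $k\to\infty$.

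To finish (1), fix $\epsilon>0$ and pick $\phi\in\BV(\Omega)\cap L_2(\Omega)$ with $\|f-\phi\|_{L_2}<\epsilon$, then apply the sublinear estimate with $h:=\phi-u_{k+1}\in\BV(\Omega)\cap L_2(\Omega)$. Since $\|v_{k+1}-h\|_{L_2}=\|f-\phi\|_{L_2}<\epsilon$ and $|h|_{\BV}\le|\phi|_{\BV}+|u_{k+1}|_{\BV}$, one obtains
\[\|v_{k+1}\|_{L_2}<\epsilon+\sqrt{t_k\,|\phi|_{\BV}}+\sqrt{t_k\,|u_{k+1}|_{\BV}},\]
and letting $k\to\infty$ followed by $\epsilon\to 0$ gives $\|v_k\|_{L_2}\to 0$. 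The main obstacle I expect is precisely the decay $t_k\,|u_{k+1}|_{\BV}\to 0$: the crude pointwise bound $|u_{j+1}-u_j|_{\BV}\le\|f\|_{L_2}^2/(2t_j)=O(r^{-j})$ only yields $t_k\,|u_{k+1}|_{\BV}=O(1)$, so the finer information $\sum a_j<\infty$ with $a_j\to 0$ must be exploited through the convolution-type argument above in order to extract the missing decay.
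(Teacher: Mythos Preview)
Your proof is correct, and it takes a genuinely different route from the paper.

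The paper argues as follows: since $\|v_n\|_{L_2}$ is nonincreasing it suffices to show $\|v_{2n+1}\|_{L_2}\to 0$. Writing $v_{2n+1}=v_n-\sum_{k=n}^{2n}(u_{k+1}-u_k)$ and expanding $\|v_{2n+1}\|_{L_2}^2=A+B$ with $A=-\langle v_{2n+1},\sum_{k=n}^{2n}(u_{k+1}-u_k)\rangle$ and $B=\langle v_{2n+1},v_n\rangle$, the duality bound and summability give $|A|\le\sum_{k=n}^{2n}t_k|u_{k+1}-u_k|_{\BV}\to 0$; for $B$ one expands $v_n=f-\sum_{k<n}(u_{k+1}-u_k)$, uses the ratio $t_{2n}/t_n=r^n\to 0$ to kill the sum, and finally needs $\langle v_{2n+1},f\rangle\to 0$, which follows from the separate Lemma~\ref{conv:weak} (bounded in $L_2$ plus $\|v_n\|_{Y_2}\to 0$ implies $v_n\rightharpoonup 0$).

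Your argument avoids the doubling trick and the weak-convergence lemma altogether: the sublinear estimate $\|v_{k+1}\|_{L_2}\le\|f-\phi\|_{L_2}+\sqrt{t_k|\phi|_{\BV}}+\sqrt{t_k|u_{k+1}|_{\BV}}$ reduces everything to the single quantitative fact $t_k|u_{k+1}|_{\BV}\to 0$, which you extract from the summable series $a_j=t_j|u_{j+1}-u_j|_{\BV}$ via the discrete convolution bound $t_k|u_{k+1}|_{\BV}\le\sum_{j\le k}r^{k-j}a_j$. This is more elementary and slightly more transparent about where the geometric decay $t_k=t_0r^k$ is actually used. The paper's approach, on the other hand, isolates the weak convergence $v_n\rightharpoonup 0$ as an independent step (Lemma~\ref{conv:weak}), which may be of interest in its own right. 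Both routes are valid and comparable in length.
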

\begin{proof}~\\
1. By Theorem \ref{Conv:1}, we know that $\{\|v_n\|_{L_2}\}$ is a decreasing sequence. Hence, to prove $\|v_n\|_{L_2}\rightarrow 0$, we only need to show $\|v_{2n+1}\|_{L_2}\rightarrow 0$. Note $v_{2n+1}=v_n-\sum_{k=n}^{2n}(u_{k+1}-u_k)$. Multiply $v_{2n+1}$ with itself, we get:
\[\|v_{2n+1}\|^2_{L_2}=-\langle v_{2n+1},\sum_{k=n}^{2n}(u_{k+1}-u_k) \rangle+\langle v_{2n+1},v_n \rangle=: A+B.\]
By Theorem \ref{Conv:1}, we know $\|v_{2n+1}\|_{Y_2}\leq t_{2n}$. So
\[|A|\leq t_{2n}|\sum_{k=n}^{2n}(u_{k+1}-u_k)|_{\BV}\leq t_{2n}\sum_{k=n}^{2n}|u_{k+1}-u_k|_{\BV}\leq \sum_{k=n}^{2n}t_k|u_{k+1}-u_k|_{\BV}.\]
From Theorem \ref{Conv:1}, we know that $\sum_{k=0}^{n}t_k |u_{k+1}-u_k|_{\BV}\leq \frac{1}{2}\|f\|^2_{L_2}$. Hence $\{\sum_{k=0}^{n}t_k |u_{k+1}-u_k|_{\BV}\}$ is a Cauchy sequence. So $|A|\rightarrow 0$ when $n\rightarrow \infty$.\\
Since $v_n=f-u_n=f-\sum_{k=0}^{n-1}(u_{k+1}-u_k)$, we have:
\begin{eqnarray*}|B|&=&|\langle v_{2n+1},f \rangle-\sum_{k=0}^{n-1}\langle v_{2n+1},u_{k+1}-u_k\rangle|\\
&\leq& |\langle v_{2n+1},f \rangle|+t_{2n}\sum_{k=0}^{n-1}|u_{k+1}-u_k|_{\BV}\\
&\leq& |\langle v_{2n+1},f \rangle|+\frac{t_{2n}}{t_n}\sum_{k=0}^{n-1}t_k|u_{k+1}-u_k|_{\BV}\\
&\leq& |\langle v_{2n+1},f \rangle|+\frac{t_{2n}}{2t_n}\|f\|^2_{L_2}.
\end{eqnarray*}
Since $\lim_{n\rightarrow 0}\frac{t_{2n}}{2t_n}=0$, we have $|B|\rightarrow 0$ iff $|\langle v_{2n},f \rangle|\rightarrow 0$.\\
Since $u_{k+1}$ is a minimizer for (\ref{MsD:2}), there exists $z_{k+1}\in X(\Omega)_2$ with $\|z_{k+1}\|_{L_\infty}\leq 1$, $v_{k+1}=f-u_{k+1}=-t_{k}\Div(z_{k+1})=\Div(-t_{k}z_{k+1})\in L^2_{\Box}(\Omega)$ such that
\[\left \{\begin{array}{ll}&\int_\Omega (z_{k+1},D(u_{k+1}-u_k))=|u_{k+1}-u_k|_{\BV} \\&[z_{k+1},\nu]=0\end{array}\right.\]
Since we have $\sup_{k\in \mathbb{N}}\|v_k\|_{L_2}\leq \|f\|_{L_2}$ and $\|v_{k+1}\|_{Y_2}\leq t_k\rightarrow 0$, by Theorem \ref{conv:weak}, we know $v_n\rightharpoonup 0$ weakly in $L_2(\Omega)$. So we have $|\langle v_{2n+1},f \rangle|\rightarrow 0$. Consequently $\|v_{2n+1}\|_{L_2}\rightarrow 0$. So $\lim_{k\rightarrow \infty}\|f-u_k\|_{L_2}=0$.

2. Recall from Theorem \ref{MsD:2}, we have:
\[\sum_{k=0}^{n}\{2t_k |u_{k+1}-u_k|_{\BV}+\|u_{k+1}-u_k\|^2_{L_2}\}=\|f\|^2_{L_2}-\|v_{n+1}\|^2_{L_2}.\]
Let $n\rightarrow \infty$ and notice $\|v_n\|_{L_2}\rightarrow 0$, we have:
\[\sum_{k=0}^{\infty}\{2t_k |u_{k+1}-u_k|_{\BV}+\|u_{k+1}-u_k\|^2_{L_2}\}=\|f\|^2_{L_2}\]
\end{proof}
\begin{remark}In \cite{Tadmor} (Tadmor et al.), the same result was proved under the assumption $f\in \BV(\Omega)$($\Omega\subset \R^2$) or $f \in (L_2,\BV)_{\theta}$ with $0<\theta<1$. Here we have removed the smoothness assumption.
\end{remark}

\section{Decomposition for $(X,Y)=(L_p,W^{1}(L_\tau))$}
Now let's consider the following $(L_p(\Omega),W^{1}(L_\tau(\Omega)))$ decomposition with $1/\tau:=1/p+1/d$:
\begin{equation}
\label{Lp-Sobolev}
(u_t,v_t):=\argmin_{u+v=f}~\{\frac{1}{p}\|v\|^p_{L_p}+t \|\nabla u\|_{L_\tau}\},
\end{equation}
where $\|\nabla u\|_{L_\tau}:=(\int_{\Omega}|\nabla u|^{\tau}~dx)^{1/\tau}$ and $1<p<\infty$. In this section, $\Omega:=\R^d$ or $\Omega$ is a bounded Lipschitz domain in $\R^d$. Thus by Sobolev embedding theorem, we have $W^{1}(L_\tau(\Omega))\subset L_p(\Omega)$.
Let
\[J_{\tau}(u):=\left \{ \begin{array}{ll}\|\nabla u\|_{L_\tau}&u\in W^{1}(L_{\tau}(\Omega))\\
+\infty&u\in L_p(\Omega) \setminus  W^{1}(L_{\tau}(\Omega))\end{array} \right.\]
be a functional defined on $L_p(\Omega)$. Since $(L_p)^*=L_q$ for $\frac{1}{p}+\frac{1}{q}=1$, by Riesz representation theorem, any functional $s$ defined on $L_p(\Omega)$ can be represented as $\langle s,v \rangle:=\int_{\Omega}sv~dx$ for any $v\in L_p(\Omega)$.
To study the pair $(L_p(\Omega),W^{1}(L_\tau(\Omega)))$, we first need to characterize the subdifferential of the functional $J_{\tau}$. For $s\in L^{q}_{\Box}(\Omega)$ $(\frac{1}{p}+\frac{1}{q}=1)$, we can define the norm $\|\cdot\|_{G_\tau}$ by
\[\|s\|_{G_\tau}:=\sup_{\|\nabla v\|_{L_\tau}\leq 1}\int_{\Omega}sv~dx\]

\begin{theorem}~
\begin{description}
\item[(i)~]For $u\neq {\rm constant}$, $\partial J_{\tau}(u):=\{s \in L^{q}_{\Box}(\Omega)~:~\int_{\Omega}su~dx=\|\nabla u\|_{L_\tau}~\mbox{and}~\|s\|_{G_\tau} = 1\}$.
\item[(ii)]For $u= {\rm constant}$, $\partial J_{\tau}(u):=\{s \in L^{q}_{\Box}(\Omega)~:~\|s\|_{G_\tau}\leq 1\}$.
\end{description}
\end{theorem}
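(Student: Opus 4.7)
My plan is to translate the defining inequality for the subdifferential
\[ J_\tau(w) \geq J_\tau(u) + \int_\Omega s(w-u)\,dx \qquad \forall\, w \in L_p(\Omega) \]
into the two claimed conditions, exploiting two structural features of $J_\tau$: it is positively homogeneous of degree one, and it is invariant under adding a constant. Both directions should then fall out of carefully chosen test functions $w$.

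The forward direction unfolds in three test steps. First, adding a constant $c$ to $u$ leaves $J_\tau$ unchanged, so the subgradient inequality applied to $w = u \pm c$ (valid on a bounded Lipschitz $\Omega$, since then $c \in L_p$) forces $\int_\Omega s\,dx = 0$, i.e.\ $s \in L^q_\Box(\Omega)$. Next, for every $v \in W^1(L_\tau(\Omega)) \cap L_p(\Omega)$ and $t>0$ I test with $w = u + tv$: combining the subgradient inequality with the sublinearity $J_\tau(u+tv) \leq J_\tau(u) + t\,J_\tau(v)$ gives $t\int_\Omega s v\,dx \leq t\|\nabla v\|_{L_\tau}$, hence $\|s\|_{G_\tau}\leq 1$. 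Functions $w \notin W^1(L_\tau)$ contribute nothing since $J_\tau(w) = +\infty$ there. Third, in case (i) ($J_\tau(u)>0$) the choices $w = 2u$ and $w = 0$ respectively yield $\int_\Omega s u\,dx \leq J_\tau(u)$ and $\int_\Omega s u\,dx \geq J_\tau(u)$, giving the equality $\int_\Omega s u\,dx = \|\nabla u\|_{L_\tau}$. Combined with $\|s\|_{G_\tau}\leq 1$ and the nonzero denominator, we also get $\|s\|_{G_\tau} \geq \int_\Omega s u\,dx/\|\nabla u\|_{L_\tau} = 1$, so $\|s\|_{G_\tau} = 1$. In case (ii) $u$ is constant, hence $J_\tau(u)=0$ and $\int_\Omega s u\,dx = 0$ (using $s\in L^q_\Box$), so the normalization condition is vacuous and only $\|s\|_{G_\tau}\leq 1$ remains.

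For the converse I verify the subgradient inequality directly. In case (i), given $s \in L^q_\Box$ with $\|s\|_{G_\tau} = 1$ and $\int_\Omega s u\,dx = \|\nabla u\|_{L_\tau}$, for any $w \in W^1(L_\tau) \cap L_p$
\[ \int_\Omega s(w-u)\,dx = \int_\Omega sw\,dx - \|\nabla u\|_{L_\tau} \leq \|\nabla w\|_{L_\tau} - \|\nabla u\|_{L_\tau} = J_\tau(w) - J_\tau(u), \]
while for $w \notin W^1(L_\tau)$ the inequality is trivial because the right side is $+\infty$. In case (ii), $J_\tau(u)=0$ and $\int_\Omega s u\,dx = 0$, so the subgradient inequality collapses to $\int_\Omega sw\,dx \leq \|\nabla w\|_{L_\tau}$, which is precisely $\|s\|_{G_\tau}\leq 1$.

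The principal obstacle is the housekeeping needed to make $\|\cdot\|_{G_\tau}$ well-defined and finite so that the stated conditions are nonvacuous. Finiteness uses the Sobolev embedding $W^1(L_\tau(\Omega)) \hookrightarrow L_p(\Omega)$ (since $1/\tau = 1/p + 1/d$) together with $s\in L^q_\Box$: for $v\in W^1(L_\tau)$ with mean $\bar v$,
\[ \Bigl|\int_\Omega s v\,dx\Bigr| = \Bigl|\int_\Omega s(v-\bar v)\,dx\Bigr| \leq \|s\|_{L_q}\,\|v-\bar v\|_{L_p} \leq C\|s\|_{L_q}\,\|\nabla v\|_{L_\tau}. \]
The other subtlety is the case $\Omega = \R^d$: only $c=0$ is an admissible constant in $L_p$, so case (ii) reduces to $u\equiv 0$, and the translation-by-$c$ test in Step~1 must be replaced by testing against compactly supported mean-zero perturbations to recover $s\in L^q_\Box$; with that replacement the rest of the argument goes through unchanged.
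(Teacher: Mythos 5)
Your proposal is correct and follows essentially the same route as the paper: both directions are obtained by manipulating the subgradient inequality with the test functions $w=\lambda u$ (scaling, giving $\int_\Omega su\,dx=\|\nabla u\|_{L_\tau}$ and hence $\|s\|_{G_\tau}=1$ when $u$ is nonconstant) and $w=u+c$ (translation invariance, giving $s\in L^q_\Box(\Omega)$), with the converse checked by the same one-line estimate $\int_\Omega s(w-u)\,dx\leq\|s\|_{G_\tau}\|\nabla w\|_{L_\tau}-\|\nabla u\|_{L_\tau}$. Your added remarks on the finiteness of $\|\cdot\|_{G_\tau}$ via the Sobolev embedding and on the case $\Omega=\R^d$ are refinements the paper glosses over, but they do not change the argument.
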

\begin{proof}
Given a function $u \in W^{1}(L_\tau(\Omega))$, we can define a functional $\hat{s}$ on $\mathbb{R} u$ such that $\langle \hat{s},c u \rangle =c\|\nabla u\|_{L_\tau}$ for any $c\in \R$. By Hahn-Banach Theorem, we can extend the domain of the  functional to $W^{1}(L_\tau(\Omega))$. Let's say functional $s$ with $s|_{\mathbb{R} u}=\hat{s}$ and $|\langle s,v \rangle| \leq \|\nabla v\|_{L_\tau}$ for any $v \in W^{1}(L_\tau(\Omega))$. Since $s|_{\mathbb{R} u}=\hat{s}$, we have $\langle s,u \rangle=\|\nabla u\|_{L_\tau}$. Consequently, we have $\|s\|_{G_\tau}\leq 1$ for $s \in L^{q}_{\Box}(\Omega)$(equality holds when $\nabla u \neq 0$).

Then if $s \in L^{q}_{\Box}(\Omega)$, we have: 
\[\langle s,v-u \rangle=\int_{\Omega}s(v-u)~dx\leq \|s\|_{G_\tau}\|\nabla v\|_{L_\tau}-\|\nabla u\|_{L_\tau}\leq J_{\tau}(v)-J_{\tau}(u)\]
for any $v\in W^{1}(L_\tau(\Omega))$.
So $s \in \partial J_{\tau}(u)$.

Conversely, if $s\in \partial J_{\tau}(u)$, then $J_{\tau}(v)-J_{\tau}(u) \geq \langle s,v-u \rangle$ for any $v\in W^{1}(L_\tau(\Omega))$. In addition, we have $s\in L_q(\Omega)$. By taking $v=\lambda u$, we get
\[(1-\lambda)(\langle s,u \rangle -\|\nabla u\|_{L_\tau})\geq 0.\]
By successively taking $\lambda >1$ and $\lambda < 1$, we deduce that $\langle s,u \rangle=\|\nabla u\|_{L_\tau}$. Therefore, $\langle s, v\rangle\leq \|\nabla v\|_{L_\tau}$ for all $v\in  W^{1}(L_\tau(\Omega))$ and $\langle s,u\rangle = \|\nabla u\|_{L_\tau}$. This implies that $\|s\|_{G_\tau}\leq 1$(equality holds when $\nabla u \neq 0$). Since $\langle s,c \rangle\leq J_\tau(u+c)-J_\tau(u) =0$ for any constant $c$, we have $\int_{\Omega}s~dx=0$, which means $s\in L^q_{\Box}(\Omega)$.
\end{proof}

\noindent Define the duality mapping $\mathfrak{J}_p:L_p(\Omega)\mapsto L_q(\Omega)$ $(\frac{1}{p}+\frac{1}{q}=1)$ by:
\[\mathfrak{J}_p(u):=|u|^{p-2}u.\]
It is easy to check $\mathfrak{J}_p(u)=\partial(\frac{1}{p}\|\cdot\|^{p}_{L_p})(u)$. Then we have the following theorem for the minimizing pair $(L_p(\Omega),W^{1}(L_\tau(\Omega)))$.
\begin{theorem}
\label{th:Lp-Sobolev}
Given $f\in L_p(\Omega)$ and let $(u_{t},v_{t})$ be the minimizing pair of problem (\ref{Lp-Sobolev}) and $c_f:=\argmin_{c\in \R}\|f-c\|_{L_p}$. We have:
\begin{enumerate}
\item $\|\mathfrak{J}_p(f-c_f)\|_{G_\tau} \leq t \Leftrightarrow u_{t}=c_f$.
\item $\|\mathfrak{J}_p(f-c_f)\|_{G_\tau} \geq t \Leftrightarrow \|\mathfrak{J}_p(v_{t})\|_{G_\tau} = t$ and $\int_{\Omega} \mathfrak{J}_p(v_{t})u_{t}~dx=t\|\nabla u_{t}\|_{L_\tau}$.
\end{enumerate}
\end{theorem}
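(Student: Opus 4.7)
The plan is to mirror the strategy used for Theorem \ref{th:L2-BV}, with $\partial J_\tau$ playing the role of $\mathcal{A}$ and the $L_p$-$L_q$ duality replacing the $L_2$ inner product. First I would record the first-order optimality condition. Since $u \mapsto \frac{1}{p}\|f-u\|^p_{L_p}$ is convex and everywhere continuous on $L_p(\Omega)$, the Moreau--Rockafellar sum rule applies and yields
\[\partial T(u)=-\mathfrak{J}_p(f-u)+t\,\partial J_{\tau}(u).\]
By the minimum principle, $u_t$ is a minimizer of $T$ if and only if $\mathfrak{J}_p(v_t)\in t\,\partial J_{\tau}(u_t)$, where $v_t:=f-u_t$. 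Note that this already forces $\mathfrak{J}_p(v_t)\in L^q_{\Box}(\Omega)$, since by the preceding theorem every subgradient of $J_\tau$ has mean zero.

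For part (i), assume $u_t=c_f$. Then $c_f$ is characterized by $\int_\Omega |f-c_f|^{p-2}(f-c_f)\,dx=0$, which is the Euler equation for the scalar minimization defining $c_f$; this places $\mathfrak{J}_p(v_t)=\mathfrak{J}_p(f-c_f)$ in $L^q_{\Box}(\Omega)$. Since $c_f$ is constant, the characterization $\partial J_{\tau}(c_f)=\{s\in L^q_{\Box}(\Omega):\|s\|_{G_{\tau}}\leq 1\}$ converts the optimality condition into $\|\mathfrak{J}_p(f-c_f)\|_{G_{\tau}}\leq t$. Conversely, if this inequality holds, then $\mathfrak{J}_p(f-c_f)/t\in\partial J_{\tau}(c_f)$, so $u=c_f$ verifies the optimality criterion; by strict convexity of $T$ the minimizer is unique, hence $u_t=c_f$.

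For part (ii), assume $u_t$ is non-constant. The second branch of the subdifferential characterization gives
\[\partial J_{\tau}(u_t)=\Bigl\{s\in L^q_{\Box}(\Omega):\int_{\Omega}s u_t\,dx=\|\nabla u_t\|_{L_\tau},\ \|s\|_{G_{\tau}}=1\Bigr\}.\]
Substituting $s=\mathfrak{J}_p(v_t)/t$ in the optimality condition translates it directly into $\|\mathfrak{J}_p(v_t)\|_{G_{\tau}}=t$ together with $\int_{\Omega}\mathfrak{J}_p(v_t)u_t\,dx=t\|\nabla u_t\|_{L_\tau}$. Conversely, if these two identities hold for the pair $(u_t,v_t)$ coming from the minimization, then $\mathfrak{J}_p(v_t)/t$ lies in $\partial J_\tau(u_t)$, confirming optimality. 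The equivalence of the one-sided condition $\|\mathfrak{J}_p(f-c_f)\|_{G_\tau}\geq t$ with the non-constant regime follows because if $u_t$ were constant the previous case would give $\|\mathfrak{J}_p(f-c_f)\|_{G_\tau}\leq t$, so both descriptions coincide on the boundary value $\|\mathfrak{J}_p(f-c_f)\|_{G_\tau}=t$.

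The main obstacle I anticipate is justifying cleanly that the subdifferential sum rule and the characterization from the previous theorem combine correctly, in particular ensuring that the mean-zero constraint on $\mathfrak{J}_p(v_t)$ is an automatic consequence of optimality rather than an extra assumption; this is exactly the analogue of the Neumann condition appearing in Theorem \ref{th:L2-BV}, and it hinges on the translation invariance $J_\tau(u+c)=J_\tau(u)$. A secondary point to handle is the boundary case $\|\mathfrak{J}_p(f-c_f)\|_{G_\tau}=t$, where one must check that the two descriptions agree; this is consistent because at equality $u_t=c_f$ gives $\|\nabla u_t\|_{L_\tau}=0$ and $\|\mathfrak{J}_p(v_t)\|_{G_\tau}=t$, which lies in both subdifferential descriptions.
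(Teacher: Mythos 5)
Your proposal is correct and follows essentially the same route as the paper: reduce the problem via the minimum principle to the inclusion $\mathfrak{J}_p(v_t)\in t\,\partial J_{\tau}(u_t)$ and then read off both parts from the two-branch characterization of $\partial J_{\tau}$ established in the preceding theorem, with the mean-zero property of $\mathfrak{J}_p(f-c_f)$ obtained from the optimality of $c_f$ in the scalar problem (the paper derives it from the subgradient inequality for $\frac{1}{p}\|\cdot\|_{L_p}^p$, which is the same fact). Your treatment of the boundary case $\|\mathfrak{J}_p(f-c_f)\|_{G_\tau}=t$ and of the converse implications is in fact slightly more explicit than the paper's.
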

\begin{proof}
Since $\langle \mathfrak{J}_p(f-c_f),c \rangle\leq \frac{1}{p}(\|f-c_f+c)\|^p_{L_p}-\|f-c_f\|^p_{L_p})$ for any $c\in \R$ and the right handside of the inequality is always nonnegative, we get $\langle \mathfrak{J}_p(f-c_f),c \rangle=0$ for any $c\in \R$, which means $\int_{\Omega}\mathfrak{J}_p(f-c_f)~dx=0$. So $\mathfrak{J}_{p}(f-c_f)\in L^{q}_{\Box}(\Omega)$.

$u_{t}$ is a minimizer for problem (\ref{Lp-Sobolev}) is equivalent to say $\mathfrak{J}_{p}(f-u_{t})\in t \partial J_{\tau}(u_{t})$. So $\mathfrak{J}_p(v_t)\in L^q_{\Box}(\Omega)$. In addition, we have $\|\mathfrak{J}_{p}(v_{t})\|_{G_\tau}\leq t$ and $\int_{\Omega}\mathfrak{J}_p(v_{t})u_{t}~dx=t\|\nabla u_{t}\|_{L_\tau}$.
\begin{enumerate}
\item When $\|\mathfrak{J}_p(f-c_f)\|_{G_\tau}\leq t$, we have $\mathfrak{J}_{p}(f-c_f)\in t \partial J_{\tau}(c_f)$. So $u_t=c_f$.
\item When $\|\mathfrak{J}_p(f-c_f)\|_{G_\tau}>t$, $u_t$ cannot be a constant. So $\|\mathfrak{J}_p(v_{t})\|_{G_\tau} = t$.
\end{enumerate}
\end{proof}

\section{Decomposition for $(X,Y)=(\ell_2,\ell_p)$}
A special case that is important in analysis and in numerical methods is when $X$ and $Y$ are a pair of $\ell_p$ spaces.   Such problems occur when we discretize the decomposition problems for Sobolev or Besov spaces and also when we develop numerical methods. In this chapter we shall study the minimizing pair for the case of $X=\ell_2:=\ell_2(\Z)$ and $Y=\ell_p:=\ell_p(\Z)$, $1\le p<\infty$, i.e. the problem

\begin{equation}
\label{l2-lp}
(x_t,y_t):=\argmin_{x+y=b}~\{\frac{1}{2}\|y\|^{2}_{\ell_2}+t\|x\|_{\ell_p}\},
\end{equation}
where $b\in \ell_2$.

\begin{theorem}~
\begin{enumerate}
\item For $x \neq 0$, \(\partial(\|x\|_{\ell_p}):=\{s \in \ell_{q}~:~s\cdot x=\|x\|_{\ell_p}~\mbox{and}~\|s\|_{\ell_{q}} = 1\}\).
\item For $x = 0$, \(\partial(\|x\|_{\ell_p}):=\{s \in  \ell_{q}~:~\|s\|_{\ell_q} \leq 1\}\).
\end{enumerate}
\end{theorem}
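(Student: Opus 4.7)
The plan is to follow the same template as the proof of the subdifferential characterization for $J_\tau$ in the previous section, only now everything is sequence-based and the Hahn--Banach step can be replaced by a direct Hölder argument since the dual pairing between $\ell_p$ and $\ell_q$ is concrete. I split into the two cases stated in the theorem.

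For $x = 0$, by definition $s \in \partial(\|\cdot\|_{\ell_p})(0)$ iff $\|y\|_{\ell_p} \ge s\cdot y$ for every $y \in \ell_p$. Since this is a positively homogeneous condition in $y$, it is equivalent to $\sup_{\|y\|_{\ell_p}\le 1} s\cdot y \le 1$, which by the standard duality $(\ell_p)^* = \ell_q$ is exactly $\|s\|_{\ell_q}\le 1$. This takes care of part 2.

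For $x\ne 0$, I prove the two inclusions separately. For ``$\supseteq$'': if $s\cdot x = \|x\|_{\ell_p}$ and $\|s\|_{\ell_q}=1$, then for every $y\in\ell_p$ Hölder's inequality gives $s\cdot y \le \|y\|_{\ell_p}$, so $\|y\|_{\ell_p} - \|x\|_{\ell_p} \ge s\cdot y - s\cdot x = \langle s, y-x\rangle$, which is precisely the subgradient inequality. For ``$\subseteq$'': assume $s\in\partial(\|x\|_{\ell_p})$. Plugging in $y = \lambda x$ yields $(|\lambda|-1)\|x\|_{\ell_p} \ge (\lambda-1)\, s\cdot x$; restricting to $\lambda > 0$ and letting $\lambda$ approach $1$ from above and from below forces $s\cdot x = \|x\|_{\ell_p}$ (exactly as in the $J_\tau$ proof). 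Substituting this back into the subgradient inequality collapses it to $s\cdot y \le \|y\|_{\ell_p}$ for every $y$, hence $\|s\|_{\ell_q}\le 1$. The reverse bound $\|s\|_{\ell_q}\ge 1$ is immediate from $s\cdot x = \|x\|_{\ell_p}$ and Hölder applied to the nonzero vector $x$, so $\|s\|_{\ell_q}=1$.

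There is no serious obstacle here: the argument is essentially a discretization of the Sobolev-case proof, with the single subtlety being the sign handling in the $\lambda$-test, where one must restrict to $\lambda>0$ (for $\lambda<0$ the identity $|\lambda|=-\lambda$ makes the inequality degenerate and uninformative). The only other point worth noting is that, as in the continuous case, the ``extra'' condition $s\in\ell_q$ appearing in the statement is automatic: any element of $\partial(\|x\|_{\ell_p})$ is by definition in $(\ell_p)^*=\ell_q$, so no separate membership check is required.
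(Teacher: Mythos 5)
Your proof is correct and follows essentially the same route as the paper: the same $y=\lambda x$ test to force $s\cdot x=\|x\|_{\ell_p}$, and the same H\"older/duality argument to pin down $\|s\|_{\ell_q}$. The only difference is cosmetic — you replace the paper's Hahn--Banach extension step in the ``$\supseteq$'' direction with a direct H\"older estimate, which is a legitimate simplification since the set is given explicitly and $(\ell_p)^*=\ell_q$ is concrete here.
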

\begin{proof}
Given a sequence $x \in \ell_p$, we can define a functional $\hat{s}$ on $\mathbb{R} x$ such that $\langle \hat{s},c x \rangle =c\|x\|_{\ell_p}$ for any $c\in \R$. By Hahn-Banach Theorem, we can extend the domain of the  functional to $\ell_p$. Let's say functional $s$ with $s|_{\mathbb{R} x}=\hat{s}$ and $|\langle s,y \rangle| \leq \|y\|_{\ell_p}$ for any $y \in \ell_p$. Since $s|_{\mathbb{R} x}=\hat{s}$, we have $\langle s,x \rangle=\|x\|_{\ell_p}$. So $\|s\|_{\ell_q}\leq 1$ and we have $\|s\|_{\ell_q}=1$ for the case $x \neq 0$. Then we have 
\[s\cdot (y-x)\leq \|s\|_{\ell_q}\|y\|_{\ell_p}-\|x\|_{\ell_p}\leq \|y\|_{\ell_p}-\|x\|_{\ell_p}.\]
So $s \in \partial(\|x\|_{\ell_p})$.

Conversely, if $s \in \partial(\|x\|_{\ell_p})$, then $\|y\|_{\ell_p}-\|x\|_{\ell_p} \geq s\cdot (y-x)$. By taking $y=\lambda x$, we get
\[(1-\lambda)(s\cdot x -\|x\|_{\ell_p})\geq 0.\]
By successively taking $\lambda >1$ and $\lambda < 1$, we deduce that $s\cdot x=\|x\|_{\ell_p}$. Therefore, $s\cdot y\leq \|y\|_{\ell_p}$ for all $y \in \ell_p$ and $s\cdot x = \|x\|_{\ell_p}$. This implies that $\|s\|_{\ell_q}\leq 1$ (equality holds when $x \neq 0$).
\end{proof}

\noindent For the case $p=1$, the minimizing pair $(x_t,y_t)$ can be obtained by the \lq\lq soft thresholding\rq \rq~procedure which is widely used for wavelet shrinkage in image processing (See \cite{DeVore}). That is to say: $x^{i}_t=\mbox{sign}(b^i)\max\{0,|b^i|-t\}$, where $x_t=\{x^{i}_t\}$ and $b=\{b^{i}\}$. It can be shown that the \lq\lq soft thresholding\rq \rq~technique is a special case of the following characterization.

\begin{theorem}
\label{th:l2-lp}
Given $b\in \ell_2$, let $(x_{t},y_{t})$ be the minimizing pair of problem (\ref{l2-lp}). We have:
\begin{enumerate}
\item $\|b\|_{\ell_q} \leq t \Leftrightarrow x_{t}=0$.
\item $\|b\|_{\ell_q} \geq t \Leftrightarrow \|y_{t}\|_{\ell_q} = t$ and $y_{t}\cdot x_{t}=t\|x_t\|_{\ell_p}$.
\end{enumerate}
\end{theorem}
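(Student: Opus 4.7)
The plan is to mirror the approach used for Theorems \ref{th:L2-BV} and \ref{th:Lp-Sobolev}: translate the minimization problem into an Euler--Lagrange inclusion via the subdifferential, then read off the two cases directly from the subdifferential characterization of $\|\cdot\|_{\ell_p}$ that was just established. Concretely, set $T(x):=\frac{1}{2}\|b-x\|_{\ell_2}^2 + t\|x\|_{\ell_p}$. By the minimum principle, $x_t$ minimizes $T$ if and only if $0\in \partial T(x_t)$. Since $\partial(\frac{1}{2}\|b-\cdot\|_{\ell_2}^2)(x_t)=\{x_t-b\}$ and the $\ell_p$-norm is continuous and convex, we get the equivalent inclusion $y_t=b-x_t \in t\,\partial(\|x_t\|_{\ell_p})$.

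First I would handle statement (1). If $x_t=0$, then $y_t=b$ and by the preceding theorem $b\in t\{s\in\ell_q:\|s\|_{\ell_q}\le 1\}$, hence $\|b\|_{\ell_q}\le t$. Conversely, if $\|b\|_{\ell_q}\le t$ then $b\in t\,\partial(\|\cdot\|_{\ell_p})(0)$, so $x=0$ satisfies the Euler--Lagrange inclusion; by strict convexity of $T$ the minimizer is unique, so $x_t=0$. For statement (2), I would argue in both directions using the $x_t\neq 0$ branch of the subdifferential characterization. If $\|b\|_{\ell_q}\ge t$, the case $\|b\|_{\ell_q}>t$ forces $x_t\neq 0$ by (1), and then the inclusion $y_t\in t\,\partial(\|x_t\|_{\ell_p})$ gives $y_t=ts$ with $s\cdot x_t=\|x_t\|_{\ell_p}$ and $\|s\|_{\ell_q}=1$, yielding $\|y_t\|_{\ell_q}=t$ and $y_t\cdot x_t=t\|x_t\|_{\ell_p}$; the boundary case $\|b\|_{\ell_q}=t$ is consistent because then $x_t=0$, $y_t=b$ also satisfies $\|y_t\|_{\ell_q}=t$ and $y_t\cdot x_t=0=t\|x_t\|_{\ell_p}$. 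Conversely, if $\|y_t\|_{\ell_q}=t$ and $y_t\cdot x_t=t\|x_t\|_{\ell_p}$, then $\|b\|_{\ell_q}=\|y_t+x_t\|_{\ell_q}\ge \|y_t\|_{\ell_q}\cdot \|x_t\|_{\ell_p}/\|x_t\|_{\ell_p}=t$ when $x_t\neq 0$ (via Hölder applied to $y_t\cdot x_t$), and $\|b\|_{\ell_q}=t$ when $x_t=0$.

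Almost every step is routine once the subdifferential formula is in hand; the only point requiring any care is existence and uniqueness of $(x_t,y_t)$, needed to conclude the converses. I would dispose of this with a brief remark that $T$ is coercive, strictly convex, and lower semi-continuous on $\ell_2$ (using that $\|\cdot\|_{\ell_p}$ is weakly lower semi-continuous on $\ell_2$ when $p\ge 1$, or by restricting to a minimizing sequence which is automatically bounded in $\ell_p$ via $\|x\|_{\ell_p}\le T(x)/t$), so a unique minimizer exists by a standard weak compactness argument analogous to Theorem \ref{min}. The main obstacle, such as it is, is simply making sure the boundary equality $\|b\|_{\ell_q}=t$ is handled consistently in both equivalences, which I would address explicitly as above rather than glossing over it.
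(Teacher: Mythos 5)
Your proposal is correct and follows essentially the same route as the paper: reduce to the Euler--Lagrange inclusion $b-x_t\in t\,\partial(\|x_t\|_{\ell_p})$ and read off both cases from the subdifferential characterization of $\|\cdot\|_{\ell_p}$. The paper's own proof is even terser (it does not spell out the converses or the boundary case $\|b\|_{\ell_q}=t$), so your extra care there is welcome; just rewrite the H\"older step in the converse of (2) as $\|b\|_{\ell_q}\ge b\cdot x_t/\|x_t\|_{\ell_p}=(\|x_t\|_{\ell_2}^2+t\|x_t\|_{\ell_p})/\|x_t\|_{\ell_p}\ge t$, since the displayed chain as written is circular.
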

\begin{proof}
$(x_t,y_t)$ is the minimizing pair of problem (\ref{l2-lp}) is equivalent to say: $b-x_t\in t\partial(\|x_t\|_{\ell_p})$. Consequently, we have $y_t\cdot x_t=(b-x_t)\cdot x_t=t\|x_t\|_{l_p}$.
\begin{enumerate}
\item When $\|b\|_{\ell_q}\leq t$, we have $b\in t\partial(\|\cdot\|_{\ell_p})(0)$. So $x_t=0$.
\item When $\|b\|_{\ell_q}>t$, $x_t$ cannot be the zero element. So $\|y_t\|_{\ell_q}=\|b-x_t\|_{\ell_q}=t$.
\end{enumerate}
\end{proof}

\noindent Let's define the convex set 
\[ U_{q}:=\{y\in \ell_{2}:\|y\|_{\ell_q}\leq 1\}\]
 where $q$ is the dual index to $p$ ($1/q+1/p=1$).
Then the set $U_q$ is closed in $\ell_2$ norm topology, so the $\ell_2$ projection of a given sequence $b\in \ell_2$ onto the set $U_q$ is always well defined.
\begin{theorem}
\label{th:lp-proj}
Given $b\in \ell_2$, the minimizing pair for problem (\ref{l2-lp}) is $(x_{t},y_{t})=(b-\pi_{tU_q}(b),\pi_{tU_q}(b))$, where $\pi_{t U_q}(b)$ is the $\ell_2$ projection of $b$ onto the set $t U_q$.
\end{theorem}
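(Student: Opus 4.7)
The plan is to mirror the proof strategy of Theorem \ref{th:BV-proj}, replacing the role of $(z,Du_t)$ and the $\|\cdot\|_{Y_2}$-duality with the elementary Hölder duality between $\ell_p$ and $\ell_q$. Concretely, I want to show that $y_t$ from Theorem \ref{th:l2-lp} satisfies the variational characterization of the $\ell_2$-projection onto the closed convex set $tU_q$, namely
\[
\langle w - y_t,\, b - y_t\rangle_{\ell_2} \leq 0 \quad \text{for every } w \in tU_q.
\]
Since $tU_q$ is closed and convex in $\ell_2$, this inequality uniquely characterizes $\pi_{tU_q}(b)$, and the identity $x_t = b - y_t$ then gives the companion formula.

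First I would split on the two regimes provided by Theorem \ref{th:l2-lp}. In the regime $\|b\|_{\ell_q} \leq t$ one has $x_t = 0$, hence $y_t = b$, and the projection of $b$ onto $tU_q$ is trivially $b$ itself since $b \in tU_q$. In the regime $\|b\|_{\ell_q} \geq t$ one has both $\|y_t\|_{\ell_q} = t$ (so $y_t \in tU_q$) and the key extremality identity $y_t \cdot x_t = t\|x_t\|_{\ell_p}$. Using $b - y_t = x_t$, the variational inequality I need becomes $(w - y_t)\cdot x_t \leq 0$, i.e.\ $w\cdot x_t \leq y_t \cdot x_t$.

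The main (and in fact only real) step is to bound $w\cdot x_t$ by $t\|x_t\|_{\ell_p}$: by Hölder's inequality
\[
w \cdot x_t \;\leq\; \|w\|_{\ell_q}\,\|x_t\|_{\ell_p} \;\leq\; t\,\|x_t\|_{\ell_p} \;=\; y_t \cdot x_t,
\]
which is exactly the desired inequality. I do not anticipate any serious obstacle; the subtlety in the $(L_2,\BV)$ analogue came from needing Theorem \ref{th:Y_p} to produce the vector field $z$ realizing $\|v\|_{Y_2}$ and from handling the boundary pairing $[z,\nu]$, but here the corresponding machinery is just Hölder duality on $\ell_2$, which is available directly. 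After verifying both cases, I combine them to conclude $y_t = \pi_{tU_q}(b)$ and hence $x_t = b - \pi_{tU_q}(b)$.
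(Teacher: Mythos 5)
Your proposal is correct and follows essentially the same route as the paper's own proof: the same case split on $\|b\|_{\ell_q}$ versus $t$, the same use of $\|y_t\|_{\ell_q}=t$ and $y_t\cdot x_t=t\|x_t\|_{\ell_p}$ from Theorem \ref{th:l2-lp}, and the same H\"older estimate to verify the variational inequality $(w-y_t)\cdot(b-y_t)\le 0$ characterizing the projection onto the closed convex set $tU_q$. No changes needed.
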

\begin{proof}~\\
1. When $\|b\|_{\ell_q}\leq t$, we have $y_t=\pi_{t U_q}(b)=b\in t U_q$.\\
2. When $\|b\|_{\ell_q}\geq t$, by Theorem \ref{th:l2-lp}, we have $\|y_{t}\|_{\ell_q}=t$ and $y_{t}\cdot x_{t}=t\|x_t\|_{\ell_p}$. For any $z_t\in t U_q$, we have:
\[(z_t-y_t)\cdot (b-y_t)=z_t\cdot x_t-y_t\cdot x_t\leq \|z_t\|_{\ell_q}\|x_t\|_{\ell_p}-t\|x_t\|_{\ell_p}\leq 0.\]
So $y_t=\pi_{t U_q}(b)$. Consequently, $x_t=b-\pi_{t U_q}(b)$.
\end{proof}
\begin{remark}
This result paved a road for studying $(L_2, B^{\alpha}_{p}(L_{p}))$ type decompositions, where $B^{\alpha}_{p}(L_{p})$ is a Besov space. Since the norms of $L_2$ and Besov spaces can be characterized by their wavelet coefficients, the $(L_2,B^{\alpha}_{p}(L_{p}))$ type decompositions can be derived from the  decompositions for sequence spaces.
\end{remark}

\section{Numerical Implementation for $(X,Y)=(L_2(\Omega),\BV(\Omega))$}
The problem of finding a minimizing pair $(u_t,v_t)$ almost always is solved numerically.
Typically, numerical methods are built through some discretization of the continuous problem. In this chapter, we will study the numerical implementation for $(L_2(\Omega),\BV(\Omega))$ decomposition. Without loss of generality, we can assume $\Omega=[0,1]^d$. Let 
\[D_{n}:=\{2^{-n}([k_1,k_1+1]\times[k_2,k_2+1]\times \dotsi \times [k_d,k_d+1])~:~0\leq k_i \leq 2^n-1,~1\leq i \leq d\}\]
 be dyadic cubes with length $2^{-n}$.
We replace $f$ by a piecewise constant approximation of $f$ which has the following form:
\[f_n:=\sum_{I \in D_n}a_{I}\Chi_I.\]
Thus, $f_n$ is a function in the linear space
\[V_n:=\{f\in L_2(\Omega)~:~f\mbox{~is constant on~}I,~I \in D_n\}.\]

The spaces $\{V_n\}$, $n\ge 0$,  are nested, i.e., $V_{0}\subset \dotsi \subset V_{n}\subset V_{n+1} \subset \dotsi$. As our approximation, we will take  $f_n=P_{n}(f)$, where $P_{n}(f)$ is the $L_2$ projection of $f$ onto $V_n$.  The original problem (\ref{equ:L2-BV}) is then approximated by  the finite-dimensional problem
\begin{equation}
\label{num:1}
u_n:=\argmin_{u\in V_n}~\{\frac{1}{2}\|f_n-u\|^2_{L_2}+t|u|_{\BV}\}.
\end{equation}
We can compute $\|f_n-u_n\|_{L_2}$ and $|u_n|_{\BV}$ exactly by discrete norms. Problem (\ref{num:1}) then becomes a $\ell_1$-minimization problem:
\begin{equation}
\label{num:2}
x_{t}:=\argmin_{x}~\{\|b-x\|^{2}_{\ell_2}+t\|Mx\|_{\ell_1}\},
\end{equation}
where $M$ is an $m\times n$ matrix with the property $M(x+c)=Mx$ for any constant vector $c$.

Define 
\[K_{n}(f,t):=\inf_{u\in V_n} ~\{\frac{1}{2}\|f_n-u\|^2_{L_2}+t|u|_{\BV}\}.\] 

We have the following result about the convergence of the discrete solution to the continuous solution:
\begin{theorem}
Let $u_t$ be the minimizer of problem (\ref{equ:L2-BV}). We have:
\begin{enumerate}
\item $u_{n}\rightharpoonup u_t$ weakly in $L_2$.
\item $u_{n}\rightarrow u_t$ strongly in $L_{p}$, where $1\leq p < d/(d-1)$.
\item $\lim_{n\rightarrow \infty}K_{n}(f,t)=K(f,t)$.
\end{enumerate}
\end{theorem}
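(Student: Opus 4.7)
The plan is to treat this as a variational approximation (essentially $\Gamma$-convergence) problem, using that the dyadic spaces $V_n$ are nested with $\bigcup_n V_n$ dense in $L_2(\Omega)$ and that $f_n=P_n f\to f$ strongly in $L_2$. The three conclusions will then follow from uniform a priori bounds, a $\liminf$ inequality, a recovery sequence, and uniqueness of the continuous minimizer.

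First I would establish the uniform bounds. Taking $u=0\in V_n$ as a competitor gives
\[\tfrac{1}{2}\|f_n-u_n\|^2_{L_2}+t|u_n|_{\BV}\leq \tfrac{1}{2}\|f_n\|^2_{L_2}\leq \tfrac{1}{2}\|f\|^2_{L_2},\]
so $\{u_n\}$ is bounded in $L_2(\Omega)$ and $\{|u_n|_{\BV}\}$ is bounded. Thus every subsequence has a further subsequence $\{u_{n_k}\}$ with $u_{n_k}\rightharpoonup u^*$ weakly in $L_2$, and $u^*\in \BV(\Omega)\cap L_2(\Omega)$ by Lemma \ref{semi-contin}. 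For the recovery sequence, given $v\in\BV(\Omega)\cap L_2(\Omega)$, I set $v_n:=P_n v\in V_n$; then $v_n\to v$ in $L_2$ by density of $\bigcup_n V_n$, and one has the key bound $|v_n|_{\BV}\leq |v|_{\BV}$ (discussed in the last paragraph). Comparing $u_n$ with $v_n$ in the minimization,
\[\tfrac{1}{2}\|f_n-u_n\|^2_{L_2}+t|u_n|_{\BV}\leq \tfrac{1}{2}\|f_n-v_n\|^2_{L_2}+t|v_n|_{\BV},\]
and passing to $\liminf_{k\to\infty}$ on the left (using weak lower semi-continuity of the $L_2$ norm and of $J$ by Lemma \ref{semi-contin}) and to $\lim_{n\to\infty}$ on the right yields
\[\tfrac{1}{2}\|f-u^*\|^2_{L_2}+t|u^*|_{\BV}\leq \tfrac{1}{2}\|f-v\|^2_{L_2}+t|v|_{\BV}\]
for every $v\in\BV\cap L_2$. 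Hence $u^*$ is a minimizer of $T$; by uniqueness (Theorem \ref{min}) $u^*=u_t$, and the usual subsequence argument gives $u_n\rightharpoonup u_t$ weakly in $L_2$, which is (1).

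For (2), the uniform bound on $\{\|u_n\|_{L_1}+|u_n|_{\BV}\}$ places $\{u_n\}$ in a bounded subset of $\BV(\Omega)$; the compact embedding $\BV(\Omega)\hookrightarrow L_p(\Omega)$ for $1\leq p<d/(d-1)$ then produces a strongly $L_p$-convergent subsequence, whose limit must coincide with the weak $L_2$ limit $u_t$, and uniqueness again promotes this to strong $L_p$ convergence of the full sequence. For (3), specializing the recovery sequence to $v=u_t$ gives $\limsup_n K_n(f,t)\leq T(u_t)=K(f,t)$, while the liminf inequality from (1) (with $u^*=u_t$) gives $\liminf_n K_n(f,t)\geq K(f,t)$, so $K_n(f,t)\to K(f,t)$.

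The main obstacle is the non-expansiveness inequality $|P_n v|_{\BV}\leq |v|_{\BV}$ for the dyadic $L_2$ projection onto $V_n$. On each $(d-1)$-dimensional face between two adjacent cubes in $D_n$, the jump of $P_n v$ is a difference of cell-averages of $v$, which by Fubini is controlled by the integral of $|Dv|$ across the corresponding pair of slabs; summing the contributions over all faces and coordinate directions recovers $|v|_{\BV}$. If one prefers to avoid this direct calculation, one may smooth first: approximate $v\in \BV\cap L_2$ by $v^\epsilon\in C^\infty(\bar\Omega)\cap \BV$ with $v^\epsilon\to v$ in $L_2$ and $|v^\epsilon|_{\BV}\to |v|_{\BV}$ (classical strict approximation), form $P_n v^\epsilon$, and extract a diagonal subsequence; the weaker conclusion $\limsup_n |v_n|_{\BV}\leq |v|_{\BV}$ obtained this way is already enough for the argument above.
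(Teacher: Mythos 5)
Your overall architecture is exactly the paper's: a priori bounds from the competitor $u=0$, weak $L_2$ compactness of $\{u_n\}$, lower semi-continuity of the $L_2$ term and of $J$ along the weak limit, a recovery sequence in $V_n$ for the $\limsup$ inequality, identification of the limit with $u_t$ by uniqueness, and the compact embedding $\BV(\Omega)\hookrightarrow L_p(\Omega)$, $p<d/(d-1)$, for the strong convergence. The one place where you go beyond the paper --- which merely \emph{asserts} the existence of $w_n\in V_n$ with $w_n\to u_t$ in $L_2$ and $|w_n|_{\BV}\to |u_t|_{\BV}$ --- is your attempted justification of that recovery sequence, and that is precisely where the argument breaks for $d\geq 2$.

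The claimed key bound $|P_n v|_{\BV}\leq |v|_{\BV}$ is false in dimension $d\geq 2$. For a piecewise constant function on dyadic cubes the distributional gradient is concentrated on the faces and is normal to them, so $|P_nv|_{\BV}=\sum_F |[P_nv]_F|\,H^{d-1}(F)=\sum_{i=1}^d |D_i(P_nv)|(\Omega)$, i.e.\ the isotropic and the \emph{anisotropic} ($\ell^1$-type) total variations coincide on $V_n$. Your Fubini/slab computation controls each coordinate direction separately and therefore yields only $\sum_F|[P_nv]_F|H^{d-1}(F)\leq \sum_{i=1}^d|D_iv|(\Omega)$, and $\sum_i|D_iv|(\Omega)$ exceeds $|v|_{\BV}=\int_\Omega\bigl(\sum_i|D_iv|^2\bigr)^{1/2}$ by up to a factor $\sqrt d$ whenever $Dv$ is not axis-aligned. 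Concretely, for $v(x)=x_1+x_2$ on $[0,1]^2$ one has $|v|_{\BV}=\sqrt2$ while $|P_nv|_{\BV}\to 2$; the same happens for $v=\Chi_{\{x_1<x_2\}}$ (staircase effect). Smoothing first does not repair this, since the obstruction is the anisotropy of the cubic grid, not the roughness of $v$: for smooth $v$ one still gets $|P_nv|_{\BV}\to\sum_i\int_\Omega|\partial_i v|\,dx$. Worse, because $v\mapsto\sum_i|D_iv|(\Omega)$ is itself lower semi-continuous under $L_1$ convergence and agrees with $|\cdot|_{\BV}$ on every $V_n$, \emph{any} sequence $w_n\in V_n$ with $w_n\to v$ in $L_1$ satisfies $\liminf_n|w_n|_{\BV}\geq\sum_i|D_iv|(\Omega)>|v|_{\BV}$ for such $v$, so no recovery sequence exists at all; the $\limsup$ inequality (and with it assertion 3) genuinely requires either interpreting $|\cdot|_{\BV}$ anisotropically or enriching the discrete spaces beyond piecewise constants on axis-parallel cubes. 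This gap is present in the paper's own proof as well, but your proposal makes it explicit and the proposed fix does not close it. (In $d=1$ your tent-function computation is correct and $|P_nv|_{\BV}\leq|v|_{\BV}$ does hold, so the argument is complete there.)
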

\begin{proof}
Since $\frac{1}{2}\|f_n-u_n\|^2_{L_2}\leq \frac{1}{2}\|f_n-u_n\|^2_{L_2}+t|u_n|_{\BV}\leq\frac{1}{2}\|f_n\|^2_{L_2}$, we have $\|u_n\|_{L_2}\leq \|f_n\|_{L_2}$. Similarly, we can get $|u_n|_{\BV}\leq \frac{1}{2t}\|f_n\|^2_{L_2}$. Since $f_n=P_n(f)\rightarrow f$ in $L_2$, $\{\|u_n\|_{L_2}\}$ and $\{|u_n|_{\BV}\}$ are bounded sequences. So we can find a subsequence $\{u_{n_k}\}$ such that:
\begin{enumerate}
\item $u_{n_k}\rightharpoonup \hat{u}$ weakly in $L_2$.
\item $u_{n_k}\rightarrow \hat{u}$ strongly in $L_{p}$ for $1\leq p<d/(d-1)$. (by the compact embedding $\BV(\Omega)\hookrightarrow L_p(\Omega)$)
\item $|u_{n_k}|_{\BV}\rightarrow |\hat{u}|_{\BV}$.
\end{enumerate}
for some $\hat{u}\in \BV(\Omega)\cap L_2(\Omega)$.
Then we have:
\begin{equation}
\label{num:3}
K(f,t)=\frac{1}{2}\|f-u_t\|^2_{L_2}+t|u_t|_{\BV}\leq \frac{1}{2}\|f-\hat{u}\|^2_{L_2}+t|\hat{u}|_{\BV}\leq \liminf_{k\rightarrow \infty}K_{n_k}(f,t).
\end{equation}
Given the minimizer $u_t\in \BV(\Omega)\cap L_2(\Omega)$, we can find a sequence $\{w_n\}$ with $w_n\in V_n$ such that:
\[w_n\rightarrow u_t~{\rm in}~L_2~~~{\rm and}~~~|w_n|_{\BV}\rightarrow |u_t|_{\BV}.\]
Since $K_n(f,t)=\frac{1}{2}\|f_n-u_n\|^2_{L_2}+|u_n|_{\BV}\leq \frac{1}{2}\|f_n-w_n\|^2_{L_2}+|w_n|_{\BV}$, we have:
\begin{equation}
\label{num:4}
\limsup_{n\rightarrow \infty}K_n(f,t)\leq \limsup_{n\rightarrow \infty}~\{\frac{1}{2}\|f_n-w_n\|^2_{L_2}+|w_n|_{\BV}\}=K(f,t).
\end{equation}
Combining (\ref{num:3}) and (\ref{num:4}), we know that $\hat{u}$ is a minimizer for problem (\ref{equ:L2-BV}) and 
\begin{equation}
\label{num:5}
\limsup_{n\rightarrow \infty}K_n(f,t)=\liminf_{k\rightarrow \infty}K_{n_k}(f,t)=K(f,t).
\end{equation}
By the uniqueness of the minimizer, we have $\hat{u}=u_t$. Since we can extract a further subsequence $\{u_{n_k}\}$, which satisfies (\ref{num:5}) and has the limit $u_t$, from any subsequence of $\{u_n\}$, we have:
\begin{enumerate}
\item $u_{n}\rightharpoonup u_t$ weakly in $L_2$.
\item $u_{n}\rightarrow u_t$ strongly in $L_{p}$, where $1\leq p < d/(d-1)$.
\item $\lim_{n\rightarrow \infty}K_{n}(f,t)=K(f,t)$.
\end{enumerate}
\end{proof}

\end{document}